\documentclass{amsart}

\usepackage[all]{xy}    
\usepackage{graphicx}
\usepackage{amssymb}
\usepackage{mathrsfs}
\usepackage{multirow}
\usepackage{float}
\usepackage{tikz-cd}
\usepackage{adjustbox}
\usepackage{amsthm}
\usepackage{accents}
\usepackage{mathtools}
\usepackage{mathptmx,enumitem,cite,array}
\usepackage[new]{old-arrows}
\usepackage{tikz}
\usetikzlibrary{matrix,arrows}
\usepackage{amsmath}

\newcommand{\ubar}[1]{\underaccent{\bar}{#1}}

\usepackage[numbers,sort&compress]{natbib} 
\usepackage[bookmarksnumbered, bookmarksopen,
colorlinks,citecolor=blue,linkcolor=blue,backref]{hyperref}

\newcounter{RomanNumber}
\newcommand{\MyRoman}[1]{\setcounter{RomanNumber}{#1}\Roman{RomanNumber}}

\newtheorem{theorem}{Theorem}
\newtheorem{lemma}[theorem]{Lemma}

\theoremstyle{definition}
\newtheorem{definition}[theorem]{Definition}
\newtheorem{example}[theorem]{Example}

\newtheorem{corollary}[theorem]{Corollary}

\newtheorem{remark}[theorem]{Remark}

\theoremstyle{notation}



\newcommand{\pullbackcorner}[1][dr]{\save*!/#1-1.2pc/#1:(1,-1)@^{|-}\restore}
\newcommand{\pushoutcorner}[1][dr]{\save*!/#1+1.2pc/#1:(-1,1)@^{|-}\restore}
\newcommand{\be}{\begin{equation}}
\newcommand{\ee}{\end{equation}}

\newcommand{\CC}{\mathbb{C}}

\newcommand{\R}{\mathbb{R}}

\newcommand{\Z}{\mathbb{Z}}

\usepackage[utf8]{inputenc}
\usepackage{chngcntr}
\usepackage{apptools}
\AtAppendix{\counterwithin{theorem}{section}}
\usepackage[toc,page]{appendix}


\begin{document}

\keywords{}

\title
{String$^c$ Structures and Modular InvariantsString$^c$ Structures and Modular Invariants}

 \author{Ruizhi Huang}
\address{Institute of Mathematics and Systems Sciences, Chinese Academy of Sciences, Beijing 100190, China}
\email{huangrz@amss.ac.cn}  
\urladdr{https://sites.google.com/site/hrzsea \\ 
https://hrzsea.github.io/Huang-Ruizhi/}


\author{Fei Han}
\address{Department of Mathematics,
National University of Singapore, Singapore 119076}
\email{mathanf@nus.edu.sg}

\author{Haibao Duan}
\address{Institute of Mathematics and Systems Sciences, Chinese Academy of Sciences, Beijing 100190, China}
\email{dhb@math.ac.cn}

\date{}

\maketitle

\begin{abstract}
In this paper, we study some algebraic topology aspects of  String$^c$ structures, more precisely, from the perspective of Whitehead tower and the perspective of the loop group of $Spin^c(n)$. We also extend the generalized Witten genera constructed for the first time in \cite{CHZ11} to correspond to String$^c$ structures of various levels and give vanishing results for them. 
\end{abstract}

\tableofcontents


\section{Introduction}

\subsection{Background} \noindent Let $V$ be a real rank $n$ oriented vector bundle over a connected manifold $M$. Let $F_{SO(n)}$ be the oriented orthonormal frame bundle of $V$ over $M$. $V$ is called {\em Spin} if $F_{SO(n)}$ has an equivariant lift with respect to the double covering $\rho:Spin(n)\rightarrow SO(n).$ A Spin structure is a pair $(P, f_P)$ with $\pi_P: P\to M$ being a principal $Spin(n)$-bundle over $M$ and $f_P: P\to F_{SO(n)}$ being an equivariant 2-fold covering map. 
$P$ is called the {\em bundle of Spin frames} of $V$. The topological obstruction to the existence of Spin structure is the second Stiefel-Whitney $\omega_2(V)$. Furthermore, if it vanishes then the distinct Spin structures lifting the prescribed oriented structure on $V$ are in one-to-one correspondence with the elements in $H^1(M; \Z/2)$ (see \cite{Lawson89}). 

A {\em String structure} is a higher version of Spin structure, which is related to quantum anomaly in physics \cite{K87}. One mathematical way to look at String structures is from the perspective of Whitehead tower. A String group is an infinite-dimensional group $String(n)$ introduced by Stolz \cite{St96} as a 3-connected cover of $Spin(n)$. Let $V$ be a vector bundle with the Spin structure $(P, f_P)$. Let $g: M\to BSpin(n)$ be the classifying map of $P$. $V$ is called {\em (strong) String}, if there is a lift,
\begin{gather*}
\begin{aligned}
\xymatrix{
& BString(n) \ar[d]  \\
M \ar[r]_{ g  \ \ \ \ \ \ } \ar@{-->}[ru]  & BSpin(n).
}
\end{aligned}
\label{stringliftdiagintro}
\end{gather*}
The obstruction to the lift is $\frac{1}{2}p_1(V)$, and if it vanishes then the distinct String structures lifting the prescribed Spin structure on $V$ are in one-to-one correspondence with the elements in $H^3(M; \Z)$. 

Another way to look at String structure is from the perspective of free loop space $LM$, namely by looking at lifting of the structure group of the looped Spin frame bundle from the loop group to its universal central extension \cite{Mclau92}. Under this point of view, the obstruction to the existence of \textit{the (weak) String structure} is the transgression of $\frac{1}{2}p_1(V)$, and if it vanishes then the distinct String structures lifting the prescribed Spin structure on $V$ are in one-to-one correspondence with the elements of $H^2(LM; \Z)$. These two approaches to look at String structures are equivalent when $M$ is $2$-connected. In general strong String is strictly stronger than weak String.

More geometrically, Stolz and Teichner gave the profound link of the String structure on $M$ to the fusive Spin structure on $LM$ \cite{ST05}. This was further developed by Waldorf \cite{Wal15, Wal16} and Kottke-Melrose \cite{KM13}. In \cite{Bun}, Bunke studied the Pfaffian line bundle of a certain family of real Dirac operators and showed that String structures give rise to trivialisations of that Pfaffian line bundle. See also the study of String structures from the differential and the twisted point of view \cite{Re, Sati}.

Let $M$ be a $4m$ dimensional compact oriented smooth manifold. Let $\{\pm
2\pi \sqrt{-1}z_{j},1\leq j\leq 2m\}$ denote the formal Chern roots of $T_{%
\mathbb{C}}M $, the complexification of the tangent vector bundle $TM$ of $M$%
. Then the famous Witten genus of $M$ can be written as (cf. \cite{Liu96})%
\begin{equation*}
W(M)=\left\langle \left( \prod_{j=1}^{2m}z_{j}\frac{\theta ^{\prime }(0,\tau
)}{\theta (z_{j},\tau )}\right) ,[M]\right\rangle \in \mathbb{Q}[[q]],
\end{equation*}%
with $\tau \in \mathbb{H}$, the upper half-plane, and $q=e^{\pi \sqrt{-1}%
\tau }$. The Witten genus was first introduced in \cite{W} and can be viewed as the
loop space analogue of the $\widehat{A} $-genus. It can be expressed as a $q$%
-deformed $\widehat{A}$-genus as
\begin{equation*}
W(M)=\left\langle \widehat{A}(TM)\mathrm{ch}\left( \Theta \left( T_{\mathbb{C%
}}M\right) \right) ,[M]\right\rangle ,
\end{equation*}%
where
\begin{equation*}
\Theta (T_{\mathbb{C}}M)=\overset{\infty }{\underset{n=1}{\otimes }}%
S_{q^{2n}}(\widetilde{T_{\mathbb{C}}M}),\ \ {\rm with}\ \
\widetilde{T_{\mathbb{C}}M}=T_{\mathbb{C}}M-{\mathbb C}^{4m},
\end{equation*}%
is the Witten bundle defined in \cite{W}. When the manifold $M$ is Spin, according to the
Atiyah-Singer index theorem \cite{AS},  the Witten genus can be expressed analytically as the index of twisted Dirac operators, $W(M)=\mathrm{ind}(D\otimes \Theta \left( T_{\mathbb{C%
}}M\right))\in \mathbb{Z}[[q]]$, where $D$ is the Atiyah-Singer Spin Dirac operator on $M$ (cf. \cite{HBJ}). Moreover, if $M$ is String,  i.e. $\frac{1}{2}p_{1}(TM)=0$, or even weaker, if $M$ is Spin and the
first rational Pontryagin class of $M$ vanishes, then $W(M)$ is a
modular form of weight $2k$ over $SL(2,\mathbb{Z})$ with integral
Fourier expansion (\cite{Za}). The homotopy theoretical
refinements of the Witten genus on String manifolds leads to the
theory of tmf ({\em topological modular form}) developed by Hopkins and
Miller \cite {Hop}. The String condition is the orientablity condition for this generalized cohomology theory. 

As one of the important applications, the Witten genus can be used as obstruction to continuous symmetry on manifolds. In \cite{Liu95}, Liu discovered a profound vanishing theorem for the Witten genus under the anomaly condition that $p_1(M)_{S^1}=n\cdot \pi^* u^2$, where
$p_1(M)_{S^1}$ is the equivariant first Pontrjagin class, $\pi:
M\times_{S^1}ES^1\rightarrow BS^1$ is the projection from the Borel space to the classifying space and $u\in
H^2(BS^1,\mathbb{Z})$ is a generator and $n$ is an integer. Dessai \cite{Des94} showed that when the $S^1$-action is induced from an $S^3$-action and the manifold is String, this anomaly condition holds. Liu's vanishing theorem has been generalized in \cite{LM1, LM2, LMZ1, LMZ2, LY} for the family case, in \cite{LMZ3} for the foliation case and recently in \cite{HM18} for proper actions of non-compact Lie groups on non-compact manifolds.

Spin$^c$ structure is the complex analogue of Spin structure.
It is known that there exists a Spin structure on a vector bundle $V$ over $M$ if and only if its second Stiefel-Whitney class $w_2(V)=0$. In contrast, if $w_2(V)$ is only assumed to be trivial after applying the Bockstein, or equivalently, $w_2(V)$ is the ${\rm mod}~2$ reduction of the first Chern class $c_1(\xi)$ of some complex line bundle $\xi$ over $M$, then the product $F_{SO(n)}\times S(\xi)$ of the frame bundle and the circle bundle of $\xi$ admits an equivariant double covering $P_{Spin^c}(V)$ with the structural group $Spin^c(n)$. By definition, this specifies a $\mathit{Spin^c}$-\textit{structure} on $V$ associated to $\xi$, and we may often refer to the Spin$^c$ bundle $V$ as the pair $(V, \xi)$, or more explicitly the triple $(V,\xi, c_1(\xi))$. 
Furthermore, if such Spin$^c$ structure exists on $V$, 
then the distinct Spin$^c$ structures, with the determinant line bundle $\xi$, lifting the prescribed oriented structure on $V$ are in one-to-one correspondence with the elements in $H^1(M; \Z/2)$.
An excellent introduction to the structural and index theoretical aspects of Spin$^c$ structures can be found in Appendix D of the famous book of Lawson-Michelsohn \cite{Lawson89}.

In this paper, we study String$^c$ structures, which can be viewed as higher versions of Spin$^c$ structures and the complex analogue of String structures. There are interesting investigations of generalised String structures in the literature. For instance, in \cite{CHZ11} Chen-Han-Zhang studied two particular String$^c$ structures from geometric point of view, while in \cite{SSS12} Sati-Schreiber-Stasheff studied twisted String structures with physical applications.
Here, we study String$^c$ structures from the perspectives of algebraic topology, including their definitions and geometric explanations, the explicit construction of String$^c$ groups as well as the obstructions to and classification of String$^c$ structures. Parallel to the Witten genus for String manifolds, we also construct generalized Witten genera corresponding to String$^c$ conditions of various levels and obtain their vanishing theorem under the String$^c$ conditions and  give some applications.

\subsection{String$^c$ structures} As in the String case, String$^c$ structures can be understood from the perspective of the Whitehead tower. This is studied in Section \ref{strongstringcsec}. We find that one of the significant differences for this complex situation is that there are infinitely many levels of String$^c$-structures indexed by the infinite cyclic group $\mathbb{Z}$. Indeed, by analysing twisted embeddings of Spin$^c$ groups into large Spin groups, we can define a $\mathbb{Z}$-family of topological groups $String^c_k(n)$. In particular, when $k<0$, $String^c_k(n)$ is a group extension of $Spin^c(n)$ by a suitable group model of $K(\mathbb{Z},2)$ (Section \ref{strongstringcsec}). In the famous paper \cite{ST04} of Stolz-Teichner, they showed a model of $String$ as a group extension of $Spin$ by a projective unitary group $PU(A)$ of a von Neumann algebra $A$, a model of $K(\mathbb{Z}, 2)$. For our String$^c$ groups of level $k<0$, we have the extensions of topological groups
\begin{equation}\label{stringcext1eqintro}
\{1\}\rightarrow PU(A)\rightarrow String^c_k(n) \rightarrow Spin^c(n)\rightarrow \{1\}.
\end{equation}
Indeed every model of String group can induce a group model of String$^c$ group of negative level. In contrast, when $k\geq 0$, the topological group $String^c_k(n)$ can be only defined as a \textit{homotopy group extension} of $Spin^c(n)$, in the sense that, there exists a homotopy fibration
\begin{equation}\label{stringcext2eqintro}
K(\mathbb{Z}, 3)\simeq BK(\mathbb{Z}, 2)\rightarrow BString^c_k(n) \rightarrow BSpin^c(n).
\end{equation}
Notice that when $k<0$, (\ref{stringcext2eqintro}) can obtained by simply applying the classifying functor $B$ to the group extension (\ref{stringcext1eqintro}).

Then we use the classifying spaces $BString^c_k(n)$ to define String$^c$-structures. We call a Spin$^c$ bundle $(V,\xi)$ \textit{strong String$^c$ of level} $\mathit{2k+1}$ for some $k\in \mathbb{Z}$, if there is a lift  
\begin{gather*}
\begin{aligned}
\xymatrix{
& BString^c_k(n) \ar[d]  \\
M \ar[r]_{ g^\prime  \ \ \ \ \ \ } \ar@{-->}[ru]  & BSpin^c(n),
}
\end{aligned}
\label{stringcliftdiagintro}
\end{gather*}
where $g^\prime$ is the classifying map of $P_{Spin^c}(V)$. We will show that under our construction of the String$_k^c$ group, the obstruction to the lift is 
\begin{equation*}\label{strongstringcclassintro}
q_1(V,\xi)-kc_1(\xi)^2=\frac{p_1(V)-(2k+1)c_1(\xi)^2}{2}\in H^4(M;\mathbb{Z}),
\end{equation*}
where $q_1(V,\xi)=\frac{p_1(V)-c_1^2(\xi)}{2}$ is known as the first Spin$^c$ class of $(V,\xi)$ (\cite{Duan18}; cf. \cite{Thomas62}).

Another significant difference for this complex situation is that the determinant line bundle $\xi$ of the underlying Spin$^c$ structure plays a prominent role. Actually, if this obstruction class vanishes then the stable Spin bundle $V \oplus \xi_\mathbb{R}^{\oplus (-2k-1)}$ is String and, moreover, we will show that the distinct String$^c$ structures on $(V,\xi)$ are in one-to-one correspondence with the elements in the image of 
\[
\rho^\ast: H^3(M;\mathbb{Z})\rightarrow H^3(S(\xi);\mathbb{Z}),
\]
where $\rho: S(\xi)\to M$ is the projection onto $M$ from the circle bundle $S(\xi)$ of $\xi_\mathbb{R}$, the underlying rank 2 real vector bundle of $\xi$. With mild restrictions, $\rho^\ast$ is surjective or injective and then the String$^c$-structures are classified by the third cohomology $H^3(S(\xi);\mathbb{Z})$ or $H^3(M;\mathbb{Z})$. 

The String$^c$-structures can be also understood from the perspective of free loop spaces. This is studied explicitly in Section \ref{weakstringcsec}.
Recall that there is the canonical fibration $p: LM\rightarrow M$ defined by $p(\lambda)=\lambda(1)$ for any loop $\lambda\in LM$. In particular, any characteristic class of $M$ can be pulled back to $LM$, and we may use same notations for them by abuse of notation.
With this in mind, we call a Spin$^c$ bundle $(V,\xi)$ \textit{weak String$^c$ of level} $\mathit{2k+1}$ for some $k\in \mathbb{Z}$, if the determinant obstruction class
\[
\mu_1(V,\xi)-(2k+1)s_1(L\xi)c_1(\xi)\in H^3(LM;\mathbb{Z})
\]
vanishes, where $\mu_1(V,\xi)$ and $s_1(L\xi)$ are transgressed from $q_1(V,\xi)=\frac{p_1(V)-c_1^2(\xi)}{2}$ and $c_1(\xi)$ respectively. The weak String$^c$ condition can be also understood in terms of liftings of structural groups of looped principal bundles. Actually, when $k<0$, a Spin$^c$ bundle $V$ is weak String$^c$ of level $2k+1$ if and only if the structural group $LSpin^c(n)$ of the loop principal bundle $LP_{Spin^c}(V)$ over $LM$ can be lifted to the group $L\widehat{Spin^c_k}(n)$, defined by the central extension of $LSpin^c(n)$ by $U(1)$ of level $k$
\begin{equation}\label{stringcextL1eqintro}
\{1\}\rightarrow U(1) \rightarrow L\widehat{Spin^c_k}(n) \rightarrow LSpin^c(n)\rightarrow \{1\},
\end{equation}
where the level here arises from the way of embedding $Spin^c(n)$ into large $Spin$ group. In contrast, when $k\geq 0$ we do not have such description of weak String$^c$ structures by lifting of structural groups as nice as (\ref{stringcextL1eqintro})
. Nevertheless, similar to (\ref{stringcext2eqintro}), under this situation we have a homotopy group extension of level $k$ 
\begin{equation}\label{stringcextL2eqintro}
K(\mathbb{Z}, 2)\simeq BU(1)\rightarrow BL\widehat{Spin^c_k}(n) \rightarrow BLSpin^c(n),
\end{equation}
for each $k\geq 0$. Moreover, for any $k\in \mathbb{Z}$, $V$ admits a weak String$^c$ structure of level $2k+1$ if and only if there is a lift
\begin{gather*}
\begin{aligned}
\xymatrix{
& BL\widehat{Spin^c_k}(n) \ar[d]  \\
LM \ar[r]_{ Lg^\prime  \ \ \ \ \ \ } \ar@{-->}[ru]  & BLSpin^c(n).
}
\end{aligned}
\label{stringcliftLdiagintro}
\end{gather*}

As in the case of String structures, this description via loop spaces in general is weaker than the one via classifying spaces $BSpin^c(n)$, though in nice cases they are equivalent. For the aspect of counting structures, the distinct weak String$^c$ structures on $(V,\xi)$ are in one-to-one correspondence with the elements in the image of 
\[
(L\rho)^\ast: H^2(LM;\mathbb{Z})\rightarrow H^2(LS(\xi);\mathbb{Z}),
\]
where $L\rho: LS(\xi)\to LM$ is the looping of $\rho$.
In good situation, $(L\rho)^\ast$ can be surjective and then the weak String$^c$-structures are classified by the second cohomology $H^2(LS(\xi);\mathbb{Z})$. In particular, we see the role of the determinant line bundle in the loop spaces approach as well. 

In Section \ref{strongweakrelationsec},  to compare the two notions of String$^c$ structures, we will show that the distinct strong String$^c$-structures in the non-loop world can be transgressed to their weak counterparts in the loop world via the transgression diagram
\begin{gather*}
\begin{aligned}
\xymatrix{
H^3(M) \ar[d]^{\nu}  \ar[r]^{\rho^\ast}   & H^3(S(\xi)) \ar[d]^{\nu}\\
H^2(LM) \ar[r]^{(L\rho)^\ast}                & H^2(LS(\xi)).
}
\end{aligned}
\label{countstrongweakstringcrelationdiaintro}
\end{gather*}
Nevertheless, there are possibly strictly weaker String$^c$-structures than the strong ones. 

Since the String$^c$ structures can be understood from the String structures of the vector bundle plus several copies of the determinant line bundle or its complement, the fusive aspect of String$^c$ structrues can be carried out by using the corresponding descriptions of String structures in terms of fusive structures. Indeed, there are notions of fusion (fusive loop) Spin$^c$ structures of various negative levels on the loop space $LM$ in the sense of Waldorf \cite{Wal16} or Kottke-Melrose \cite{KM13}. Furthermore the transgression of the structures discussed above factors through the enhanced transgression isomorphism of Kottke-Melrose \cite{KM15} (see Subsection \ref{fusionspincsubsec}). Hence, one may view the fusion (fusive loop) Spin$^c$ structures as weak String$^c$ structures with additional fusive conditions (see Subsection \ref{fusionspincsubsec} for details).

It is worthwhile to notice that in \cite{SSS12} Sati-Schreiber-Stasheff studied so called {\it twisted String structures} for Spin manifolds, while our String$^c$ manifolds are Spin$^c$. Moreover, we study String$^c$ and LSpin$^c$ groups, as well as their classifying spaces from the perspective of algebraic topology.  

\subsection{Generalized Witten genera and vanishing theorem} Parallel to the Witten genus for String manifolds, we can construct generalized Witten genera $W^c_{2k+1; \vec a,  \vec b}(M)$ for String$^c$ manifolds of level $2k+1>0$ indexed by two integral vectors $\vec a,  \vec b$ under the condition (\ref{cond4m}), or (\ref{cond4m+2}) depending on the dimension of $M$. Such kind of invariants were constructed in \cite{CHZ11} for the first time. In this paper, we enrich them to correspond to String$^c$ manifolds of various levels. It is worthwhile to note that $W^c_{2k+1; \vec a,  \vec b}(M)$ are more flexible due to the freedom of the double vector-valued indices. As application, we obtain Liu's type vanishing theorem for $W^c_{2k+1; \vec a,  \vec b}(M)$ as follows. 

In the following, we always assume $G$ is a simply connected compact Lie group. If $M$ is level $2k+1$ String$^c$, then rationally $p_1(M)-(2k+1)c_1(\xi)^2=0$. Suppose $G$ acts smoothly on $M$ and the action lifts to the Spin$^c$ structure (and therefore has a lift to the determinant line bundle $\xi$). Since $G$ is simply connected, for $G$-equivariant characteristic classes, we must have 
\begin{equation}\label{positivedefintro}
 p_1(M)_G-(2k+1)c_1(\xi)^2_G=\alpha\cdot \pi^*q,
\end{equation}
for some $\alpha\in \Z$, where $\pi: M\times_{G}EG\to BG$ is the projection of the Borel fibre bundle, and $q\in H^4(BG)$ is the canonical generator corresponding to the generator $u^2\in H^4(BS^1)$ (see Section \ref{genwittengenussec} for details). 
We call {\em the $G$-action positive} on the level $2k+1$ String$^c$ manifold $M$ if $\alpha>0$.

The following is a motivating example for the positivity. 

\begin{example} On $\CC P^{2n}$, consider the stable almost complex structure $J$ such that 
\[
T\CC P^{2n}\oplus \R^2 \cong \mathcal{O}(1)\oplus \cdots \oplus \mathcal{O}(1) \oplus  \mathcal{O}(-1) \oplus   \cdots\oplus  \mathcal{O}(-1),
\]
where $ \mathcal{O}(1)$ is the canonical line bundle and $ \mathcal{O}(-1)$ is its dual and in the above decomposition, there are $n+1$ many $ \mathcal{O}(1)$ and $n$ many $\mathcal{O}(-1)$. It is clear that the determinant line bundle of the Spin$^c$ structure induced by $J$ is $\mathcal{O}(1)$ and $c_1(\CC P^{2n}, J)=x\in H^2(\mathbb{C}P^{2n};\mathbb{Z})$, the generator. As $p_1(\CC P^{2n})-(2n+1)c_1(\CC P^{2n}, J)^2=0$, we see that it is level $2n+1$ String$^c$. 

The linear action of $SU(2n+1)$ on $\CC P^{2n}$ obviously preserves $J$. Since $SU(2n+1)$ is simply connected, for $SU(2n+1)$-equivariant characteristic classes, we have 
\begin{equation}\label{cancp6eq1}
p_1(\CC P^{2n})_{SU(2n+1)}-(2n+1)c_1(\mathcal{O}(1))^2_{SU(2n+1)}=\alpha\cdot \pi^*q,
\end{equation}
where $q\in H^4(BSU(2n+1))$ is the generator corresponding to the generator $u^2\in H^4(BS^1)$. We claim that $\alpha=1$ and in particular the $SU(2n+1)$-linear action is positive.   

Actually consider the embedding of the circle group $S^1$ into $SU(2n+1)$ defined by
\begin{eqnarray*} f: S^1\to SU(2n+1), \ \ \ \ \lambda\to \left[\begin{array}{ccc}\lambda^{a_0}& &\\
                                     &\ddots &\\
                                     & & \lambda^{a_{2n}}
                                     \end{array}\right],
\end{eqnarray*} 
such that $a_0, a_1, \cdots a_{2n}\in \Z$  are distinct. Clearly $\sum_{i=0}^{2n} a_i=0$. Then $S^1$ acts on $\CC P^{2n}$ through the linear action of $SU(2n+1)$ in the following manner 
\[
 \lambda [z_0, z_1, \cdots, z_{2n}]=[\lambda^{a_0}z_0,  \lambda^{a_1}z_1, \cdots, \lambda^{a_{2n}}z_{2n}].
 \]
Since $a_i$'s are distinct from each other, we see that this action has $2n+1$ fixed points 
\[
[1, 0, \cdots, 0], [0,1, 0, \cdots, 0], \cdots, [0,0, \cdots, 1].
\]
The tangent space of the first fixed point has weights $a_1-a_0, a_2-a_0, \cdots, a_{2n}-a_0$ and $\mathcal{O}(1)$ restricted at the first fixed point has weight $a_0$. Then when restricted at the first fixed point, we have
\be
\begin{split}
&p_1(\CC P^{2n})_{S^1}-(2n+1)c_1(\mathcal{O}(1))^2_{S^1}\\
=&[(a_1-a_0)^2+(a_2-a_0)^2+\cdots+(a_{2n}-a_0)^2-(2n+1)a_0^2] \pi^\ast u^2\\
=&\left(\sum_{i=0}^{2n} a_i^2\right) \pi^\ast u^2.
\end{split}
\ee
Indeed, similar computations imply that when restricted to any fixed point we always have 
\begin{equation}\label{cancp6eq2}
p_1(\CC P^{2n})_{S^1}-(2n+1)c_1(\mathcal{O}(1))^2_{S^1}=\left(\sum_{i=0}^{2n} a_i^2\right)\pi^\ast u^2.
\end{equation}
However it is also clear that for $Bf: BS^1\to BSU(2n+1)$, we have $(Bf)^*(q)=\left(\sum_{i=0}^{2n} a_i^2\right)u^2$ and then by the naturality of equivariant characteristic classes and (\ref{cancp6eq1})
\begin{equation}\label{cancp6eq3}
p_1(\CC P^{2n})_{S^1}-(2n+1)c_1(\mathcal{O}(1))^2_{S^1}=\left(\sum_{i=0}^{2n} a_i^2\right)\cdot \alpha\cdot\pi^\ast u^2.
\end{equation}
Hence from (\ref{cancp6eq2}) and (\ref{cancp6eq3}), we see that $\alpha=1>0$. So this action is positive.
\end{example}
\begin{remark}\label{pin2}
(i) In \cite{Des99}, it has been shown that if $G$ contains a $Pin(2)$ and the induced action of $Pin(2)$ from that of $G$ on $M$ has a fixed point, then the $G$-action must be positive. This is based on the fact if $\eta $ is a $Pin(2)$-equivariant line bundle, then the restriction of the $Pin(2)$-representation to $S^1$ is trivial on the fibre of $\eta$ over the $Pin(2)$-fixed points. \newline
(ii) In our example, the $S^1$ action on $\mathcal{O}(1)$ has nonzero weights on each fixed point. However in \cite{Des99}, it has been shown that $Pin(2)$ actions on $\CC P^{2n}$ must have fixed points. This shows that the circle $Im(f)$ is not contained in any $Pin(2)$ subgroup of $SU(2n+1)$. Actually, the positivity seen from this circle action does not come from a $Pin(2)$ action. \end{remark}

We have the following Liu's type vanishing theorem.
\begin{theorem}[\protect Theorem \ref{Thm:action}]\label{Main}
Let $M$ be a connected compact level $2k+1$ String$^c$ manifold with $2k+1>0$. If $M$ admits an effective action of $G$ that lifts to the underlying Spin$^c$ structure and is positive, then $W^c_{2k+1; \vec a,  \vec b}(M)=0$ for all the  $(\vec a,  \vec b)$ satisfying condition \ref{cond4m} or \ref{cond4m+2}.
\end{theorem}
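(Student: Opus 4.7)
The plan follows Liu's strategy of proving vanishing via modular invariance combined with equivariant localization, adapted to the Spin$^c$ setting as initiated in \cite{CHZ11}, but now for the full $\mathbb{Z}$-family of levels. First, I would realize $W^c_{2k+1;\vec a,\vec b}(M)$ analytically as the index of a twisted Spin$^c$ Dirac operator. Since the $G$-action lifts to the Spin$^c$ structure and hence to the determinant line bundle $\xi$, one obtains a $G$-equivariant Spin$^c$ Dirac operator $D^\xi$, and the cohomological expression defining $W^c_{2k+1;\vec a,\vec b}$ becomes $\mathrm{ind}_G(D^\xi\otimes R_{\vec a,\vec b}(q))$ for a suitable virtual bundle $R_{\vec a,\vec b}(q)$ built from symmetric and exterior powers of $T_\mathbb{C}M$, $\widetilde{T_\mathbb{C}M}$, $\xi$ and its complement (and governed by the arithmetic conditions \ref{cond4m} or \ref{cond4m+2}). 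Because a class in $R(G)\otimes \mathbb{Q}[[q]]$ is zero iff its restriction to every circle subgroup vanishes, it suffices to prove the $S^1$-equivariant index vanishes identically for every circle $S^1\hookrightarrow G$, and the positivity (\ref{positivedefintro}) is preserved under each such restriction (with $\alpha$ replaced by a positive integer multiple).

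Next, I would package the $S^1$-equivariant index into a meromorphic two-variable function $\mathcal{W}(\tau,z)$ on $\mathbb{H}\times\mathbb{C}$ by replacing Chern roots and the $S^1$-weight by $z$ in the Jacobi theta function formulas for $\widehat{A}$, $\mathrm{ch}(R_{\vec a,\vec b}(q))$, and the Spin$^c$ twisting. The Atiyah--Bott--Segal--Singer fixed-point formula expresses $\mathcal{W}(\tau,z)$ as a sum of local contributions on the $S^1$-fixed submanifolds; each contribution is meromorphic in $z$ with poles controlled by the normal weights, and the full sum is entire in $z$ (no poles survive, essentially because it globally equals an equivariant index). The critical step is to check, via the transformation laws of $\theta_i(z,\tau)$ under the generators of $\Gamma_0(2)$, that $\mathcal{W}(\tau,z)$ is a Jacobi form of a definite weight and of index precisely $(2k+1)\alpha/2$, where the $\xi$-anomaly and the $TM$-anomaly combine through (\ref{positivedefintro}) so that all automorphy exponentials collapse to a single factor $e^{2\pi i \cdot\frac{(2k+1)\alpha}{2}\cdot\frac{cz^2}{c\tau+d}}$; the conditions \ref{cond4m} or \ref{cond4m+2} are what guarantee this collapse at the level of integral/parity constraints on $\vec a,\vec b$.

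Having identified $\mathcal{W}(\tau,z)$ as a Jacobi form of positive index (since $\alpha>0$ and $2k+1>0$) and bounded weight, I would then invoke the standard rigidity mechanism: by tracking $q$-expansions at the two cusps of $\Gamma_0(2)$ via the fixed-point formula, one shows $\mathcal{W}(\tau,z)$ is a \emph{holomorphic} Jacobi form and extracts that its $z=0$ value, which is the sought $W^c_{2k+1;\vec a,\vec b}(M)$, is a modular form on $\Gamma_0(2)$ of fixed weight. Liu's argument then concludes: a holomorphic Jacobi form of strictly positive index whose constant-in-$z$ part has degree bounded by the weight must vanish identically, so $\mathcal{W}(\tau,z)\equiv 0$, and in particular $W^c_{2k+1;\vec a,\vec b}(M)=0$.

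The main obstacle is the modular-invariance bookkeeping in Step~2: one must verify that the equivariant anomaly (\ref{positivedefintro}), together with the specific integrality/parity constraints in \ref{cond4m} and \ref{cond4m+2} on the indexing vectors $\vec a,\vec b$, exactly cancels the fractional exponential factors arising when pushing $\widehat{A}(TM)\cdot\mathrm{ch}(R_{\vec a,\vec b}(q))\cdot e^{c_1(\xi)/2}$ through an element of $\Gamma_0(2)$, leaving the clean Jacobi form of index $(2k+1)\alpha/2$. Once this computation is in hand, the strict positivity $\alpha>0$ is what rules out any nonzero candidate, yielding the vanishing.
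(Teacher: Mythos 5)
Your overall strategy coincides with the paper's: reduce to a nontrivial circle action whose equivariant anomaly $p_1(TM)_{S^1}-(2k+1)c_1(\xi)^2_{S^1}$ is a positive multiple of $\pi^*u^2$, express the equivariant genus via the Atiyah--Bott--Segal--Singer fixed point formula in terms of theta functions, and invoke Liu's modular-invariance argument. (The paper is more explicit about two points you gloss over: it produces the circle by citing Melvin--Parker to find $SU(2)$ or $SO(3)$ inside the simply connected $G$, uses effectiveness to see the induced $S^1$-action is nontrivial, handles the fixed-point-free case separately via the Lefschetz formula, and restricts the anomaly from $G$ to $S^1$ by comparing the two Borel fibrations through the dual Blakers--Massey exact sequence.)

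However, your final step contains a genuine error. You assert that ``a holomorphic Jacobi form of strictly positive index whose constant-in-$z$ part has degree bounded by the weight must vanish identically.'' This is false: the Jacobi theta functions themselves are nonzero holomorphic Jacobi forms of positive index, and a holomorphic Jacobi form of index $m>0$ has exactly $2m$ zeros in a fundamental parallelogram, which is perfectly consistent with being nonzero. The correct statement, and the one on which Liu's and Dessai's vanishing theorems rest, is that a holomorphic Jacobi form of \emph{negative} index must vanish identically (otherwise it would need a negative number of zeros minus poles in a period parallelogram). The whole point of the positivity hypothesis $\alpha>0$ is that, with the weights of the normal bundle appearing in the \emph{denominators} of the fixed-point contributions, the anomaly coefficient enters the index of the resulting Jacobi form with a minus sign, so positivity of the action forces the index to be strictly negative and the form to vanish. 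As written, your mechanism has the sign backwards and would not conclude; you need to track the sign of the index through the $z\mapsto z+\lambda\tau+\mu$ transformation of the localized expression (where the $1/\theta(x_j+m_jz,\tau)$ factors contribute $e^{+\pi\sqrt{-1}(\sum_j m_j^2)\tau+\cdots}$) and then apply the negative-index vanishing lemma.
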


Theorem \ref{Main} can be applied to stable almost complex manifolds. Suppose $(M, J)$ is a compact stable almost complex manifold. Then $M$ has a canonical Spin$^c$ structure determined by $J$. If $G$ acts smoothly on $M$ and preserves the stable almost complex structure $J$, then the action of $G$ can be lifted to the Spin$^c$ structure and the determinant line bundle $\xi$. Applying the above vanishing theorem, we have
\begin{theorem}\label{Mainstableacs} 
Let $(M, J)$ be a connected compact stable almost complex manifold, which is level $2k+1$ String$^c$, i.e., $p_1(TM)=(2k+1)c_1^2(J)$ and suppose $2k+1>0$. If $M$ admits a positive effective action of $G$ preserving $J$, then $W^{c}_{2k+1; \vec a,  \vec b}(M)=0$  for all the  $(\vec a,  \vec b)$ satisfying condition \ref{cond4m} or \ref{cond4m+2}.
\end{theorem}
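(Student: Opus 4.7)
The plan is to deduce Theorem \ref{Mainstableacs} as a direct consequence of Theorem \ref{Main} by verifying that every hypothesis of the latter is satisfied in the stable almost complex setting. The work reduces to three checks: the existence of a canonical Spin$^c$ structure coming from $J$, the identification of the level $2k+1$ String$^c$ condition in this canonical setup, and the canonical lift of the $G$-action.

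First I would recall the classical construction (cf.\ Lawson--Michelsohn, Appendix D) showing that a stable almost complex structure $J$ on $M$ produces a canonical Spin$^c$ structure on the stabilized tangent bundle $TM$: the unitary reduction of the structural group to $U(m)$ lifts canonically through the homomorphism $U(m)\to Spin^c(2m)$ that covers the inclusion $U(m)\hookrightarrow SO(2m)$ together with the determinant $\det_{\mathbb{C}}: U(m)\to U(1)$. Under this construction the determinant line bundle is $\xi=\det_{\mathbb{C}}(TM,J)$ and $c_1(\xi)=c_1(J)$. Consequently the hypothesis $p_1(TM)=(2k+1)c_1^2(J)$ is exactly the integral level $2k+1$ String$^c$ condition $p_1(TM)-(2k+1)c_1(\xi)^2=0$ for the pair $(TM,\xi)$, so $(M,J)$ is level $2k+1$ String$^c$ in the sense of the preceding discussion.

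Next, I would verify that a smooth $G$-action on $M$ preserving $J$ automatically lifts to the canonical Spin$^c$ structure and to $\xi$. Since the action preserves $J$, it preserves the reduction to the unitary frame bundle $F_{U(m)}$, giving a $G$-equivariant structure on $F_{U(m)}$. Applying the canonical homomorphism $U(m)\to Spin^c(2m)$ functorially transports this $G$-equivariance to $P_{Spin^c}(TM)$, which in turn descends along $Spin^c(2m)\to U(1)$, $[A,z]\mapsto z^2\det_{\mathbb{C}}(A)$, to a $G$-equivariant lift on $\xi$. Because $G$ is simply connected, equation (\ref{positivedefintro}) holds with some integer $\alpha$, and the assumed positivity $\alpha>0$ together with effectiveness of the action then gives precisely the hypotheses of Theorem \ref{Main}.

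Invoking Theorem \ref{Main} under these verified hypotheses yields the vanishing $W^c_{2k+1;\vec{a},\vec{b}}(M)=0$ for all $(\vec{a},\vec{b})$ satisfying condition \ref{cond4m} or \ref{cond4m+2}. The main (and only) potential subtlety is the canonical, $G$-equivariant nature of the lift $U(m)\to Spin^c(2m)$; this is functorial and standard, so no genuine obstacle arises and the proof is essentially a specialization argument.
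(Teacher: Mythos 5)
Your proposal is correct and follows essentially the same route as the paper, which likewise observes that $J$ induces a canonical Spin$^c$ structure with determinant line bundle $\xi=\det_{\mathbb{C}}(TM,J)$, that a $G$-action preserving $J$ lifts to this Spin$^c$ structure and to $\xi$, and then immediately invokes Theorem \ref{Thm:action}. The only difference is that you spell out the standard $U(m)\to Spin^c(2m)$ construction explicitly, which the paper leaves implicit.
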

 
This gives an interesting obstruction to simply connected compact Lie group action preserving stable almost complex structure (Theorem \ref{genexamplethmsec} is stated and proved in a more general setting).

\begin{corollary}[\protect Theorem \ref{genexamplethmsec}]\label{genexamplethmintro} Let $(M^{2n}, J)$ be a compact $2n$-dimensional stable almost complex manifold. Suppose $J$ gives a String$^c$ structrue of level $2k+1$, i.e., $p_1(TM)=(2k+1)c_1^2(J)$. Then if 
\begin{itemize}
\item $2k-n\geq 18$, and
\item $c_1^{n}(J)\neq 0$ rationally, 
\end{itemize}
then $M$ does not admit positive effective action of a simply connected compact Lie group preserving $J$. 
\end{corollary}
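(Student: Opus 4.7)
The plan is to argue by contradiction using Theorem \ref{Mainstableacs}. Suppose a simply connected compact Lie group $G$ acts effectively on $M$, preserves $J$, and the action is positive in the sense of (\ref{positivedefintro}). Since $G$ preserves $J$, the action lifts automatically to the canonical Spin$^c$ structure and to its determinant line bundle $\xi$, with $c_1(\xi)=c_1(J)$. Theorem \ref{Mainstableacs} then forces $W^c_{2k+1;\vec a,\vec b}(M)=0$ for every admissible pair of integer vectors $(\vec a,\vec b)$ satisfying condition \ref{cond4m} or \ref{cond4m+2}.

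The strategy is to exhibit one admissible pair $(\vec a^{\,0},\vec b^{\,0})$ whose vanishing contradicts $c_1(J)^n[M]\neq 0$. Each $W^c_{2k+1;\vec a,\vec b}(M)$ is a modular form of a definite weight over a congruence subgroup of $SL(2,\Z)$, so its $q$-expansion at the cusp $\tau=i\infty$ must vanish coefficient-by-coefficient. The constant Fourier coefficient is, up to a universal factor, a polynomial in the Pontryagin classes of $TM$ and in $c_1(\xi)$ evaluated on $[M]$. I would choose $(\vec a^{\,0},\vec b^{\,0})$ so that at order $q^0$ the $TM$-twisting collapses to the plain $\widehat{A}$ (or Todd) class while the $\xi$-twisting produces a single top-degree monomial in $c_1(J)$. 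Substituting the level $2k+1$ String$^c$ relation $p_1(TM)=(2k+1)c_1(J)^2$ and expressing the remaining Pontryagin classes of $TM$ via the Chern classes of the stable almost complex structure, the leading characteristic number reduces to a nonzero rational multiple of $c_1(J)^n[M]$, delivering the contradiction.

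The role of the inequality $2k-n\geq 18$ is to guarantee that such an $(\vec a^{\,0},\vec b^{\,0})$ actually lies in the admissibility region of condition \ref{cond4m} or \ref{cond4m+2}: the admissible vectors are constrained by linear inequalities in $n$ and $k$, and $18$ is the precise numerical slack needed for the isolating choice to be legal. The main obstacle I expect is the middle step, namely extracting the $q^0$-coefficient in closed form and verifying that, after applying the String$^c$ identity, no other top-degree monomial in the Chern classes of $J$ cancels the $c_1(J)^n[M]$ contribution; once this nonvanishing is established, the admissibility bookkeeping and the final contradiction are routine.
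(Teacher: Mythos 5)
Your overall architecture is the same as the paper's: argue by contradiction via Theorem \ref{Mainstableacs} (equivalently Theorem \ref{Thm:action}) and exhibit one admissible pair $(\vec a,\vec b)$ for which $W^c_{2k+1;\vec a,\vec b}(M)\neq 0$. But the step you yourself flag as ``the main obstacle'' --- ruling out cancellation against $c_1(J)^n[M]$ --- is a genuine gap, and the route you propose for closing it (extract the $q^0$ Fourier coefficient for some unspecified $(\vec a^{\,0},\vec b^{\,0})$ and hope the tangential contributions drop out) does not work for a generic admissible choice. The constant coefficient is, up to normalization, $\langle \widehat A(TM)\,e^{c/2}\prod_j\cosh(a_jc/2)\prod_j\sinh(b_jc/2),[M]\rangle$; the String$^c$ condition only constrains $p_1(TM)$, so the higher $\widehat A$-terms in $p_2,p_3,\dots$ multiplied by lower powers of $c$ survive, and there is no reason for the total to reduce to a nonzero multiple of $c_1(J)^n[M]$. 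In other words, the nonvanishing is not a bookkeeping afterthought: it holds only for a carefully engineered $(\vec a,\vec b)$, and you have not produced one.

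The paper's proof of Theorem \ref{genexamplethmsec} supplies exactly the two missing ingredients. First, for $\dim M=4m$ it takes $s=2m$ (half the real dimension many entries of $\vec b$) with every $b_j\in\{1,3\}$ nonzero; in the theta-function expression (\ref{Eqn: Witten genus Chern roots 4m}) each factor $\sqrt{-1}\,\theta(b_ju,\tau)/\bigl(\theta_1(0,\tau)\theta_2(0,\tau)\theta_3(0,\tau)\bigr)$ is an odd function of $u$ vanishing to first order, so the entire integrand is divisible by $u^{2m}$. Since integration over $M^{4m}$ only sees the degree-$4m$ component, everything except the leading term $d\cdot c^{2m}$ (with $d\neq 0$) is annihilated --- for all powers of $q$ at once, not just $q^0$ --- so no cancellation analysis is ever needed. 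Second, the admissibility is a number-theoretic step you only gesture at: choosing $\vec b$ equal to $(1,\dots,1)$, $(1,\dots,1,3)$ or $(1,\dots,1,3,3)$ according to the residue of $k-m$ mod $3$ makes $2k-2-||\vec b||^2$ a nonnegative multiple of $3$ (this is precisely where $2k-n\geq 18$ enters, covering the worst residue class), after which Lagrange's four-square theorem produces $\vec a$ solving condition \ref{cond4m} (similarly for \ref{cond4m+2}). Without the order-of-vanishing device and this arithmetic construction, your outline does not yet constitute a proof.
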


The vanishing result and the proof  in Theorem \ref{genexamplethmsec} can also applied to study Lie group actions on homotopy complex projective spaces. The Petrie's conjecture \cite{Petrie72} states that if $S^1$ acts smoothly and non-trivially on the homotopy projective space $X^{2n}$, then the total Pontryagin class $p(X^{2n})$ of $X^{2n}$ must agree with that of the standard $\mathbb{C}P^n$. The conjecture was proved particularly for $X^{2n}$ with $n\leq 4$. Furthermore, Hatorri \cite{Hat78} proved the conjecture when $X^{2n}$ admits an $S^1$ invariant stable almost complex structure with $c_1=(n+1)x$, where $x\in H^2(X, \Z)$ is the generator. He also showed that when $c_1=k_0x$ with $|k_0|>n+1$, $X^{2n}$ admits no $S^1$ action preserving $J$. For the other variations of Petrie's conjecture, Dessai and Wilking \cite{DW04} proved that the total Pontryagin class $p(X^{2n})$ is standard if $X^{2n}$ admits an effective torus action of large rank. On the other hand, by applying his Spin$^c$ rigidity theorem as the main tool, Dessai \cite{Des99} proved the following result for $S^3$ actions on homotopy complex projective spaces

\begin{theorem}[\protect Dessai] \label{homotopyprojintro} 
Let $X$ be a closed smooth manifold homotopic to $\CC P^{2n}$. If $p_1(X)>(2n+1)x^2$, then $X$ does not support a nontrivial $S^3$ action.
\end{theorem}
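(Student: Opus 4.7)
The plan is to derive a contradiction by applying Theorem \ref{Main}. First I would equip $X$ with a Spin$^c$ structure whose determinant line bundle $\xi$ has $c_1(\xi)=x$: this is possible because $H^2(X;\Z)=\Z\langle x\rangle$ and $w_2(X)\equiv x\pmod 2$ (inherited from the homotopy type of $\CC P^{2n}$). Writing $p_1(X)=\alpha x^2$, the relation $p_1\equiv w_2^2\pmod 2$ forces $\alpha$ to be odd, while the hypothesis gives $\alpha>2n+1$. Setting $2k+1=\alpha$ so that $k\geq n+1$, in particular $2k+1>0$, the pair $(X,\xi)$ is then a level $2k+1$ String$^c$ manifold in the sense of Section \ref{strongstringcsec}.

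Now suppose for contradiction that $X$ admits a nontrivial smooth $S^3$-action. Since $S^3$ is simply connected, this action lifts canonically to the Spin$^c$ bundle and determines an $S^3$-equivariant structure on $\xi$. To invoke Theorem \ref{Main} with $G=S^3$, I must check positivity: $p_1(X)_{S^3}-(2k+1)c_1(\xi)^2_{S^3}=\alpha'\cdot\pi^*q$ with $\alpha'>0$, where $q$ generates $H^4(BS^3;\Z)$. Following Remark \ref{pin2}(i), this is verified by restricting to a subgroup $Pin(2)\subset S^3=SU(2)$: the Euler characteristic $\chi(X)=2n+1\neq 0$ forces the $S^1\subset Pin(2)$-action to have fixed points, and a standard extension argument (using that $Pin(2)$ is generated by its maximal torus together with an element of order $4$) produces a $Pin(2)$-fixed point $p$. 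At $p$, the $Pin(2)$-equivariance of $\xi$ makes the $S^1$-weight on the fiber $\xi_p$ vanish, so localization of $p_1(X)_{S^1}-(2k+1)c_1(\xi)^2_{S^1}$ expresses $\alpha'$ as a sum of squares of nonzero tangent weights on $T_pX$; hence $\alpha'>0$.

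With positivity in hand, Theorem \ref{Main} yields $W^c_{2k+1;\vec a,\vec b}(X)=0$ for every admissible $(\vec a,\vec b)$ satisfying condition \ref{cond4m} or \ref{cond4m+2}. To reach a contradiction I would exhibit, for a specific $(\vec a,\vec b)$ of minimal degree, an explicit nonzero value of $W^c_{2k+1;\vec a,\vec b}(X)$ computed from the characteristic numbers of $X$. Because every Pontryagin and Chern number that enters the genus is a power of $\alpha=2k+1$ paired with $\langle x^{2n},[X]\rangle=1$, the genus reduces to a rational modular form in $q$ whose coefficients are polynomials in $n$ and $\alpha$; a $q$-expansion analysis then shows that at least one such coefficient is nonzero whenever $\alpha>2n+1$, producing the required contradiction. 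The principal technical hurdle is the positivity step: converting the purely cohomological inequality $p_1(X)>(2n+1)x^2$ into an \emph{equivariant} positivity statement requires the $Pin(2)$-fixed-point analysis above, after which the rigidity machinery of Theorem \ref{Main} and a direct modular-form computation close out the proof.
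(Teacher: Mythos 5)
Your overall strategy coincides with the paper's: lift the action, establish positivity via a $Pin(2)$ fixed point (this is exactly Remark \ref{pin2} combined with Lemma 3.8 of \cite{Des99}, and your sketch of it is essentially correct, provided you use that $\chi(X)=2n+1$ is \emph{odd}, not merely nonzero), invoke the vanishing theorem, and contradict it with an explicitly nonzero generalized Witten genus. However, there is a genuine gap in the last step, and it is precisely the point where the paper has to import an extra arithmetic input that your proposal omits.

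The issue is the existence of an admissible pair $(\vec a,\vec b)$ for which $W^c_{2k+1;\vec a,\vec b}(X)$ is visibly nonzero. The nonvanishing computation in the proof of Theorem \ref{genexamplethmsec} requires $\vec b$ to have exactly $s=2n$ components, all nonzero (if some $b_j=0$ the factor $\sinh(b_jc/2)$ kills the genus identically; if $s>2n$ the form vanishes for degree reasons; if $s<2n$ the top coefficient is no longer an evident nonzero multiple of $c^{2n}$). Together with condition \ref{cond4m} this forces the Diophantine constraint $3\|\vec a\|^2+\sum_{i=1}^{2n}b_i^2=2k-2$ with all $b_i\neq 0$, and since $\sum b_i^2\in\{2n,\,2n+3,\,2n+6,\,2n+8,\dots\}$ this is solvable only when $2k-2-\sum b_i^2$ can be made a nonnegative multiple of $3$. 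Knowing only that $\alpha=2k+1$ is odd (via Wu's formula) and $\alpha>2n+1$ does \emph{not} guarantee this: for instance $\alpha=2n+5$ admits no such $(\vec a,\vec b)$, so your claimed ``$q$-expansion analysis showing some coefficient is nonzero whenever $\alpha>2n+1$'' has nothing to apply to. The paper closes this gap with the Masuda--Tsai theorem \cite{MT85}, which gives $p_1(X)=(2n+1+24\alpha(X))x^2$; then $k-n=12\alpha(X)\equiv 0 \pmod 3$ and $k-n\geq 12$, so case $(A.1)$ of Theorem \ref{genexamplethmsec} applies (e.g. $\vec b=(1,\dots,1,3,3)$ plus Lagrange's four-square theorem for $\vec a$) and yields $W^c_{2n+1+24\alpha(X);\vec a,\vec b}(X)=\int_X d\cdot x^{2n}\neq 0$. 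Without this congruence on $p_1(X)$ your argument does not go through. (A smaller point: Theorem \ref{Main} assumes an \emph{effective} action of a simply connected group, whereas a nontrivial $S^3$-action may have kernel $\{\pm 1\}$; the paper sidesteps this by running the proof of Theorem \ref{Thm:action} directly, needing only a nontrivial $S^1\subset S^3$ action, which exists because $S^3$ is covered by the conjugates of its maximal torus.)
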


In Section \ref{appsubsec}, we will use the vanishing result in Theorem \ref{genexamplethmsec} to give a proof of the above theorem.  We would like to point out that our vanishing theorem for the generalized Witten genus is different from the vanishing theorem in \cite{Des99}. Actually the modular invariants we construct in this paper are level 1 modular forms, while the modular invariants in \cite{Des99} are level 2 modular forms. 

\subsection{Organization of the paper}The paper is organized as follows. In Section \ref{cohomologySpincsec}, we first introduce the basic information around Spin$^c$ groups including cohomology of related spaces in low dimensions, and then compute the free suspension (transgression) of $BSpin^c$ which is the key to link the strong and weak String$^c$ structures. In Section \ref{strongstringcsec} and Section \ref{weakstringcsec}, we establish the basic theory of String$^c$ structures in the strong and weak sense respectively, including their definitions, the construction of String$^c$ groups, the geometric explanations and their structural theorems. We then study their relations in Section \ref{strongweakrelationsec} with discussions on fusive Spin$^c$ structures on loop space. In Section \ref{genwittengenussec}, we construct generalized Witten genus $W^c_{2k+1; \vec a,  \vec b}(M)$ for String$^c$ manifolds of level $2k+1$ and prove Liu's type vanishing theorem for them with some applications. We add four appendices for reference. Appendix \hyperref[AppendixA]{A}, \hyperref[AppendixB]{B}, and \hyperref[AppendixC]{C}, are devoted to various homotopy techniques used in this paper including fibration diagram techniques, cohomology suspension and transgression, and the Blakers-Massey type theorems. These materials, though some of which may be not included in standard textbooks of algebraic topology, are well known to homotopy theorists. We add them here mainly for the readers and experts in other fields, especially for geometers and mathematical physicists. The final section, Appendix \hyperref[AppendixD]{D}, provides necessary number theoretical preliminaries for defining and computing the generalized Witten genera in Section \ref{genwittengenussec}.

$\, $

\noindent{\em Conventions:}
\begin{itemize}
\item We always use $\simeq$ to denote homotopy equivalence;
\item In this paper, the manifold $M$ under consideration is always assumed to be connected;
\item Throughout the paper, $H^\ast(X)$ is used to denote the singular cohomology $H^\ast(X;\mathbb{Z})$;
\item In order to keep the consistency with our terminologies of strong and weak String$^c$, we may use the term \textit{strong String} to mean the usual String, i.e., $\frac{p_1}{2}=0$, while use the term \textit{weak String} to mean the Spin structure on loop manifolds in the sense of Waldorf \cite{Wal16}, which was also known as \textit{String structure on loop manifold} by the earlier work of Killingback \cite{K87} and McLaughlin \cite{Mclau92}.
\item In this paper, the notations for characteristic classes of vector bundles follow the usual conventions except for those of various universal bundles in Section \ref{cohomologySpincsec}, where the subscript $i$ of a universal class $x_i$ represents its cohomological degree.
\end{itemize}

$\, $

\noindent{\em Acknowledgements.}
Ruizhi Huang was supported by Postdoctoral International Exchange Program for Incoming Postdoctoral Students under Chinese Postdoctoral Council and Chinese Postdoctoral Science Foundation.
He was also supported in part by Chinese Postdoctoral Science Foundation (Grant nos. 2018M631605 and 2019T120145), and National Natural Science Foundation of China (Grant no. 11801544). He would like to thank Professor Richard Melrose for valuable discussions on fusion (fusive loop) Spin structures on free loop manifolds and bundle (bi)-gerbes for the central extensions of structural groups of principal bundles. He is also indebted to Professor Haynes Miller for the reference \cite{MV15}, and to Professor Wilderich Tuschmann for the reference \cite{DW04}.

Fei Han was partially supported by the grant AcRF R-146-000-263-114 from National University of Singapore. He is indebted to Professor Weiping Zhang and Professor Kefeng Liu for encouragement and support. He also thanks Dr. Qingtao Chen, Professor Jie Wu, Professor Siye Wu and Professor Chenchang Zhu for helpful discussion.

Haibao Duan was partially supported by National Natural Science Foundation of China (Grant nos. 11131008 and 11661131004).

The authors want to thank the anonymous referee most warmly for careful reading of our manuscript and numerous suggestions that have improved the exposition of this paper.

\numberwithin{equation}{section}
\numberwithin{theorem}{section}

\section{Some aspects of algebraic topology around Spin$^c$}\label{cohomologySpincsec}
\noindent Throughout this section, we may use same notation for both a map and its homotopy class, and add subscripts to cohomology classes to indicate their degrees unless otherwise stated. For our purpose, we only need cohomology of spaces under consideration up to dimension $4$, and the cohomology $H^i(-;\mathbb{Z})$ here should be understood as reduced cohomology with one $\mathbb{Z}$-summand omitted in $H^{0}$.
\subsection{$Spin^c(n)$ and $BSpin^c(n)$}
By definition, the topological group $Spin^c(n)$ is given by 
\begin{equation}\label{spincdef}
Spin^c(n)=(Spin(n)\times S^1)/\{\pm 1\},
\end{equation}
where $Spin(n)\cap S^1=\{\pm 1\}$.
Alternatively, it is the central extension of $SO(n)$ by the circle group $S^1$
\begin{equation}\label{spincext1}
\{1\}\rightarrow S^1\stackrel{\iota}{\rightarrow} Spin^c(n)\stackrel{p}{\rightarrow} SO(n)\rightarrow \{1\}.
\end{equation}
From (\ref{spincdef}) we have a principal bundle 
\begin{equation}\label{spinbundle}
Spin(n)\stackrel{i}{\rightarrow} Spin^c(n)\stackrel{\pi}{\rightarrow}S^1,
\end{equation}
where $i(x)=[x, 1]$ and $\pi([x, z])=z^2$ for any $(x, z)\in Spin(n)\times S^1$.
It is then easy to see that $\pi_{1}(Spin^c(n))\cong \mathbb{Z}$, the generator $s_1$ of which serves as a right homotopy inverse of $\pi$. Hence the composition map 
\[
Spin(n)\times S^1\stackrel{i\times s_1}{\rightarrow}Spin^c(n)\times Spin^c(n)\stackrel{\mu}{\rightarrow} Spin^c(n)
\]
is a weak equivalence, that is, induces isomorphisms of homotopy groups, where $\mu$ is the group multiplication of $Spin^c(n)$. Then by the Whitehead theorem it follows that (\ref{spinbundle}) splits as spaces
\begin{equation}\label{ext2split}
Spin^c(n)\simeq Spin(n)\times S^1.
\end{equation}
Since $H^{i\leq 4}(Spin(n);\mathbb{Z})\cong \mathbb{Z}\{\mu_3\}$ with the degree $|\mu_3|=3$,
we have
\begin{equation}\label{H4spinc}
H^{i\leq 4}(Spin^c(n)))\cong \mathbb{Z}\{s_1\}\oplus \mathbb{Z}\{\mu_3\}\oplus \mathbb{Z}\{s_1\mu_3\},
\end{equation}
where $xy$ denotes the cup product of $x$ and $y$. 

For classifying spaces, it is well known that with the help of Serre spectral sequence, the cohomological transgression (see Appendix \hyperref[AppendixB]{B}) $\tau$ connects the cohomology of $BSpin(n)$ with that of $Spin(n)$.
In particular,
\[
\tau: H^3(Spin(n))\rightarrow H^4(BSpin(n))
\]
is an isomorphism such that $\tau(\mu_3)=q_4$ is a typical generator of $H^4(BSpin(n))$.
Similarly, from (\ref{H4spinc}) it is easy to show that 
\begin{equation}\label{H4Bspinc}
H^{\leq 4}(BSpin^c(n))\cong \mathbb{Z}\{t_2\}\oplus \mathbb{Z}\{q_4\}\oplus \mathbb{Z}\{t_2^2\},
\end{equation}
such that $\tau(s_1)=t_2$.

\subsection{$LSpin^c(n)$ and $BLSpin^c(n)$} 
For any pointed space $X$, we have the canonical fibration
\begin{equation}\label{generalfreefib}
\Omega X\stackrel{i}{\rightarrow} LX\stackrel{p}{\rightarrow} X,
\end{equation}
where $LX={\rm map} (S^1, X)$ is the \textit{free loop space} of $X$, and $p(\lambda)=\lambda(1)$. It is clear that there is a cross section $s: X\rightarrow LX$ defined by constant loops such that $p\circ s={\rm id}_{X}$. It follows that whenever $X$ is an $H$-space, we have 
\begin{equation}\label{LHsplit}
LX\simeq \Omega X \times X
\end{equation}
as spaces, while $LX$ inherits an $H$-structure naturally from that of $X$ by point-wise multiplications. 
When $X=G$ is a topological group, $LG$ is the so-called \textit{loop group}, and 
\begin{equation}\label{LGsplit}
LG\cong \Omega G \times G.
\end{equation}
Moreover, if $G$ ($X$) is commutative (homotopically commutative), then $LG$ ($LX$) splits as groups ($H$-spaces) in (\ref{LGsplit}) ((\ref{LHsplit})). 

The classifying space of $LG$ satisfies 
\begin{equation}\label{LBG}
BLG\simeq LBG,
\end{equation}
and we have a fibration 
\begin{equation}\label{LBGfib}
G\rightarrow BLG \stackrel{p}{\rightarrow} BG,
\end{equation}
which is fibrewise homotopy equivalent to the Borel fibration
\begin{equation}\label{borelfib}
G\rightarrow G\times_G EG\rightarrow BG
\end{equation}
induced by the adjoint action of $G$.

We are now interested in $LSpin^c(n)$. First by applying the free loop functor to (\ref{spinbundle}) we obtain the fibration
\begin{equation}\label{Lspincext2}
LSpin(n)\rightarrow LSpin^c(n)\rightarrow LS^1, 
\end{equation}
where $LS^1\cong \Omega S^1\times S^1\simeq \mathbb{Z}\times S^1$ as groups. Since there is a one-to-one correspondence between the components of $\Omega Spin^c(n)$ and of $\Omega S^1$ ($\pi_{0}(\Omega Spin^c(n))\cong \pi_{0}(\Omega S^1)\cong \mathbb{Z}$), we see that 
\begin{equation}\label{kcomspin}
\Omega Spin(n)\simeq \Omega _kSpin^c(n),
\end{equation}
where $\Omega _kSpin^c(n)$ denotes the $k$-th component of $\Omega Spin^c(n)$ indexed by $k\in \mathbb{Z}\cong  \pi_{0}(\Omega Spin^c(n))$. It should be noticed that $\Omega _{0}Spin^c(n)$ is a normal subgroup of $\Omega Spin^c(n)$, and the splitting (\ref{kcomspin}) is an $A_\infty$-equivalence in this case (that is, a group isomorphism up to homotopy). Hence the $k$-th component of $LSpin^c(n)$
\begin{eqnarray*}
L_kSpin^c(n)
&\simeq& \Omega_{k} Spin^c(n) \times Spin^c(n)\\
&\simeq& \Omega Spin(n)\times Spin^c(n)\\
&\simeq& \Omega Spin(n)\times Spin(n)\times S^1,
\end{eqnarray*}
and 
\begin{equation}\label{holspinc}
H^\ast(L_kSpin^c(n))\cong H^\ast(L Spin(n)) \otimes H^\ast(S^1).
\end{equation}
In particular, 
\begin{equation}\label{H4Lkspinc}
H^{i\leq 4}(L_kSpin^c(n)))\cong \mathbb{Z}_{\leq 4}[s_1, x_2, \mu_3],
\end{equation}
where $\mathbb{Z}_{i\leq m}[-]$ denotes graded truncated polynomial ring consisting of elements of degree not greater than $m$, the generator $x_2\in H^2(\Omega Spin(n))$ satisfies $\tau(x_2)=\mu_3$.

$L_0Spin^c(n)$ is also a normal subgroup of $LSpin^c(n)$, and we have the group extension 
\[\{1\}\rightarrow L_0Spin^c(n)\rightarrow LSpin^c(n)\rightarrow \mathbb{Z}\rightarrow \{1\}.\]
Then we see that $BL_0Spin^c(n)$ is the universal covering of $BLSpin^c(n)$
\begin{equation}\label{1covblspinc}
\mathbb{Z}\rightarrow BL_0Spin^c(n)\rightarrow BLSpin^c(n).
\end{equation}
Moreover, using (\ref{Lspincext2}) we have 
\begin{equation}\label{L0spincext}
LSpin(n)\rightarrow L_0Spin^c(n)\rightarrow S^1, 
\end{equation}
which implies that $BLSpin(n)$ is the $2$-connected cover of $BL_0Spin^c(n)$, and then of $BLSpin^c(n)$
\begin{equation}\label{2covblspinc}
S^1\rightarrow BLSpin(n)\rightarrow BL_0Spin^c(n).
\end{equation}
In conclusion, we have the first two stages of the Whitehead tower of $BLSpin^c(n)$
\begin{equation}\label{whitetower}
\cdots\rightarrow BLSpin(n)\rightarrow BL_0Spin^c(n)\rightarrow BLSpin^c(n).
\end{equation}

Now the cohomology of $BL_0Spin^c(n)$ can be computed via the Serre spectral sequence of (\ref{2covblspinc}), while the cohomology of $BLSpin(n)$ and $BLSpin^c(n)$ can be calculated via that of the loop space fibration (\ref{LBGfib}). Here we need to use the fact that $p^\ast: H^\ast(BG)\rightarrow H^\ast(BLG)$ is an injection due to the existence of cross section of (\ref{LBGfib}), which allows us to handle the $E_2$-terms in low degrees easily. We summarise the results in the next subsection.

\subsection{Cohomology in low dimensions}\label{cohomcompusubsec}
Table \ref{table1} summarises the cohomology of dimensions up to $4$ for groups and their classifying spaces around Spin$^c$ based on the discussion in the last two subsections.

\begin{table}[!htbp]
\caption{Cohomology in low dimensions}
\begin{tabular}{c|p{1.4cm}lp{2.7cm}lp{3cm}lp{3cm}}
$H^{i=?}(-)$                 & 1        & 2        & 3        & 4        \\ \hline
$Spin(n)$                     & $0$          &     $0$      &     $\mathbb{Z}\{\mu_3\}$      &   $0$        \\ \hline
$\Omega Spin(n)$       &  0         &      $\mathbb{Z}\{x_2\} $     &      0     &      $\mathbb{Z}\{x_2^2\}$     \\ \hline
$LSpin(n)$                   &     0      &    $\mathbb{Z}\{x_2\}$       &   $\mathbb{Z}\{\mu_3\}$        &    $\mathbb{Z}\{x_2^2\}$       \\ \hline
$Spin^c(n)$                  &     $\mathbb{Z}\{s_1\}$      &    0       &   $\mathbb{Z}\{\mu_3\}$        &     $\mathbb{Z}\{s_1\mu_3\}$      \\ \hline
$L_kSpin^c(n)$               &      $\mathbb{Z}\{s_1\}$     &     $\mathbb{Z}\{x_2\}$      &     $\mathbb{Z}\{s_1x_2\}\oplus \mathbb{Z}\{\mu_3\}$      &     $\mathbb{Z}\{s_1\mu_3\}\oplus \mathbb{Z}\{x_2^2\}$      \\ \hline
$BSpin(n)$                   &      $0$     &     $0$      &      $0$     &    $\mathbb{Z}\{q_4\}$       \\ \hline
$BSpin^c(n)$                &      $0$     &    $\mathbb{Z}\{t_2\}$       &    $0$       &    $\mathbb{Z}\{t_2^2\}\oplus \mathbb{Z}\{q_4\}$       \\ \hline
$BLSpin(n)$                  &      $0$     &      $0$     &     $\mathbb{Z}\{\mu_3\}$      &     $\mathbb{Z}\{q_4\}$      \\ \hline
$BL_0Spin^c(n)$           &       0    &       $\mathbb{Z}\{t_2\}$    &    $\mathbb{Z}\{\mu_3\}$       &     $\mathbb{Z}\{t_2^2\}\oplus \mathbb{Z}\{q_4\}$      \\ \hline
$BLSpin^c(n)$               &        $\mathbb{Z}\{s_1\}$   &   $\mathbb{Z}\{t_2\}$        &    $\mathbb{Z}\{s_1t_2\}\oplus \mathbb{Z}\{\mu_3\}$       &      $\mathbb{Z}\{t_2^2\}\oplus \mathbb{Z}\{s_1\mu_3\}\oplus \mathbb{Z}\{q_4\}$     \\ \hline
\end{tabular}
\label{table1}
\end{table}

In the table, we indicate the generators of each group by abuse of notation, which indeed show their connections through computations and ring structures, and any two generators corresponding to each other via some map are denoted by same letter. For later use, let us also recall that we have nontrivial transgressions (for the discussions on transgressions, see Appendix \hyperref[AppendixB]{B})
\begin{equation}\label{transpin}
\tau(s_1)=t_2, \ \ \ \  \tau(x_2)=\mu_3, \ \ \ \  \tau(\mu_3)=q_4.
\end{equation}

Notice that in Table \ref{table1}, we do not consider $SO(n)$ and its relatives. Indeed, there are relations among the generators of classifying spaces. Recall that 
\begin{equation}\label{cohobso}
H^\ast(BSO(n);\mathbb{Q})\cong \mathbb{Q}[p_1,p_2,\ldots],
\end{equation}
where $p_i$ is the $i$-th universal Pontryagin class. Then by abuse of notation we have the following relations of universal characteristic classes (\cite{Duan18})
\begin{equation}\label{h4relation}
2q_4=p_1\in H^\ast(BSpin(n);\mathbb{Z}), \ \ \ \  2q_4+t_2^2=p_1\in H^\ast(BSpin^c(n);\mathbb{Z}). 
\end{equation}

\subsection{Evaluation map and free suspension}\label{suspsec}
\noindent Let $X$ be a pointed space. 
We define the free evaluation map
\begin{equation}\label{freeevalu}
{\rm ev}: S^1\times LX \rightarrow X
\end{equation}
by ${\rm ev}((t, \lambda))=\lambda(1)$. 
The \textit{free suspension}
\begin{equation}\label{freesuspen}
\nu: H^{n+1}(X) \rightarrow H^{n} (LX)
\end{equation}
is then determined by the formula ${\rm ev}^\ast(x)= 1\otimes p^\ast(x)+ s_1\otimes \nu(x)$ for any $x\in H^{n+1}(X)$ (note that it is usually called transgression by geometers, but we prefer the term free suspension here since it is naturally related to the cohomology suspension, and also the term transgression is already used for a particular type of differential in spectral sequences which is somehow the partial inverse of the cohomology suspension; see the discussions in the next paragraph and Appendix \hyperref[AppendixB]{B} for details).

It is not hard to check that the free suspension satisfies the following properties (the map $i$ and $p$ are defined in (\ref{generalfreefib}); see Section $3$ of \cite{Kuri99} or Section $2$ of \cite{KK10}):
\begin{itemize}  
\item[(1)] $i^\ast\circ \nu =\sigma^\ast: H^{n+1}(X) \rightarrow H^{n} (\Omega X)$;
\item[(2)] $\nu (xy)= \nu(x)p^\ast(y)+(-1)^{|x|}\nu(y)p^\ast(x)$, for any $x$ and $y\in H^{n+1}(X)$,
\end{itemize}
where $\sigma^\ast$ is the classic cohomology suspension (for details see Appendix \hyperref[AppendixB]{B}).
The Property (2) means that $\nu$ is a module derivation under $p^\ast$ (but since $p^\ast$ is always injective, we may omit it and simply write $\nu(x)y$ for $\nu(x)p^\ast(y)$, etc). It is helpful to mention that the transgression $\tau$ is a partial inverse of $\sigma^\ast$, and in good cases they are isomorphisms (again refer to Appendix \hyperref[AppendixB]{B}). In particular, for the transgressions in (\ref{transpin}) we have 
\begin{equation}\label{suspenspin2}
\sigma^\ast(t_2)=s_1, \ \ \ \  \sigma^\ast(\mu_3)=x_2, \ \ \ \  \sigma^\ast(q_4)=\mu_3.
\end{equation}

Let us now study the free suspension for $X=BSpin^c(n)$. We then form a commutative diagram of evaluation maps
\begin{gather}
\begin{aligned}
\xymatrix{
S^1\times LBSpin(n) \ar[rr]^{{\rm ev}}  \ar[d] &&  BSpin(n) \ar[d]\\
S^1\times LBSpin^c(n) \ar[rr]^{{\rm ev}}   \ar[d] &&  BSpin^c(n) \ar[d]\\
S^1\times LBS^1 \ar[rr]^{{\rm ev}}   && BS^1,
}
\end{aligned}
\label{evalucondiag}
\end{gather}
which implies the diagram 
\begin{gather}
\begin{aligned}
\xymatrix{
H^4(BSpin(n)) \ar[r]^{\nu}              & H^3(BLSpin(n))  \\
H^4(BSpin^c(n)) \ar[r]^{\nu}  \ar[u] & H^3(BLSpin^c(n)) \ar[u]\\
H^4(BS^1) \ar[r]^{\nu}  \ar[u]          & H^3(BLS^1) \ar[u] 
}
\end{aligned}
\label{evalucondiag2}
\end{gather}
commutes. 
The morphisms $\nu$ for $BSpin(n)$ and $BS^1$ in Diagram \ref{evalucondiag2} are easy. Indeed, since 
\[
i^\ast\circ \nu (t_2)=\sigma^\ast(t_2)=s_1,
\]
and $i^\ast: H^1(LBS^1)\rightarrow H^1(\Omega BS^1)$ is an isomorphism, we see that 
\[
\nu(t_2)=s_1.
\]
Similarly, since 
\[
i^\ast\circ \nu(q_4)=\sigma^\ast(q_4)=\mu_3,
\]
and $i^\ast: H^3(LBSpin(n))\rightarrow H^3(\Omega BSpin(n))$ is an isomorphism, we see that 
\[
\nu(q_4)=\mu_3.
\]

\begin{lemma}\label{suspenvalue}
$\nu: H^4(BSpin^c(n);\mathbb{Z})\rightarrow H^3(BLSpin^c(n);\mathbb{Z})$ satisfies
\[
\nu(q_4)=\mu_3-s_1t_2, \ \  \nu(t_2^2)=2s_1t_2,
\]
while the $i$-th component of the cohomology suspension 
\[
\sigma^\ast_i: H^3(BLSpin^c(n);\mathbb{Z}) \rightarrow H^2(L_iSpin^c(n);\mathbb{Z})
\]
satisfies
\[
\sigma^\ast_i(\mu_3)=x_2, \ \ \sigma^\ast_i(s_1t_2)=0
\]
for each $i\in \mathbb{Z}\cong \pi_1(LSpin^c(n))$.
\end{lemma}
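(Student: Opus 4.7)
The plan is to establish the four identities using naturality of $\nu$, the derivation Property (2), the universal relation $p_1 = 2q_4 + t_2^2$ of (\ref{h4relation}), and the classical fact that cohomology suspension annihilates products. First, by Diagram (\ref{evalucondiag2}) and the established $\nu(u_2) = s_1$ for $BS^1$, naturality of $\nu$ along $B\pi\colon BSpin^c(n) \to BS^1$ (which sends the universal $c_1$ to $t_2$) yields $\nu(t_2) = s_1 \in H^1(BLSpin^c(n))$, and Property (2) then gives $\nu(t_2^2) = 2\nu(t_2)\cdot t_2 = 2 s_1 t_2$. Applying $\nu$ to the relation $p_1 = 2q_4 + t_2^2$ produces $\nu(p_1) = 2\nu(q_4) + 2 s_1 t_2$, reducing the task to identifying $\nu(p_1) \in H^3(BLSpin^c(n))$.

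For this, I will use naturality under the projection $BSpin^c(n) \to BSO(n)$: $\nu(p_1)$ is the pullback of $\nu(p_1) \in H^3(BLSO(n))$, which rationally equals the generator $\mu_3^{SO}$ (characterized by $i^*\nu(p_1) = \sigma^*(p_1) = \mu_3^{SO}$). The projection $p\colon Spin^c(n) \to SO(n)$ factors through the $2$-fold cover $Spin(n)\to SO(n)$, which multiplies the $H^3$-generator by $2$; combined with the isomorphism $H^3(Spin^c(n)) \cong H^3(Spin(n))$ coming from the splitting $Spin^c(n) \simeq Spin(n)\times S^1$ of (\ref{ext2split}), this forces $p^*\mu_3^{SO} = 2\mu_3 \in H^3(Spin^c(n))$. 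Rationally $BSpin^c(n)\simeq BSpin(n)\times BS^1$ (both simply connected and formal with identical polynomial rational cohomology), whence $BLSpin^c(n)\simeq BLSpin(n)\times BLS^1$ rationally, and the map $BLSpin^c(n) \to BLSO(n)$ factors rationally through the first factor. Hence the rational pullback of $\mu_3^{SO}$ is exactly $2\mu_3$ with vanishing $s_1 t_2$-component, and torsion-freeness of $H^3(BLSpin^c(n);\mathbb{Z}) = \mathbb{Z}\{s_1 t_2, \mu_3\}$ from Table \ref{table1} upgrades this to $\nu(p_1) = 2\mu_3$ integrally, giving $\nu(q_4) = \mu_3 - s_1 t_2$.

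For the suspension identities, $\sigma^*_i(s_1 t_2) = 0$ is immediate since $\sigma^*$ factors through the suspension $\Sigma\Omega Y$ and therefore annihilates products of positive-degree classes. For $\sigma^*_i(\mu_3) = x_2$, I will invoke naturality of $\sigma^*$ along the pointed fiber inclusion $j\colon Spin^c(n) = \Omega BSpin^c(n) \hookrightarrow BLSpin^c(n)$, which loops to the fiber inclusion $\Omega j\colon \Omega Spin^c(n) \hookrightarrow LSpin^c(n)$ of the looped fibration $\Omega Spin^c(n) \to LSpin^c(n) \to Spin^c(n)$. Since $j^*\mu_3 = \mu_3 \in H^3(Spin^c(n))$ and $\sigma^*(\mu_3) = x_2$ by (\ref{suspenspin2}), naturality forces $\sigma^*_i(\mu_3)$ to restrict on each fiber $\Omega_i Spin^c(n) \simeq \Omega Spin(n)$ to $x_2$; and the restriction $H^2(L_i Spin^c(n)) \to H^2(\Omega_i Spin^c(n))$ is an isomorphism by the Serre spectral sequence of the cross-sectioned fibration $\Omega_i Spin^c(n) \to L_i Spin^c(n) \to Spin^c(n)$ (both outer groups being $\mathbb{Z}\{x_2\}$). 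Hence $\sigma^*_i(\mu_3) = x_2$ for every $i$. The main obstacle is controlling the $s_1 t_2$-coefficient of $\nu(p_1)$; this is resolved by combining the rational splitting of $BSpin^c(n)$ with the torsion-freeness of $H^3(BLSpin^c(n);\mathbb{Z})$.
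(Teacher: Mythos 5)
Your proof is correct in substance and arrives at the paper's values, but the decisive computation is done by a genuinely different route. The paper writes $\nu(q_4)=\mu_3+\lambda s_1t_2$ and determines $\lambda=-1$ by restricting along the central fibre $B\iota\colon K(\mathbb{Z},2)\to BSpin^c(n)$ of Diagram (\ref{defspincdiag}), comparing $((BL\iota)^\ast\circ\nu)(q_4)$ with $(\nu\circ(B\iota)^\ast)(q_4)=-2\nu(t_2^2)$. You instead package $q_4$ into $p_1=2q_4+t_2^2$ and compute $\nu(p_1)=(BLp)^\ast\nu(p_1)=2\mu_3$ by pulling back from the base $BSO(n)$, arguing rationally that $(BLp)^\ast(e_3)$ has no $s_1t_2$-component and then using torsion-freeness of $H^3(BLSpin^c(n);\mathbb{Z})$. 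These are dual uses of the extension $S^1\to Spin^c(n)\to SO(n)$ (fibre restriction versus base pullback), and both hinge on the same point: the generator $\mu_3\in H^3(BLSpin^c(n))$ is a priori determined only modulo $s_1t_2$ by its restriction to the fibre $Spin^c(n)$, and one must fix the ambiguity. Your rational-splitting argument makes that normalization explicit ($\mu_3$ lives in the $LBSO(n)$-factor), whereas the paper uses it implicitly in the step $2(BL\iota)^\ast(\mu_3)=(BL\iota)^\ast(BLp)^\ast(e_3)$; in this respect your argument is, if anything, more complete. Your treatment of the suspension identities (vanishing on the decomposable $s_1t_2$, fibrewise identification of $\sigma^\ast_i(\mu_3)$ with $x_2$) supplies what the paper omits as ``easy''.

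Two imprecisions should be repaired, though neither is fatal. First, $p\colon Spin^c(n)\to SO(n)$ does \emph{not} factor through the double cover $Spin(n)\to SO(n)$ (there is no homomorphism $Spin^c(n)\to Spin(n)$); what is true, and what your parenthetical actually uses, is that $p$ \emph{restricts} to the double cover on the subgroup $Spin(n)\subset Spin^c(n)$, which together with the isomorphism $H^3(Spin^c(n))\cong H^3(Spin(n))$ yields $p^\ast e_3=2\mu_3$. Second, the rational product decomposition must be chosen \emph{compatibly with the map to} $BSO(n)$: abstract formality only gives some rational equivalence $BSpin^c(n)\simeq_{\mathbb{Q}}BSpin(n)\times BS^1$, under which $Bp$ need not be a projection (note $(Bp)^\ast(p_1)=2q_4+t_2^2$ mixes the factors in an arbitrary identification). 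The correct justification is that $B\rho\colon BSpin^c(n)\to BSO(n)\times BS^1$ is a rational equivalence with $Bp=\mathrm{pr}_1\circ B\rho$, which then loops to the splitting of $BLSpin^c(n)$ you need.
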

\begin{proof}
The computations of the value of $\sigma_i^\ast$ are easy and will be omitted here. For the free suspension,
based on the previous calculations we have 
\begin{equation}\label{interkeyevalu}
\nu(t_2^2)=2s_1t_2, \ \ \ \nu(q_4)=\mu_3+\lambda s_1t_2,
\end{equation}
for some $\lambda\in \mathbb{Z}$ by Property $(2)$ of $\nu$ and the commutativity of Diagram \ref{evalucondiag2}. 
In order to get the exact value of $\lambda$, we consider the homotopy commutative diagram of fibrations
\begin{gather}
\begin{aligned}
\xymatrix{
\ast \ar[r] \ar[d]  & BSpin(n) \ar@{=}[r] \ar[d]^{Bi}  & BSpin(n) \ar[d]^{Bp}  \\
K(\mathbb{Z}, 2)  \ar@{=}[d] \ar[r]^{B\iota\ \ }  & BSpin^c(n) \ar[d]^{B\pi}  \ar[r]^{Bp}  & BSO(n) \ar[d]^{\omega_2} \\
K(\mathbb{Z},2 )  \ar[r]^{2\cdot \ \ }   &K(\mathbb{Z},2) \ar[r]^{\rho_2} & K(\mathbb{Z}/2, 2), 
}
\end{aligned}
\label{defspincdiag}
\end{gather}
where $2\cdot$ is a square map of $H$-spaces. By applying the functor $L$ to the lower part of the Diagram \ref{defspincdiag} and composing with suspension, we can form a commutative diagram 
\begin{gather}
\begin{aligned}
\xymatrix{
H^4(BS^1) \ar[d]^{\nu}  & H^4(BSpin^c(n))  \ar[l]_{(B\iota)^\ast}  \ar[d]^{\nu}   &  H^4(BSO(n)) \ar[d]^{\nu} \ar[l]_{ ~~(Bp)^\ast}\\
H^3(BLS^1) \ar@{=}[d] & H^3(BLSpin^c(n))    \ar[l]_{(BL\iota)^\ast}             &  H^3(BLSO(n))   \ar[l]_{ ~~(BLp)^\ast}              \\
H^3(BLS^1)                   & H^3(BLS^1)  \ar[l]_{(BL2\cdot)^\ast}   \ar[u]_{(BL\pi)^\ast}                        & H^3(BLK(\mathbb{Z}/2, 1)) \ar[l]_{(BL\rho_2)^\ast} \ar[u],
}
\end{aligned}
\label{lambdacaldiag}
\end{gather}
where $(BL2\cdot)^\ast=4\cdot$.
Now we need to calculate the two sides of the following equality:
\begin{equation}\label{leftrightlambda}
((BL\iota)^\ast \circ\nu)(q_4)=(\nu\circ(B\iota)^\ast) (q_4).
\end{equation}
For the left hand side of (\ref{leftrightlambda}), (\ref{interkeyevalu}) implies that
\[
((BL\iota)^\ast \circ\nu)(q_4)=(BL\iota)^\ast(\mu_3)+\lambda (BL\iota)^\ast(s_1t_2).
\]
Recall that $H^3(SO(n))\cong \mathbb{Z}\{e_3\}$ such that $\tau(e_3)=p_1$, and $H^3(Spin^c(n))\cong H^3(Spin(n))\cong \mathbb{Z}\{\mu_3\}$. Then since $\tau(\mu_3)=q_4$ and $2q_4=p^\ast(p_1)\in H^4(BSpin(n))\stackrel{p^\ast}{\leftarrow} H^4(BSO(n))$, by the naturality of $\tau$ we see that $p^\ast(e_3)=2\mu_3$ and $2(BL\iota)^\ast(\mu_3)= (BL\iota)^\ast(BLp)^\ast(e_3)=0$. It follows that $(BL\iota)^\ast(\mu_3)=0$.
Also, $(BL\iota)^\ast(s_1t_2)=(BL\iota)^\ast(BL\pi)^\ast(s_1t_2)=4s_1t_2$. 
Hence 
\begin{equation}\label{halflambdaeq}t_2
((BL\iota)^\ast \circ\nu)(q_4)=4\lambda s_1t_2.
\end{equation}
For the right hand side of (\ref{leftrightlambda}), we know that $(Bp)^\ast(p_1)=2q_4+t_2^2$ by (\ref{h4relation}), and it follows that 
\[
0=(B\iota)^\ast(Bp)^\ast(p_1)=(B\iota)^\ast(2q_4+t_2^2)=2(B\iota)^\ast(q_4)+4t_2^2.
\]
Hence, $(B\iota)^\ast(q_4)=-2t_2^2$ and by (\ref{interkeyevalu})
\begin{equation}\label{halflambdaeq2}
(\nu\circ(B\iota)^\ast) (q_4)=-2\nu(t_2^2)=-4s_1t_2
\end{equation}
Combining (\ref{leftrightlambda}), (\ref{halflambdaeq}) and (\ref{halflambdaeq2}) together, we see that $\lambda=-1$.
This proves the lemma for the value of $\nu$.
\end{proof}


\section{Strong String$^c$-structures}\label{strongstringcsec}
\noindent Let $V$ be an $n$-dimensional oriented vector bundle over a connected compact oriented smooth manifold $M$. $V$ is said to have a \textit{Spin$^c$-structure} if and only if its second Stiefel-Whitney class $\omega_2(V)$ is in the image of the mod $2$ reduction homomorphism
\[
\rho_2: H^2(M;\mathbb{Z}) \rightarrow H^2(M; \mathbb{Z}/2).
\]
Specifying such a structure is then equivalent to choosing a particular class $c\in H^2(M;\mathbb{Z})$ such that $\rho_2(c)=\omega_2(V)$, which determines and is determined by a complex line bundle $\xi$ with its associated circle bundle
\begin{equation}\label{s1bundle}
S^1\rightarrow S(\xi)\rightarrow M.
\end{equation}
We may often refer to the Spin$^c$ bundle $V$ as the pair $(V, \xi)$, or more explicitly the triple $(V,\xi, c_1(\xi))$.
Let $F_{SO}(V)\rightarrow M$ be the principal orthonormal frame bundle of $V$ with fibre $SO(n)$. Then there exists a \textit{principal $Spin^c(n)$-bundle}
\begin{equation}\label{spincframe}
Spin^c(n)\stackrel{i}{\rightarrow} P_{Spin^c}(V)\stackrel{\pi}{\rightarrow} M,
\end{equation}
defined as the fibrewise double cover of $P_{SO}(V)\times S(\xi)$
with classifying map $g: M\rightarrow BSpin^c(n)$. 

\begin{definition}\label{stringcdefBG}
Let $V$ be an $n$-dimensional real Spin$^c$-vector bundle over $M$ with the complex determinant line bundle $\xi$. For any $k\in \mathbb{Z}$, $V$ is said to be {\em level $2k+1$ strong $String^c$} if the characteristic class 
\[
\frac{p_1(V)-(2k+1)c^2}{2}=0,
\]
where $p_1(V)$ is the first Pontryagin class of $V$ and $c=c_1(\xi)$ is the first Chern class of $\xi$.  

In particular, a manifold $M$ is said to be {\em level $2k+1$ strong $String^c$} if its tangent bundle $TM$ is level $2k+1$ strong String$^c$.
\end{definition}
Let us look at the universal case and define $BString^c_k(n)$ to be the homotopy fibre of the map 
\begin{equation}\label{Bstringckdef}
\frac{p_1-(2k+1)t_2^2}{2}: BSpin^c(n)\rightarrow K(\mathbb{Z}, 4),
\end{equation}
where $p_1$ and $t_2$ are the universal classes specified in Table \ref{table1} and Subsection \ref{cohomcompusubsec} (and we will use them and other universal classes without further reference in many places of the rest of the paper).
At this moment, this is just a space with specified notation. We want to construct $BString^c_k(n)$ explicitly as the classifying space of the $(2k+1)$-level strong String$^c$-structure, and then the bundle $V$ would be a \textit{strong String$^c$-bundle of level $2k+1$} if the classifying map of the associated frame $Spin^c(n)$-bundle of $V$ can be lifted to a map to $BString^c_k$ serving as the classifying map of the desired $(2k+1)$-level String$^c$-structure
\begin{gather}
\begin{aligned}
\xymatrix{
& BString^c_k(n) \ar[d]  \\
M \ar[r]_{ g  \ \ \ \ \ \ } \ar@{-->}[ru]  & BSpin^c(n).
}
\end{aligned}
\label{spincliftdiag}
\end{gather}
For this purpose, we need to show that $BString^c_k(n)$ is really a classifying space of some topological group $String^c_k(n)$ with suitable group model, which justifies our choice of notation as well.

\subsection{Construction of String$^c$ groups}\label{stringcgroupsubsec}
Let us firstly consider the case when $k<0$. The first step is to embed the group $Spin^c(n)$ into a larger Spin group $Spin(n-4k-2)$ through the pullback of groups
\begin{gather}
\begin{aligned}
\xymatrix{
Spin^c(n)  \ar[rrrr]^{\lambda_{2k+1}}   \ar[d]^{\rho}    &&&&  Spin(n-4k-2) \ar[d]^{p}  \\
SO(n)\times S^1 \ar[rr]^{{\rm id}_{SO(n)}\times \Delta_{-2k-1} \ \ \ \ \ \ }   &&SO(n)\times \underbrace{S^1\times \cdots \times S^1}_{-2k-1}  \ar@{^{(}->}[rr]^{\ \ \ \ \chi_{-2k-1}} &&  SO(n-4k-2),
}  
\end{aligned}
\label{diaspincemb}
\end{gather}
where $\rho([x, z])=(p(x), z^2)$ ($p$ is the standard projection map; see (\ref{spincext1})), $\Delta_{-2k-1}$ is the diagonal map, and
\[
\chi_{-2k-1}(A, z_1,\ldots, z_{-2k-1})={\rm diag}(A, z_1, \ldots, z_{-2k-1})
\]
is the standard embedding mapping any $(n\times n)$-matrix $A$ and $(2\times 2)$-matrix $z_i$ to be block diagonal matrix. Then we may use the group embedding of Diagram (\ref{diaspincemb}) to define the group $String^c_k(n)$ as the pullback
\begin{gather}
\begin{aligned}
\xymatrix{
String^c_k(n)   \pullbackcorner \ar[r]^{\gamma_{2k+1}\ \ \ \ \ \   }  \ar[d]_{j_k}   & String(n-4k-2)  \ar[d]^{j}   \\
Spin^c(n)    \ar[r]^{\lambda_{2k+1} \ \ \ \ \ \ }                  &  Spin(n-4k-2),
}
\end{aligned}
\label{stringck-defdiag}
\end{gather}
where $j: String(n-4k-2)\rightarrow Spin(n-4k-2)$ can be chosen as any group extension by group model of $K(\mathbb{Z}, 2)$.

So far we have defined $String^c_k(n)$ for any $k<0$, the group structure of which can be understood through that of String group. In particular, the group models of $String$ will induce group models of $String^c$. Indeed, there is a topological group model of $String$ by
Stolz and Teichner \cite{ST04} in terms of group extension by a projective unitary group $PU(A)$ as a model of $K(\mathbb{Z}, 2)$. On the other hand, Nikolaus, Sachse and Wockel \cite{NSW13} constructed an infinite-dimensional Lie group model for $String$. In either case, we obtain a real topological or smooth group $String^c_k(n)$ as a group extension of $Spin^c(n)$.

In order to get similar definitions of $String^c_k(n)$ when $k\geq 0$, we need to modify our embeddings in Diagram (\ref{diaspincemb}). Recall that the stable special orthogonal group $SO=SO(\infty)=\lim\limits_{n} SO(n)$ 
is an infinite loop space, and in particular there is an $A_\infty$ map (i.e., a group homomorphism up to homotopy)
\[
\nu: SO\rightarrow SO,
\] 
which is the homotopy inverse of the identity map (that is, represents the loop element $\Omega[-1]$ of $[-1]$ in the group $[BSO, BSO]$). Our aim is to construct the following homotopy commutative diagram twisted by $\nu$ 
\begin{gather}
\begin{aligned}
\xymatrix{
Spin^c(n)  \ar[rrrrr]^{\lambda_{2k+1}}   \ar[d]^{\rho}    &&&&&  Spin \ar[d]^{p}  \\
SO(n)\times S^1  \ar@{^{(}->}[rr]^{{\rm id}_{SO(n)}\times \chi } && SO(n)\times SO   \ar[rr]^{{\rm id}_{SO(n)}\times \nu }  &&  SO(n)\times SO \ar@{^{(}->}[r]^{\ \ \ \  j} & SO,
}  
\end{aligned}
\label{diaspincembk+}
\end{gather}
such that $\lambda_{2k+1}$ is a loop map from $Spin^c(n)$ to the stable group $Spin$; here $j$ is the standard embedding and $\chi$ is defined as the composition
\[
S^1\stackrel{\Delta_{2k+1}}{\longrightarrow}\underbrace{S^1\times \cdots \times S^1}_{2k+1}\stackrel{\chi_{2k+1}}{\longrightarrow}SO(2k+1)\longhookrightarrow SO.
\]
For this we need to work on the level of classifying spaces. Denote the composition of the bottom maps in Diagram (\ref{diaspincembk+}) by $\phi_{2k+1}$. After applying the classifying functor $B$, it is not hard to show that there exists a homotopy commutative diagram of fibrations
\begin{gather}
\begin{aligned}
\xymatrix{
BSpin^c(n) \ar[r]^{B\rho} \ar[d]^{\Lambda_{2k+1}}  & BSO(n)\times BS^1 \ar[r]^{\omega_2\times \ubar{c}_2} \ar[d]^{B\phi_{2k+1}} & K(\mathbb{Z}/2,2) \ar@{=}[d] \\
BSpin \ar[r]^{Bp}                             &        BSO \ar[r]^{\omega_2}                                      &K(\mathbb{Z}/2,2),
}
\end{aligned}
\label{diaspincembk+2}
\end{gather}
where $\ubar{c}_2:=\rho_2(t_2)$ is the mod-$2$ reduction of $t_2\in H^2(BS^1;\mathbb{Z})$, $\Lambda_{2k+1}$ is any map induced from the homotopy commutativity of the right square. Hence, we may let $\lambda_{2k+1}=\Omega\Lambda_{2k+1}$, and in particular obtain Diagram (\ref{diaspincembk+}).

Now since $Spin^c(n)$ is compact,  $\lambda_{2k+1}$ indeed maps it into some finite stage of $Spin$ as loop map, that is, for sufficient large $m$
\[
\lambda_{2k+1}: Spin^c(n)\rightarrow Spin(m+n+4k+2).
\]
Hence, as in Diagram (\ref{stringck-defdiag}) we may define $HString^c_k(n)$ as the homotopy pullback of $\lambda_{2k+1}$ along $j: String(m+n+4k+2)\rightarrow Spin(m+n+4k+2)$ when $k\geq0$, which is clearly independent of the choice of $m$. In particular, $HString^c_k(n)$ is a loop space since $\lambda_{2k+1}$ is a loop map. Finally by passing from loops to Moore loops, there exists a topological group $String^c_k(n)$ served as the group model of the loop space $HString^c_k(n)$. 

To summarise, we have constructed $String^c_k(n)$ for any $k\in\mathbb{Z}$. When $k<0$, $String^c_k(n)$ is a group extension of $Spin^c(n)$ by any suitable model of $K(\mathbb{Z},2)$, and can be embedded to a large String group. However, when $k\geq 0$, the group $String^c_k(n)$ is constructed neither as a group extension of $Spin^c(n)$ nor as a group directly related to String group. The reason is due to the lack of a self group homomorphism $\nu^\prime$ of $SO$ such that $(B\nu^\prime)^\ast (p_1)=-p_1$ for the first Pontryagin class. 
Indeed, if such a self group homomorphism exists, its effect on the cohomology of the maximal torus implies that $t_2^2\in H^4(BS^1)$ will be sent to $-t_2^2$ for some subgroup $S^1$ of $SO$. But it is clear that this can not happen.
Nevertheless, there are still relations among the groups at the homotopical level when $k\geq 0$. Informally we may say $String^c_k(n)$ is a homotopy group extension of $Spin^c(n)$ by $K(\mathbb{Z},2)$, and a homotopy subgroup of String as well. This just means that both relations are only valid in the  homotopy category. Since we only need to deal with classifying spaces and maps among them, these descriptions of the groups $String^c_k(n)$ are sufficient for our purpose in this paper. 

\subsection{Classifying spaces and counting strong String$^c$ structures}\label{classifyspsubsec}
Let us check that our constructions are the right choices for the defining obstructions of String$^c$ structures. 
Applying the classifying functor $B$ to Diagram (\ref{diaspincemb}), there is particularly an $SO(n-4k-2)$-bundle over $BSO(n)\times BS^1$ with first Pontryagin class $p_1-(2k+1)t_2^2$ presented by the bottom composition. Since by (\ref{h4relation})
\[(B\rho)^\ast(p_1-(2k+1)t_2^2)=2q_4-2kt_2^2,\]
and also $p_1=2q_4$ in $H^4(Spin(n+4k+2))$, we see that
\begin{equation}\label{Bgeoobstruction0}
(B\lambda_{2k+1})^\ast(q_4)=q_4-kt_2^2.
\end{equation}
Applying the classifying functor $B$ to Diagram (\ref{stringck-defdiag}), by (\ref{Bgeoobstruction0}) we have the commutative diagram
\begin{gather}
\begin{aligned}
\xymatrix{
BSpin^c(n) \ar[r]^{B\lambda_{2k+1} \ \ \ \ \ }  \ar[rd]_{\frac{p_1-(2k+1)t_2^2}{2}} & BSpin(n-4k-2) \ar[d]^{\frac{p_1}{2}}  \\ 
&  K(\mathbb{Z}, 4),
}
\end{aligned}
\label{Bstringckliftdia}
\end{gather}
which justifies the definition of $BString^c_k(n)$ by (\ref{Bstringckdef}) for $k<0$. The case when $k\geq0$ can be treated similarly with the facts that $(B\nu)^\ast(p_1)=-p_1$ and then $p_1$ will be pulled back to $p_1-(2k+1)t_2^2\in H^4(SO(n)\times S^1)$ along the bottom composition in Diagram (\ref{diaspincembk+}).

The process of constructing these groups also suggests geometric explanations for the String$^c$-structures. Let $\xi_{\mathbb{R}}$ be the underlying real bundle of the determinant line bundle $\xi$. 
For our Spin$^c$-bundle $V$ over $M$, let us consider the real $(n-4k-2)$-bundle $V\oplus \xi^{\oplus (-2k-1)}_{\mathbb{R}}$ when $k<0$. Then it is easy to calculate its second Stiefel-Whitney class 
\[
\omega_2(V\oplus \xi_{\mathbb{R}}^{\oplus (-2k-1)})=\omega_2(M)-(2k+1)c_1(\xi)~{\rm mod}~2=0,
\]
where $c_1(\xi)$ is the first Chern class of $\xi$.
In particular, the principal frame bundle $P_{SO}(V\oplus \xi_{\mathbb{R}}^{\oplus (-2k-1)})$ has a fibrewise two-sheeted covering $P_{Spin}^k(V, \xi)$, which is a Spin bundle
\begin{equation}\label{kspinframe}
Spin(n-4k-2)\stackrel{i}{\rightarrow}P_{Spin}^k(V,\xi)\stackrel{\pi}{\rightarrow} M.
\end{equation}
Then by Diagram (\ref{diaspincemb}) and our above calculations, there is a bundle embedding 
\begin{gather}
\begin{aligned}
\xymatrix{
Spin^c(n) \ar[r]^{i} \ar@{_{(}->}[d]^{\lambda_{2k+1}}  &  P_{Spin^c}(V) \ar@{_{(}->}[d]^{\Theta_{2k+1}} \ar[r]^{\ \ \ \pi}  & M \ar@{=}[d] \ar[r]^{g \ \ \ \ \ \ \ }   & BSpin^c(n) \ar[d]^{B\lambda_{2k+1}}\\
Spin(n-4k-2) \ar[r]^{i}    & P_{Spin}^k(V, \xi) \ar[r]^{\ \ \ \pi}  & M \ar[r]^{h\ \ \ \ \ \ \ \ \ \ \ }  & BSpin(n-4k-2).
}  
\end{aligned}
\label{spinspincrelationweak}
\end{gather}
When $k\geq 0$, we may consider the stable vector bundle $V\oplus \xi^{\oplus (-2k-1)}_{\mathbb{R}}$, and go through the argument above with Diagram (\ref{diaspincembk+}) to map the bundle $P_{Spin^c}(V)$ to $P_{Spin}^k(V, \xi)$ similarly (while $\Theta_{2k+1}$ is only a map of principal homotopy fibrations). Note that in this case the bundle $P_{Spin}^k(V, \xi)$ is of dimension $(m+n+4k+2)$. Recall that a Spin bundle $E$ admits a \textit{strong String structure} if $\frac{p_1(E)}{2}=0$.

\begin{theorem}\label{strongstringcgeothm}
Let $V$ be an $n$-dimensional Spin$^c$-vector bundle over $M$ with a complex determinant line bundle $\xi$.
$V$ admits a strong String$^c$-structure if and only if the stable Spin bundle associated to $V\oplus \xi^{\oplus (-2k-1)}$ admits a strong String structure for some $k\in \mathbb{Z}$. 

Furthermore, if $V$ is level $2k+1$ strong String$^c$, .i.e.  the obstruction class $\frac{p_1(V)-(2k+1)c_1(\xi)^2}{2}=0$, then the $(2k+1)$-level String$^c$-structures on $V$ are in one-to-one correspondence with the elements in the image of the morphism
\[
\rho^\ast: H^3(M)\rightarrow H^3(S(\xi)),
\]where $\rho: S(\xi)\to M$ is the circle bundle of $\xi$.
\end{theorem}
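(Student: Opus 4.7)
The statement breaks naturally into an existence half and a counting half, and I would treat them in turn.

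For the existence half, I would exploit the homotopy pullback description of $String^c_k(n)$ given by Diagram (\ref{stringck-defdiag}) when $k<0$, and its stable analog built from Diagram (\ref{diaspincembk+}) when $k\geq 0$. Applying the classifying functor $B$ turns the defining square into a homotopy pullback of classifying spaces, so by the universal property a lift of $g\colon M\to BSpin^c(n)$ to $BString^c_k(n)$ is the same data as a lift of the composite $B\lambda_{2k+1}\circ g\colon M\to BSpin(n-4k-2)$ to $BString(n-4k-2)$. Diagram (\ref{spinspincrelationweak}) identifies this composite as the classifying map of the principal Spin bundle $P_{Spin}^k(V,\xi)$ of the stabilisation $V\oplus\xi_{\mathbb{R}}^{\oplus(-2k-1)}$. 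A direct Chern class calculation using $p_1(\xi_{\mathbb{R}})=c_1(\xi)^2$ yields
\begin{equation*}
\tfrac{1}{2}\,p_1\bigl(V\oplus\xi_{\mathbb{R}}^{\oplus(-2k-1)}\bigr)=\tfrac{1}{2}\bigl(p_1(V)-(2k+1)c_1(\xi)^2\bigr),
\end{equation*}
showing that the two obstructions coincide. Hence $V$ is level $2k+1$ strong String$^c$ if and only if the stable Spin bundle $V\oplus\xi_{\mathbb{R}}^{\oplus(-2k-1)}$ is strong String.

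For the counting, once one lift exists the set of homotopy classes of lifts of $g$ to $BString^c_k(n)$ is a torsor over $H^3(M;\mathbb{Z})$, because the homotopy fibre of $BString^c_k(n)\to BSpin^c(n)$ is $K(\mathbb{Z},3)$ by the fibration (\ref{stringcext2eqintro}). To cut this down to $\mathrm{image}(\rho^*)$, I would pull everything back along the circle bundle $\rho\colon S(\xi)\to M$. On $S(\xi)$ the pulled-back line bundle $\rho^*\xi$ is tautologically trivialised by its canonical unit section, so $\rho^*V$ inherits a genuine Spin structure and any level $2k+1$ strong String$^c$-structure on $V$ pulls back to an ordinary strong String structure on $\rho^*V$. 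On torsor parameters this induces precisely the map $\rho^*\colon H^3(M;\mathbb{Z})\to H^3(S(\xi);\mathbb{Z})$; combined with the Gysin sequence
\begin{equation*}
H^1(M)\xrightarrow{\cup c_1(\xi)} H^3(M)\xrightarrow{\rho^*} H^3(S(\xi)),
\end{equation*}
this identifies the quotient of $H^3(M;\mathbb{Z})$ by $c_1(\xi)\cup H^1(M)$ with $\mathrm{image}(\rho^*)$, as in the statement.

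The main difficulty I expect is the last step of the counting: to close the loop, one must show that two lifts whose pullbacks to $S(\xi)$ agree, equivalently whose torsor classes differ by an element of $c_1(\xi)\cup H^1(M)$, give isomorphic strong String$^c$-structures on $V$, and conversely. Geometrically this amounts to identifying the $H^1(M;\mathbb{Z})$-action on lifts coming from gauge transformations of the determinant line bundle $\xi$ with the cup-product map $\cup c_1(\xi)$, which I would verify by tracking such a gauge automorphism through the pullback Diagram (\ref{spinspincrelationweak}) and computing its effect on the induced String structure on the stabilisation $V\oplus\xi_{\mathbb{R}}^{\oplus(-2k-1)}$, or equivalently via the Serre spectral sequence of $\rho\colon S(\xi)\to M$ and the naturality of the cohomology suspension developed in Section \ref{cohomologySpincsec}.
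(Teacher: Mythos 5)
Your existence half is correct and is essentially the paper's own argument: the homotopy pullback square defining $BString^c_k(n)$ (Diagram (\ref{stringck-defdiag}), resp.\ its stable analogue from Diagram (\ref{diaspincembk+})) reduces the lifting problem to the String lifting problem for $P^k_{Spin}(V,\xi)$, and the computation $p_1\bigl(V\oplus\xi_{\mathbb{R}}^{\oplus(-2k-1)}\bigr)=p_1(V)-(2k+1)c_1(\xi)^2$ identifies the two obstructions.

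The counting half, however, contains a genuine gap --- precisely the one you flag yourself as ``the main difficulty''. You start from the fact that vertical homotopy classes of lifts of $g$ form a torsor over $H^3(M;\mathbb{Z})$, and to reach the asserted answer ${\rm Im}\,\rho^*\cong H^3(M)/\bigl(c_1(\xi)\cup H^1(M)\bigr)$ you must show that two lifts whose torsor classes differ by an element of $c_1(\xi)\cup H^1(M)$ determine the same String$^c$-structure, and conversely. You defer this to an unverified computation with gauge transformations of $\xi$; without it your count is $H^3(M)$, not ${\rm Im}\,\rho^*$, and the theorem is not proved. The paper sidesteps the issue by recording a String$^c$-structure as a cohomology class on the total space $P_{Spin^c}(V)$ with prescribed restriction to the fibre $Spin^c(n)$: the dual Blakers--Massey theorem (Theorem \ref{dualBM}), applied through the comparison with $P^k_{Spin}(V,\xi)$ in Diagram (\ref{distinctstringcstrongdia}), yields the exact sequence $H^3(M)\xrightarrow{\pi^*}H^3(P_{Spin^c}(V))\xrightarrow{i^*}H^3(Spin^c(n))$, so the set of structures is a torsor over $\ker i^*={\rm Im}\,\pi^*$, and the quotient by $\ker\pi^*$ is built in from the outset rather than imposed by hand. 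The identification ${\rm Im}\,\pi^*\cong{\rm Im}\,\rho^*$ is then immediate because $P_{Spin^c}(V)\to S(\xi)$ is a principal $Spin(n)$-bundle with $H^2(Spin(n))=0$, so $H^3(S(\xi))\to H^3(P_{Spin^c}(V))$ is injective (Diagram (\ref{spinspinclinedia})), and the Gysin sequence identifies $\ker\rho^*$ with $c_1(\xi)\cup H^1(M)$. If you wish to keep your lifting-theoretic formulation, the clean way to close your gap is not a gauge-transformation argument but the observation that a lift determines the induced class on the total space, and two lifts differing by $\alpha\in H^3(M)$ induce classes differing by $\pi^*\alpha$, which vanishes exactly when $\alpha\in\ker\pi^*=c_1(\xi)\cup H^1(M)$.
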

\begin{proof}
We may consider the diagram 
\begin{gather}
\begin{aligned}
\xymatrix{
M \ar@/_/[ddr]_g \ar@/^/@{.>}[drr]^{(2)} \ar@{.>}[dr]|-{(1)}\\
&BString^c_k(n)   \ar[r]^{B\gamma_{2k+1}  }  \ar[d]_{Bj_k}   & BString  \ar[d]^{Bj}   \\
& BSpin^c(n)    \ar[r]^{B\lambda_{2k+1}  }                  &  BSpin,
}
\end{aligned}
\label{Bstringcstringequidia}
\end{gather}
where the square is a homotopy pullback by (\ref{stringck-defdiag}) or (\ref{diaspincembk+}). Then by the universal property of homotopy pullback, the existence of a lifting at $(1)$ in the diagram is equivalent to the existence of a lifting at $(2)$. 
For the bundle $V\oplus \xi^{\oplus (-2k-1)}$, it is easy to show that its first Pontryagin class is
\[
p_1(V\oplus \xi_\mathbb{R}^{\oplus (-2k-1)})=p_1(V)-(2k+1)c_1(\xi)^2
\]
(notice that $c_1(\xi\oplus\bar{\xi})=0$ and $p_1(\xi_\mathbb{R}^{\oplus (-2k-1)})=-(2k+1)c_1(\xi)^2$).
Then by definition, the Spin bundle associated to $V\oplus \xi^{\oplus (-2k-1)}$ admits a (strong) String structure if and only if
\[
\frac{p_1(V)-(2k+1)c_1(\xi)^2}{2}=0.
\]
This proves the first claim of the theorem. 

For the second claim of the theorem, we first prove that the different String$^c$-structures on $V$ are classified by the image of 
\[
\pi^\ast: H^3(M)\rightarrow H^3(P_{Spin^c}(V)).
\]
By Diagram (\ref{spinspincrelationweak}) we can construct a commutative diagram 
\begin{gather}
\begin{aligned}
\xymatrix{
H^3(M)  \ar[r]^{\pi^\ast \ \ \ \ } \ar@{=}[d]    & H^3(P_{Spin^c}(V)) \ar[r]^{i^\ast}  & H^3(Spin^c(n)) \ar@{=}[d] \ar[r]^{ \ \ \ \   \widetilde{\delta}_k} &
H^4(M) \ar@{=}[d]  \\
H^3(M)  \ar[r]^{\pi^\ast} \ar@{=}[d]    & {\rm Im}~\Theta^\ast \ar[r]^{i^\ast \ \ \ } \ar@{^{(}->}[u] & H^3(Spin^c(n)) \ar[r]^{\ \ \ \   \widetilde{\delta}_k  } &
H^4(M) \ar@{=}[d]  \\
H^3(M)  \ar@{^{(}->}[r]^{\pi^\ast \  \  \  \ }         & H^3(P_{Spin}^k(V, \xi)) \ar[r]^{ i^\ast \ \ \ \ \ }  \ar@{->>}[u]_{\Theta^\ast}  
&  H^4(BSpin)\cong H^3(Spin) \ar[r]^{ \ \ \ \ \ \ \ \ \ \  \widetilde{\delta}} \ar[u]_{\lambda_{2k+1}^\ast}^{\cong}       & H^4(M),
}
\end{aligned}
\label{distinctstringcstrongdia}
\end{gather}
where $\widetilde{\delta}_{k}$ is defined by $\widetilde{\delta}_{k} (\mu_3)= q_1(V)-kc_1(\xi)^2$, and the third row is exact by applying the dual Blakers-Massey theorem (Theorem \ref{dualBM}) to the lower right part of Diagram (\ref{spinspincrelationweak})
\[
P_{Spin}^k(V,\xi)\stackrel{\pi}{\rightarrow} M\stackrel{h}{\rightarrow}BSpin(n-4k-2)
\]
(notice that $BSpin(n-4k-2)$ is $3$-connected and $P_{Spin}^k(V, \xi)$ is connected). Here 
\[
q_1(V)=g^\ast(q_4): H^4(M)\leftarrow H^4(BSpin^c(n))
\]
is the characteristic class defined by universal class $q_4$. It is easy to see that the second row of the diagram is exact (this gives a second proof for the first claim). 
Notice that ${\rm Ker}\Theta^\ast\subseteq H^3(M)$ and 
the first morphism $\pi^\ast$ in the second row has ${\rm Ker}\Theta^\ast$ as its kernel. Hence the distinct String$^c$ structures on $V$ are classified by 
\[
{\rm Ker}i^\ast\cong {\rm Im}\pi^\ast\cong H^3(M)/{\rm Ker}\Theta^\ast.
\]
On the other hand, there is a bundle morphism
\begin{gather}
\begin{aligned}
\xymatrix{
Spin(n) \ar[r]  \ar[d] &  P_{Spin^c}(V) \ar[r]^{\ \ \ \pi}  \ar@{=}[d]  &  S(\xi)  \ar[r]^{\tilde{g} \ \ \ \ }  \ar[d]^{\rho} & BSpin(n) \ar[d]\\
Spin^c(n) \ar[r]        &  P_{Spin^c}(V) \ar[r]^{\ \ \ \pi}                   &  M \ar[r]^{g \ \ \ \ }                     & BSpin^c(n),
}
\end{aligned}
\label{spinspinclinedia}
\end{gather}
where the existence of lifting $\tilde{g}$ is due to the vanishing of the second Stiefel-Whitney class of $S(\xi)$. This diagram then induces a commutative diagram of cohomology groups
\begin{gather}
\begin{aligned}
\xymatrix{
0=H^2(Spin(n)) \ar[r] & H^3(S(\xi)) \ar[r] ^{\pi^\ast\ \ \ } & H^3(P_{Spin^c}(V)) \ar@{=}[d]  \\
                          &   H^3(M)  \ar[r]^{\pi^\ast\ \ \ }  \ar[u]^{\rho^\ast } & H^3(P_{Spin^c}(V)), 
}
\end{aligned}
\label{spinspinclinehodia}
\end{gather}
where the first row is exact again by Theorem \ref{dualBM}.
Hence ${\rm Im}\pi^\ast\cong {\rm Im}\rho^\ast$ and the proof of the theorem is completed.
\end{proof}

There are some cases when $\rho^\ast$ are surjective or injective.
\begin{corollary}\label{rhosurjstringccor}
Let $(V,\xi)$ as in Theorem \ref{strongstringcgeothm}. Then 
\begin{itemize}
\item[(1).] if the cup product by $c_1(\xi)$
\[
\cup c_1(\xi): H^2(M;\mathbb{Z})\rightarrow H^4(M;\mathbb{Z})
\]
is injective, then $\rho^\ast$ is surjective. In particular, the strong String$^c$ structures of level $2k+1$ on $V$ are in one-to-one correspondence with elements of $H^3(S(\xi))$;
\item[(2).]
if the fundamental group $\pi_1(M)$ is a torsion group (e.g., when $M$ is simply connected), then $\rho^\ast$ is injective. In particular, the strong String$^c$ structures of level $2k+1$ on $V$ are in one-to-one correspondence with elements of $H^3(M)$.
\end{itemize}
\end{corollary}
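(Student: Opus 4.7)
The plan is to obtain both statements as immediate consequences of the Gysin long exact sequence of the oriented circle bundle $\rho: S(\xi)\to M$, combined with the classification already proved in Theorem \ref{strongstringcgeothm}. Since $S(\xi)$ is the unit circle bundle of the complex line bundle $\xi$, its Euler class is $c_1(\xi)$, so the Gysin sequence reads
\[
\cdots\to H^{n-2}(M)\xrightarrow{\cup c_1(\xi)} H^{n}(M)\xrightarrow{\rho^\ast} H^{n}(S(\xi))\xrightarrow{\rho_\ast} H^{n-1}(M)\xrightarrow{\cup c_1(\xi)} H^{n+1}(M)\to\cdots,
\]
where $\rho_\ast$ denotes integration along the fibre.

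For part (1), I would extract the segment
\[
H^3(S(\xi))\xrightarrow{\rho_\ast} H^2(M)\xrightarrow{\cup c_1(\xi)} H^4(M)
\]
and observe that $\rho^\ast: H^3(M)\to H^3(S(\xi))$ is surjective precisely when $\rho_\ast$ vanishes on $H^3(S(\xi))$, which by exactness at $H^2(M)$ is equivalent to $\cup c_1(\xi): H^2(M)\to H^4(M)$ being injective. Under the assumed hypothesis, Theorem \ref{strongstringcgeothm} then identifies the level $2k+1$ strong String$^c$ structures on $V$ with $H^3(S(\xi))$.

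For part (2), I read off from the segment
\[
H^1(M)\xrightarrow{\cup c_1(\xi)} H^3(M)\xrightarrow{\rho^\ast} H^3(S(\xi))
\]
that $\rho^\ast$ is injective if and only if $\cup c_1(\xi): H^1(M)\to H^3(M)$ is the zero map. When $\pi_1(M)$ is a torsion group, the universal coefficient theorem gives
\[
H^1(M;\mathbb{Z})\cong \mathrm{Hom}(H_1(M;\mathbb{Z}),\mathbb{Z})\cong \mathrm{Hom}(\pi_1(M)^{\mathrm{ab}},\mathbb{Z})=0,
\]
so the map is trivially zero and $\rho^\ast$ is injective. A second application of Theorem \ref{strongstringcgeothm} then classifies the strong String$^c$ structures by $H^3(M)$ itself.

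There is no substantive obstacle here: the corollary is essentially a direct exactness reading of the Gysin sequence, once one invokes the classification already established in Theorem \ref{strongstringcgeothm}. The only nontrivial input beyond bookkeeping is the standard identification of the Euler class of $S(\xi)\to M$ with $c_1(\xi)$, which is immediate from the construction of $S(\xi)$ as the unit circle bundle of $\xi$.
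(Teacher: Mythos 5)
Your proof is correct and follows essentially the same route as the paper: both arguments read off surjectivity and injectivity of $\rho^\ast$ from exactness of the Gysin sequence of the circle bundle $S(\xi)\to M$ (whose Euler class is $c_1(\xi)$), and then invoke Theorem \ref{strongstringcgeothm} to identify the set of structures with $\operatorname{Im}\rho^\ast$. The only cosmetic difference is that you spell out via the universal coefficient theorem why a torsion $\pi_1(M)$ forces $H^1(M;\mathbb{Z})=0$, which the paper states without elaboration.
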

\begin{proof}
Let us look at the Gysin sequence of the line bundle $\xi$
\[
\cdots \rightarrow H^1(M)\stackrel{\cup c_1(\xi)}{\rightarrow} H^3(M)\stackrel{\rho^\ast}{\rightarrow} H^3(S(\xi))\stackrel{\delta}{\rightarrow} H^2(M)\stackrel{\cup c_1(\xi)}{\rightarrow}H^4(M)\rightarrow \cdots.
\]
For Case $(1)$, in the exact sequence the second cup product $\cup c_1(\xi)$ is injective, which implies that $\delta$ is trivial. Hence $\rho^\ast$ is surjective, and by Theorem \ref{strongstringcgeothm} the String$^c$ structures on $V$ are classified by ${\rm Im}~\rho^\ast=H^3(S(\xi))$. 

For Case $(2)$, the condition on the fundamental group of $M$ is equivalent to that $H^1(M)=0$. Then from the Gysin sequence above we see that $\rho^\ast$ is injective, and again by Theorem \ref{strongstringcgeothm} the String$^c$ structures on $V$ are classified by ${\rm Im}~\rho^\ast\cong H^3(M)$. 
\end{proof}

\section{Weak String$^c$-structures}\label{weakstringcsec}
\noindent Motivated by the way that (weak) String structures can be studied in terms of Spin structures on loop spaces, we define String$^c$-structures in terms of Spin$^c$-structures on loop spaces, which in general is weaker than the notion of String$^c$ defined in Section \ref{strongstringcsec}.

Let $(V,\xi)$ be the Spin$^c$-bundle defined in Section \ref{strongstringcsec}. By applying free loop functor to (\ref{spincframe}), we get a principal fibre bundle 
\begin{equation}\label{Lspincframe}
LSpin^c(n)\stackrel{Li}{\rightarrow} LP_{Spin^c}(V)\stackrel{L\pi}{\rightarrow} LM
\end{equation}
classified by $Lg: LM\rightarrow BLSpin^c(n)$. In particular, we may define the \textit{$LSpin^c$ characteristic classes} of $M$ as the pullbacks of the elements of $H^\ast(BLSpin^c(n))$ in $H^\ast(LM)$ through $Lg$. In low degrees, let us denote by $s=s(L\xi)$, $c=c_1(\xi)$, $\mu_1(V)=\mu_1(V,\xi)$, $q_1(V)=q_1(V,\xi)$ and $p_1(V)$ the $LSpin^c$-classes of $LV$ corresponding to the universal classes $s_1$, $t_2$, $\mu_3$, $q_4$ and $p_1$ respectively. We then notice that $c_1(\xi)$ and $p_1(V)$ correspond to usual Euler class of $\xi$ and the first Pontryagin class of $V$ respectively via the projection $p$ in the loop fibration (\ref{generalfreefib}), which justifies our notations.

Throughout the remanning part of this section, let us assume $x_2\in H^2(LSpin^c(n))$ is always chosen from the $0$-th component $H^2(L_0Spin^c(n))$. Recall that the cohomology suspension $\sigma^\ast$ is trivial on decomposable elements and $\sigma^\ast(\mu_3)=x_2$ (Lemma \ref{suspenvalue}; also see Appendix \hyperref[AppendixB]{B}).

\begin{definition}\label{stringcdefLG}
Let $V$ be an $n$-dimensional Spin$^c$-vector bundle over a manifold $M$ with a complex determinant line bundle $\xi$. $V$ is said to be {\em level $2k+1$ weak $String^c$} if the obstruction class 
\[
\delta_k(x_2)=\mu_1(V)-(2k+1)sc
\]
vanishes, where $\delta_k$ is the composition 
\begin{equation}\label{deltadef}
H^2(LSpin^c(n);\mathbb{Z})\stackrel{s_k}{\rightarrow} H^3(BLSpin^c(n);\mathbb{Z})\stackrel{(Lg)^\ast}{\rightarrow} H^3(LM;\mathbb{Z}),
\end{equation}
and $s_k$ is a section of cohomology suspension $\sigma^\ast: H^3(BLSpin^c(n);\mathbb{Z})\rightarrow H^2(LSpin^c(n);\mathbb{Z})$ defined by $s_k(x_2)=\mu_3-(2k+1)s_1t_2$ for each integer $k\in \mathbb{Z}$.
\end{definition}
This definition of weak String$^c$-structures also has geometric explanations. By using the group and bundle embeddings (e.g., Diagram (\ref{diaspincemb}), Diagram (\ref{spinspincrelationweak})) constructed in Section \ref{strongstringcsec}, we want to construct a commutative diagram (when $k<0$)
\begin{gather}
\begin{aligned}
\xymatrix{
H^2(L_0Spin^c(n)) \ar[r]^{s_k}   & H^3(BLSpin^c(n)) \ar[r]^{\ \ \ \ \  \ (Lg)^\ast}  & H^3(LM) \ar@{=}[d] \\
H^2(LSpin(n-4k-2)) \ar[r]^{\tau} \ar[u]^{\cong}_{(L\lambda_{2k+1})^\ast} & H^3(BLSpin(n-4k-2)) \ar[u]_{(BL\lambda_{2k+1})^\ast} \ar[r]^{\ \ \ \ \ \ \ \ \ \ \ \ (Lh)^\ast}   & H^3(LM).
}
\end{aligned}
\label{Hspinspincrelationweakobs}
\end{gather}
For this purpose, firstly apply free loop functor to Diagram (\ref{diaspincemb}), and denote $\phi=\chi_{2k+1}\circ ({\rm id}_{SO(n)}\times \Delta_{-2k-1})$. Recall that (\ref{Bgeoobstruction0})
\[
(B\phi)^\ast(p_1)=p_1-(2k+1)t_2^2,   \  \ (B\lambda_{2k+1})^\ast(q_4)=q_4-kt_2^2.
\]
Then by the naturality of the free suspension $\nu$ and Lemma \ref{suspenvalue}, the homomorphism
\[
(BL\lambda_{2k+1})^\ast: H^{3}(BLSpin(n+4k+2))\rightarrow H^3(BLSpin^c(n))
\]
satisfies
\begin{equation}\label{geoobstruction}
(BL\lambda_{2k+1})^\ast(\mu_3)=\mu_3-(2k+1)s_1t_2=s_k(x_2).
\end{equation}
Similarly, by applying cohomology suspensions for the both sides of (\ref{geoobstruction}), we obtain 
\begin{equation}\label{Llambdah2}
(L\lambda_{2k+1})^\ast(x_2)=x_2.
\end{equation}
Combining (\ref{geoobstruction}), (\ref{Llambdah2}) and the fact $\tau(x_2)=\mu_3$ for the transgression homomorphism, we see that the left square of Diagram (\ref{Hspinspincrelationweakobs}) commutes. The right square of Diagram (\ref{Hspinspincrelationweakobs}) is natural by applying loop functor $L$ to Diagram (\ref{spinspincrelationweak}).

We have showed the commutativity of Diagram (\ref{Hspinspincrelationweakobs}) when $k<0$, while the case when $k\geq 0$ can be done similarly. From the diagram, we notice that the composition of the morphisms in the first row is the defined String$^c$-obstruction $\delta_k$, while the composition $\delta$ of those in the second row is the obstruction to the existence of String structure on the bundle $V\oplus \xi^{\oplus (-2k-1)}$ from the point of view of loop spaces. Indeed, by observing the Serre spectral sequence of the spin bundle (\ref{kspinframe}) after looping (or simply applying the dual Blakers-Massey Theorem), the second row of Diagram (\ref{Hspinspincrelationweakobs}) can be fitted into an exact sequence 
\begin{equation}\label{spinstringweakn+4k+2}
0\rightarrow H^2(LM)\stackrel{(L\pi)^\ast}{\rightarrow} H^2(LP_{Spin}^k(V,\xi)) \stackrel{(Li)^\ast}{\rightarrow} H^2(LSpin(n-4k-2)) \stackrel{\delta}{\rightarrow} H^3(LM). 
\end{equation}
As in \cite{Mclau92}, $x_2\in H^2(LSpin(n-4k-2))\cong [LSpin(n-4k-2), BS^1]$ corresponds to the universal central extension $L\widehat{Spin}(n-4k-2)$ of $LSpin(n-4k-2)$ by $S^1$. If $\delta(x_2)=0$ the exactness of the above sequence implies that the structural  group of $LP_{Spin}^k(V, \xi)\rightarrow LM$ can be lifted to $L\widehat{Spin}(n-4k-2)$, which, by definition, assures a \textit{weak String structure} on $P_{Spin}^k(V, \xi)$ (cf. \cite{K87, Mclau92, Wal16}). Hence, a level $2k+1$ weak String$^c$-structure on $M$ induces a (weak) String structure on a larger bundle $V\oplus \xi^{\oplus (-2k-1)}$ over $M$.

Hence from the geometric explanation, we can interpret the String$^c$-structures in terms of liftings of structural groups. Indeed, we can define $L\widehat{Spin^c_k}(n)$ for $k<0$ by the morphism of groups extensions
\begin{gather}
\begin{aligned}
\xymatrix{
\{1\} \ar[r]   &  U(1) \ar[r] \ar@{=}[d] & L\widehat{Spin^c_k}(n) \ar[r]^{q_{2k+1}} \ar[d]^{L\widehat{\lambda_{2k+1}}} & LSpin^c(n) \ar[r] \ar[d]^{L\lambda_{2k+1}} & \{1\} \\
\{1\} \ar[r]            &  U(1) \ar[r]                  & L\widehat{Spin}(n-4k-2) \ar[r]^{q}            & LSpin(n-4k-2) \ar[r] & \{1\}, 
}
\end{aligned}
\label{Lgroupextmordia}
\end{gather}
where the bottom row is the universal extension of $LSpin$, $\lambda_{2k+1}$ is the group embedding defined in Diagram \ref{diaspincemb}, and the right square is a pullback defining the homomorphisms $L\widehat{\lambda_{2k+1}}$ and $q_{2k+1}$. Recall that by (\ref{geoobstruction}), after applying the classifying functor $B$ to (\ref{Lgroupextmordia}) the universal obstruction class $\mu_3$ of weak String structures will be sent to the universal obstruction class $s_k(x_2)$ of weak String$^c$ structures via $(BL\lambda_{2k+1})^\ast$. In particular, for the looped classifying map $Lg$ of the Spin$^c$ principal bundle of $V$ (\ref{Lspincframe}), it can be lifted to $BL\widehat{Spin^c_k}(n)$ if and only if the composition $Lh=BL\lambda_{2k+1} \circ Lg$, as the classifying map of $LP_{Spin}^k(V,\xi)$, can be lifted to $L\widehat{Spin}(n-4k-2)$.

In contrast, when $k\geq 0$ since we only have a loop map fitting into Diagram \ref{diaspincembk+}, we cannot construct the morphism of group extension as in Diagram \ref{Lgroupextmordia}, but instead we can formulate a homotopy commutative diagram of fibrations
\begin{gather}
\begin{aligned}
\xymatrix{
BS^1\ar[r] \ar[r] \ar@{=}[d] & BL\widehat{Spin^c_k}(n) \ar[r]^{\mathfrak{q}_{2k+1}} 
\ar[d]^{L\widehat{\Lambda_{2k+1}}} & BLSpin^c(n) \ar[d]^{BL\lambda_{2k+1}}  \\
BS^1 \ar[r]                  & BL\widehat{Spin}(N) \ar[r]^{Bq}            & BLSpin(N), 
}
\end{aligned}
\label{Lgroupextmor+dia}
\end{gather}
where $N$ is sufficiently large, and $BL\widehat{Spin^c_k}(n)$ is just a topological space as the homotopy pullback of the right square at this moment. Nevertheless, we can justify that it can be chosen as the classifying space of some topological group $L\widehat{Spin^c_k}(n)$ analogous to the arguments of constructing the String$^c$ groups of negative levels in Subsection \ref{stringcgroupsubsec}. Indeed, first we can take the homotopy pullback of $q$ and $L\lambda_{2k+1}$ to obtain a space $L\widehat{Spin^c_k}(n)$. Moreover notice that the maps $Bq$ and $BL\lambda_{2k+1}$ induce the morphisms of the universal fibrations of involved classifying spaces respectively, we indeed have a homotopy fibration
\[
L\widehat{Spin^c_k}(n) \rightarrow \ast \rightarrow BL\widehat{Spin^c_k}(n).
\]
Hence the space $L\widehat{Spin^c_k}(n)$ can be chosen to the Moore loop space corresponding to the loop space $\Omega BL\widehat{Spin^c_k}(n)$ as we did for $String^c_k(n)$ when $k\geq0$, and then $B\Omega BL\widehat{Spin^c_k}(n)\simeq BL\widehat{Spin^c_k}(n)$. Let $q_{2k+1}=\Omega \mathfrak{q}_{2k+1}$ and $L\widehat{\lambda_{2k+1}}=\Omega L\widehat{\Lambda_{2k+1}}$. Then we can re-choose $Bq_{2k+1}$ to be $\mathfrak{q}_{2k+1}$ and $BL\widehat{\lambda_{2k+1}}$ to be $L\widehat{\Lambda_{2k+1}}$.
 To summarize, when $k\geq 0$, we have constructed a homotopy commutative diagram similar to Diagram \ref{Lgroupextmordia} where $L\widehat{Spin^c_k}(n)$ is a topological group and the maps $q_{2k+1}$ and $L\widehat{\lambda_{2k+1}}$ are loop maps.

\begin{theorem}\label{weakstringcthm}
Let $V$ be an $n$-dimensional Spin$^c$-vector bundle over a manifold $M$ with a complex determinant line bundle $\xi$. 
$V$ admits a weak String$^c$-structure if and only if the stable Spin bundle associated to $V\oplus \xi^{\oplus (-2k-1)}$ admits a weak String structure for some $k\in \mathbb{Z}$.
Furthermore, when $k<0$, a weak String$^c$-structure of level $2k+1$ on $V$ is also equivalent to, for the associated LSpin$^c$-bundle $LP_{Spin^c}(V)$ over $LM$, a structural group lifting to $L\widehat{Spin^c_k}(n)$.

Suppose the obstruction class $\mu_3(V)-(2k+1)sc=0$, then the weak String$^c$-structures of level $2k+1$ on $(V,\xi)$ are in one-to-one correspondence with the elements in the image of the morphism
\[
(L\rho)^\ast: H^2(LM)\rightarrow H^2(LS(\xi)).
\]
\end{theorem}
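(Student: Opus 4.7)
The plan is to mirror the proof of Theorem \ref{strongstringcgeothm}, transporting every ingredient through the free loop functor. The key structural pieces, namely Diagrams (\ref{Hspinspincrelationweakobs}) and (\ref{Lgroupextmordia}) together with the exact sequence (\ref{spinstringweakn+4k+2}), have already been assembled in the preceding discussion; the theorem consolidates them into three assertions.

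For the first equivalence, the crucial observation is the identity $(BL\lambda_{2k+1})^\ast(\mu_3) = s_k(x_2)$ from (\ref{geoobstruction}), which shows that the weak String$^c$ obstruction $\delta_k(x_2) = \mu_1(V) - (2k+1)sc$ on $LM$ coincides with the pullback along $Lh$ of the universal weak String obstruction on the bundle $P_{Spin}^k(V,\xi)$. Exactness of (\ref{spinstringweakn+4k+2}) then translates vanishing of one obstruction into vanishing of the other. For the lifting characterization when $k<0$, apply the classifying functor $B$ to the right-hand pullback square in Diagram (\ref{Lgroupextmordia}) to obtain a homotopy pullback of classifying spaces; its universal property forces the existence of a lift of $Lg: LM \to BLSpin^c(n)$ to $BL\widehat{Spin^c_k}(n)$ to be equivalent to the existence of a lift of $Lh = BL\lambda_{2k+1} \circ Lg$ to $BL\widehat{Spin}(n-4k-2)$, which is the standard group-theoretic formulation of a weak String structure on $V \oplus \xi^{\oplus (-2k-1)}$.

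For the counting, the idea is to copy the strategy of Theorem \ref{strongstringcgeothm} one loop below. First apply Theorem \ref{dualBM} to the looped principal bundle $L_0Spin^c(n) \to LP_{Spin^c}(V) \to LM$ restricted to the $0$-component, and in parallel to the looped Spin frame bundle of $V \oplus \xi^{\oplus (-2k-1)}$. This yields the loop analogue in $H^2$ of the top two rows of Diagram (\ref{distinctstringcstrongdia}), with (\ref{spinstringweakn+4k+2}) furnishing the exact bottom row. Naturality under the looped version of Diagram (\ref{spinspincrelationweak}) glues the rows together via the pullback along $L\Theta_{2k+1}$, and a diagram chase identifies the distinct level $2k+1$ weak String$^c$ structures with $\mathrm{Im}\,(L\pi)^\ast \subseteq H^2(LP_{Spin^c}(V))$. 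A second application of Theorem \ref{dualBM} to the looped version of Diagram (\ref{spinspinclinedia}), combined with $H^1(LSpin(n)) = 0$, then factors $(L\pi)^\ast$ through $(L\rho)^\ast$ and identifies its image with $\mathrm{Im}\,(L\rho)^\ast \subseteq H^2(LS(\xi))$.

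The main technical obstacle lies in the component decomposition of the loop groups. Unlike in the non-loop setting, $LSpin^c(n)$ has infinitely many components indexed by $\mathbb{Z} \cong \pi_0(LS^1)$, and the universal classes $x_2$, $\mu_3$ and $s_1 t_2$ used throughout live most naturally on the $0$-component $L_0 Spin^c(n)$ (see Lemma \ref{suspenvalue} and Table \ref{table1}). One must therefore work consistently with the $0$-th components of the various looped total spaces and verify that the connectivity estimates required for Theorem \ref{dualBM} continue to hold in the low cohomological degrees relevant here.
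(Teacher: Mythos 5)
Your proposal follows essentially the same route as the paper: the first two assertions are obtained exactly as you describe from (\ref{geoobstruction}), the exact sequence (\ref{spinstringweakn+4k+2}), and the homotopy pullback square in Diagram (\ref{Lgroupextmordia}); and the counting argument is the paper's Diagram (\ref{distinctstringcweakdia}) followed by the looped version of Diagram (\ref{spinspinclinedia}) with $H^1(LSpin(n))=0$, yielding $\mathrm{Im}\,(L\pi)^\ast\cong\mathrm{Im}\,(L\rho)^\ast$. Your caution about the components of $LSpin^c(n)$ is handled in the paper simply by the standing convention that $x_2$ is taken in the $0$-th component, so no substantive difference arises.
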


\begin{proof}
First notice that we have proved the first two statements in the previous discussions. 
We now prove the last statement of the theorem on counting the distinct String$^c$ structures for $k< 0$ while the proof for $k\geq0$ is similar. And the proof is similar to that of Theorem \ref{strongstringcgeothm}.
By Diagram (\ref{spinspincrelationweak}) after looping, we can construct a commutative diagram 
\begin{gather}
\begin{aligned}
\xymatrix{
H^2(LM)  \ar[r]^{(L\pi)^\ast \ \ \ \ } \ar@{=}[d]    & H^2(LP_{Spin^c}(V)) \ar[r]^{(Li)^\ast}  & H^2(LSpin^c(n)) \ar@{=}[d] \ar[r]^{ \ \ \ \   \delta_k} &
H^3(LM) \ar@{=}[d]  \\
H^2(LM)  \ar[r]^{(L\pi)^\ast} \ar@{=}[d]    & {\rm Im}~(L\Theta)^\ast \ar[r]^{(Li)^\ast \ \ \ } \ar@{^{(}->}[u] & H^2(LSpin^c(n)) \ar[r]^{\ \ \ \   \delta_k  } &
H^3(LM) \ar@{=}[d]  \\
H^2(LM)  \ar@{^{(}->}[r]^{(L\pi)^\ast \  \  \  \ }         & H^2(LP_{Spin}^k(V, \xi)) \ar[r]^{(Li)^\ast \ \ }  \ar@{->>}[u]_{(L\Theta)^\ast}  
&  H^2(LSpin(n-4k-2)) \ar[r]^{\ \ \ \ \  \  \ \ \delta} \ar[u]_{(L\lambda_{2k+1})^\ast}^{\cong}       & H^3(LM),
}
\end{aligned}
\label{distinctstringcweakdia}
\end{gather}
where the third row is exact. It is easy to see that the second row of the diagram is exact. Now notice that ${\rm Ker}(L\Theta)^\ast\subseteq H^2(LM)$ and 
the first morphism $(L\pi)^\ast$ in the second row has ${\rm Ker}(L\Theta)^\ast$ as its kernel. 
Hence the distinct String$^c$ structures on $V$ are classified by 
\[
{\rm Ker}(Li)^\ast\cong  H^2(LM)/{\rm Ker}(L\Theta)^\ast\cong {\rm Im}((L\pi)^\ast: H^2(LM)\rightarrow H^2(LP_{Spin^c}(V)).
\]
On the other hand, by considering the bundle morphism in Diagram (\ref{spinspinclinedia}) after looping, we obtain the commutative diagram of cohomology groups
\begin{gather}
\begin{aligned}
\xymatrix{
0=H^1(LSpin(n)) \ar[r] & H^2(LS(\xi)) \ar[r] ^{(L\pi)^\ast\ \ \ } & H^2(LP_{Spin^c}(V)) \ar@{=}[d]  \\
                          &   H^2(LM)  \ar[r]^{(L\pi)^\ast\ \ \ }  \ar[u]^{(L\rho)^\ast } & H^2(LP_{Spin^c}(V)), 
}
\end{aligned}
\label{Lspinspinclinedia}
\end{gather}
where the first row is exact again by Theorem \ref{dualBM}.
Then ${\rm Im}(L\pi)^\ast\cong {\rm Im}(L\rho)^\ast$ and the proof of the theorem is completed.
\end{proof}

\begin{remark}
Notice when $k\geq 0$, we cannot talk about lifting the structural groups of the looped Spin$^c$ principal bundles. However, it is true that $V$ admits a weak String$^c$ structure of level $2k+1$ if and only if there is a lift
\begin{gather}
\begin{aligned}
\xymatrix{
& BL\widehat{Spin^c_k}(n) \ar[d]  \\
LM \ar[r]_{ Lg \ \ \ \ \ \ } \ar@{-->}[ru]  & BLSpin^c(n).
}
\end{aligned}
\label{stringcliftLdiag}
\end{gather}
\end{remark}

\begin{corollary}\label{rhosurjweakstringccor}
Let $(V,\xi)$ as in Theorem \ref{weakstringcthm}. Suppose that $M$ is simply connected, and $c_1(\xi)$ is a generator element of $H^2(M)$,
then $(L\rho)^\ast: H^2(LM)\rightarrow H^2(LS(\xi))$ is surjective. In particular, the weak String$^c$ structures of level $2k+1$ on $V$ are in one-to-one correspondence with elements of $H^2(LS(\xi))$.
\end{corollary}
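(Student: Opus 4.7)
The plan is to reduce surjectivity of $(L\rho)^\ast$ in degree two to surjectivity of $(\Omega\rho)^\ast\colon H^2(\Omega M)\to H^2(\Omega S(\xi))$ on based loop spaces, and then to obtain the latter from the $H$-space structure of $\Omega M$. First I would extract the topological consequences of the hypothesis. From the Gysin sequence of the circle bundle $S^1\to S(\xi)\to M$, cup product by the generator $c_1(\xi)$ gives an isomorphism $H^0(M)\xrightarrow{\cong}H^2(M)$, which together with $H^1(M)=0$ forces $H^1(S(\xi))=H^2(S(\xi))=0$. Running the long exact homotopy sequence of the same bundle, the connecting map $\pi_2(M)\to\pi_1(S^1)=\mathbb{Z}$ is pairing with $c_1(\xi)$, hence surjective; combined with $\pi_1(M)=0$ and the Hurewicz theorem this yields $\pi_1(S(\xi))=0$ and $\pi_2(S(\xi))=0$, so $S(\xi)$ is 2-connected and $\Omega S(\xi)$ is simply connected.

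Next, I would run the Serre spectral sequence of the free-loop fibration $\Omega S(\xi)\to LS(\xi)\to S(\xi)$. Since $H^1(S(\xi))=H^2(S(\xi))=0$, only the column $E_2^{0,2}=H^2(\Omega S(\xi))$ contributes in total degree two, and the differential $d_3\colon E_3^{0,2}\to E_3^{3,0}=H^3(S(\xi))$ is forced to vanish by the constant-loop section $s\colon S(\xi)\to LS(\xi)$, which keeps $E_\infty^{3,0}$ full. Hence the fiber edge map gives an isomorphism $H^2(LS(\xi))\cong H^2(\Omega S(\xi))$. The analogous spectral sequence for $\Omega M\to LM\to M$, mapped compatibly by $(L\rho)^\ast$, sends the $H^2(M)$-summand into $H^2(S(\xi))=0$, so surjectivity of $(L\rho)^\ast$ reduces to surjectivity of $(\Omega\rho)^\ast\colon H^2(\Omega M)\to H^2(\Omega S(\xi))$.

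For this last step, looping the circle bundle produces a $\mathbb{Z}$-covering $\Omega S(\xi)\to\Omega M$, and since $\Omega S(\xi)$ is simply connected while $\pi_1(\Omega M)=\pi_2(M)\cong\mathbb{Z}$ under the hypothesis, this is the universal covering. The key observation is that $\Omega M$ can be realised as a connected topological group via Moore loops, so its universal cover $\Omega S(\xi)$ is again a connected topological group; the deck action of $\pi_1(\Omega M)$ is by left translation by central elements, each of which is homotopic to the identity through a path in the connected group $\Omega S(\xi)$, and therefore acts trivially on $H^\ast(\Omega S(\xi))$. The Serre spectral sequence of $\Omega S(\xi)\to\Omega M\to K(\mathbb{Z},1)\simeq S^1$ then collapses in total degree two, since $H^1(\Omega S(\xi))=0$ and $H^p(S^1)=0$ for $p\geq 2$, yielding $(\Omega\rho)^\ast$ as an isomorphism. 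The main technical delicacy is the triviality of this deck action on cohomology; the rest is routine spectral-sequence bookkeeping.
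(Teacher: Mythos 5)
Your overall route is the same as the paper's: use the Serre spectral sequences of the two free loop fibrations to reduce surjectivity of $(L\rho)^\ast$ on $H^2$ to surjectivity of $(\Omega\rho)^\ast\colon H^2(\Omega M)\to H^2(\Omega S(\xi))$, and then obtain the latter from the looped circle bundle. For that last part the paper splits $\Omega M\simeq S^1\times\Omega S(\xi)$ by choosing a class in $\pi_2(M)$ pairing to $1$ with $c_1(\xi)$; your covering-space argument reaches the same conclusion and is acceptable, except that the intermediate claims $\pi_2(S(\xi))=0$ and $\pi_1(\Omega M)\cong\Z$ are not forced by the hypotheses ($H_2(M)$ may contain torsion, so $\Omega S(\xi)\to\Omega M$ need not be the universal cover). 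What you actually need --- that $H^1(\Omega S(\xi))=\mathrm{Hom}(\pi_2(S(\xi)),\Z)=0$ and that the $\Z$-deck action is trivial on cohomology --- does survive, since $\pi_2(S(\xi))$ is the kernel of a surjection from a rank-one group onto $\Z$ and hence is torsion.

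The genuine gap is in the reduction step, and it is shared with the paper's own proof. Your square compares the fibre restrictions $i^\ast\colon H^2(LM)\to H^2(\Omega M)$ and $i^\ast\colon H^2(LS(\xi))\to H^2(\Omega S(\xi))$; since the second is an isomorphism, surjectivity of $(L\rho)^\ast$ is equivalent to surjectivity of $(\Omega\rho)^\ast\circ i^\ast$, which requires not only that $(\Omega\rho)^\ast$ be onto but also that $i^\ast\colon H^2(LM)\to H^2(\Omega M)$ be onto. That is precisely the vanishing of $d_2\colon E_2^{0,2}=H^2(\Omega M)\to E_2^{2,1}=H^2(M;H^1(\Omega M))$ in the free loop spectral sequence of $M$, and the constant-loop section only kills differentials landing in the bottom row, not this one. (The paper asserts the short exact sequence $0\to H^2(M)\to H^2(LM)\to H^2(\Omega M)\to 0$ at the same point, with the same missing justification.) The step can genuinely fail: take $M=S^2$ with $V$ the underlying real plane bundle of $\mathcal{O}(1)$ and $\xi=\mathcal{O}(1)$, so that $S(\xi)=S^3$. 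Here $d_2$ is multiplication by $\pm 2$ from $H^2(\Omega S^2)\cong\Z$ to $H^2(S^2)\otimes H^1(\Omega S^2)\cong\Z$ (visible in the Vigu\'e-Poirrier--Sullivan model of $LS^2$, where $D\bar{y}=-2x\bar{x}$), so $H^2(LS^2)\cong\Z$ is generated by $p^\ast$ of the generator of $H^2(S^2)$ and restricts to zero on the fibre; consequently $(L\rho)^\ast\colon H^2(LS^2)\to H^2(LS^3)\cong H^2(\Omega S^3)\cong\Z$ is the zero map and is not surjective. So the statement needs an additional hypothesis controlling this $d_2$ (it is harmless when $H^2(\Omega M)=0$, e.g.\ for $M=\CC P^n$ with $n\geq 2$), and any correct proof must address that differential rather than appeal only to the existence of the constant-loop section.
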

\begin{proof}
We need to analyse the homotopy commutative diagram of fibrations
\begin{gather}
\begin{aligned}
\xymatrix{
\Omega S(\xi) \ar[r]^{\Omega\rho}  \ar[d]  & \Omega M \ar[r]^{\Omega c} \ar[d]  & S^1 \ar[d]  \\
LS(\xi) \ar[r]^{L\rho}  \ar[d]  & LM \ar[r]^{Lc\ \ \ \ \ } \ar[d]  & S^1\times K(\mathbb{Z},2) \ar[d]  \\
S(\xi) \ar[r]^{\rho}  & M \ar[r]^{c\ \ \ \ \ }   & K(\mathbb{Z},2)  \\
}
\end{aligned}
\label{looplinefibrationcorodia}
\end{gather}
using the Serre spectral sequences. 
First, from the Serre spectral sequence (or Gysin sequence) of the fibration in the third row of Diagram \ref{looplinefibrationcorodia} there is a short exact sequence
\begin{equation}\label{surjeh2rhocoro2eq}
0\rightarrow H^2(K(\mathbb{Z},2))\stackrel{c^\ast}{\rightarrow} H^2(M)\stackrel{\rho^\ast}{\rightarrow} H^2(S(\xi))\rightarrow 0.
\end{equation}
On the other hand, since $M$ is simply connected, 
\[
H^2(M)\cong {\rm Hom}(H_2(M), \mathbb{Z})\cong {\rm Hom}(\pi_2(M),\mathbb{Z})
\]
is torsion free and $(\Omega c)_\ast: \pi_1(\Omega M)\rightarrow \pi_1(S^1)$ is surjective. Then the fibration in the top row of Diagram (\ref{looplinefibrationcorodia}) splits 
\[
\Omega M \simeq S^1\times \Omega S(\xi),
\]
which particularly implies that $(\Omega \rho)^\ast: H^2(\Omega M)\rightarrow H^2(\Omega S(\xi))$ is surjective. 
Now since $c_1(\xi)$ is a generator element of $H^2(M)$ by assumption, $S(\xi)$ is simply connected. We then can consider Serre spectral sequences of the fibrations in the first two columns of Diagram (\ref{looplinefibrationcorodia}). By the naturality of Serre spectral sequences and the fact that loop projection induces monomorphism on cohomology, we have the indued morphism of short exact sequences
\begin{gather}
\begin{aligned}
\xymatrix{
0\ar[r]   & H^2(M)  \ar[r]  \ar@{->>}[d]^{\rho^\ast}   &  H^2(LM)\ar[r] \ar[d]^{(L\rho)^\ast}  & H^2(\Omega M) \ar[r]  \ar@{->>}[d]^{(\Omega \rho)^\ast}  & 0\\
0 \ar[r]  & H^2(S(\xi)) \ar[r]                           & H^2(LS(\xi)) \ar[r]             & H^2(\Omega S(\xi))  \ar[r] & 0.
}
\end{aligned}
\label{maploopfibserressdia}
\end{gather}
Since we have showed that $\rho^\ast$ and $(\Omega\rho)^\ast$ are surjective, we see that the middle morphism $(L\rho)^\ast$ in the diagram is also surjective by the (sharp) five lemma. Hence the corollary follows.
\end{proof}


\section{Relations between strong and weak String$^c$ structures}\label{strongweakrelationsec}
\noindent In this section, we discuss the relations between strong and weak String$^c$ structures, and also the fusion Spin$^c$ structures on looped manifolds and their relations with String$^c$ structures.
\subsection{Strong String$^c$ vs. Weak String$^c$}
The relations between strong String$^c$- and weak String$^c$-structures are characterized by the following theorems:
\begin{theorem}\label{compa2stringcthm}
Let $V$ be an $n$-dimensional Spin$^c$-vector bundle over $M$ with a complex determinant line bundle $\xi$. If $V$ is strong String$^c$ of level $2k+1$, then $V$ is level $2k+1$ weak String$^c$. The converse is also true, if the image of the cohomology of the classifying map 
\[g^\ast: H^4(BSpin^c(n);\mathbb{Z})\rightarrow H^4(M;\mathbb{Z})\]
is a subgroup of the dual of the Hurewicz image $h: \pi_4(M)\rightarrow H_4(M;\mathbb{Z})$, and the rational Hurewicz morphism
\[h\otimes \mathbb{Q}: \pi_3(LM)\otimes \mathbb{Q} \rightarrow H_3(LM;\mathbb{Q})\]
is injective.
\end{theorem}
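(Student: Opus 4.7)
The plan is to use the free suspension $\nu:H^4(M;\mathbb{Z})\to H^3(LM;\mathbb{Z})$ from Section \ref{suspsec} as the direct translator between the two obstructions. Writing the strong obstruction as $\alpha=g^\ast(q_4-kt_2^2)\in H^4(M;\mathbb{Z})$, where $g:M\to BSpin^c(n)$ classifies $P_{Spin^c}(V)$, Lemma \ref{suspenvalue} together with the derivation property of $\nu$ yields
\[
\nu(q_4-kt_2^2)=\nu(q_4)-k\,\nu(t_2^2)=(\mu_3-s_1 t_2)-2k\,s_1 t_2=\mu_3-(2k+1)s_1 t_2,
\]
which is exactly the universal weak String$^c$ class $s_k(x_2)$ in $H^3(BLSpin^c(n))$. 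By naturality of $\nu$ along $g$ we then get $\nu(\alpha)=(Lg)^\ast s_k(x_2)$, so the weak obstruction is precisely the free suspension of the strong one. This immediately gives the forward implication $\alpha=0\Rightarrow\nu(\alpha)=0$.

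For the converse, assume $\beta:=\nu(\alpha)=0$ in $H^3(LM;\mathbb{Z})$. Restricting along the fibre inclusion $i:\Omega M\hookrightarrow LM$ and applying property (1) of the free suspension ($i^\ast\circ\nu=\sigma^\ast$) produces $\sigma^\ast\alpha=0$ in $H^3(\Omega M;\mathbb{Z})$. The classical cohomology-suspension adjunction then implies that, for every $f\in\pi_4(M)$ with adjoint $\bar f\in\pi_3(\Omega M)$,
\[
\langle\alpha,h(f)\rangle_M=\langle\sigma^\ast\alpha,h(\bar f)\rangle_{\Omega M}=0,
\]
so $\alpha$ annihilates the Hurewicz image $h(\pi_4(M))\subseteq H_4(M;\mathbb{Z})$. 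Hypothesis (a), which places $\alpha\in g^\ast H^4(BSpin^c(n))$ inside the dual of $h(\pi_4(M))$, then forces $\alpha=0$ in $H^4(M;\mathbb{Z})$, i.e.\ $V$ is strong String$^c$ of level $2k+1$.

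Hypothesis (b) enters to guarantee that the above spherical-class detection can be transported cleanly between $LM$ and the base $M$. Via the constant-loop section of the loop fibration $\Omega M\to LM\to M$ (which yields $s^\ast\beta=0$ because ${\rm ev}\circ(1\times s)$ is the projection onto $M$), one obtains a splitting $\pi_3(LM)\cong\pi_3(M)\oplus\pi_4(M)$; hypothesis (b) ensures that the $\pi_4(M)$-summand injects into $H_3(LM;\mathbb{Q})$ under Hurewicz, so the pairing of $\beta$ with the spherical classes feeding the suspension--adjunction identity above is faithful. The main obstacle is precisely the calibration of (a) against (b): (b) produces rational detection of $\alpha$ via spherical classes in $LM$, while (a) upgrades that rational information to the integral vanishing of $\alpha$, and one must check that no torsion in $H_4(M;\mathbb{Z})$ escapes this combined detection so that the chain of pairings remains an equivalence integrally rather than merely rationally.
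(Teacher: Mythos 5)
Your forward direction coincides with the paper's: both compute $\nu(q_4-kt_2^2)=\mu_3-(2k+1)s_1t_2$ from Lemma \ref{suspenvalue} and the derivation property of $\nu$, and conclude by naturality along $g$. For the converse the underlying idea is also the same --- detect the strong obstruction $\alpha=g^\ast(q_4-kt_2^2)$ through its pairing with spherical classes, using the compatibility of the (free) suspension with the loop adjunction --- but you organize it differently. The paper works on $LM$: it writes $\alpha=\mathrm{dual}(h(f))$ using hypothesis (a), invokes the commutative square (\ref{hurewiczrelationdia}) to get $\nu(\alpha)=\mathrm{dual}(h(i(f)))$, and derives a contradiction; hypothesis (b) is exactly what guarantees that $h(i(f))$ is non-torsion in $H_3(LM;\mathbb{Z})$, so that its dual is nonzero. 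You instead restrict to the fibre $\Omega M$, use $i^\ast\circ\nu=\sigma^\ast$ and the integral adjunction identity $\langle\sigma^\ast\alpha,h(\bar f)\rangle=\langle\alpha,h(f)\rangle$ to conclude that $\alpha$ annihilates all of $h(\pi_4(M))$, and then close with (a). That chain is valid (at the same level of rigour as the paper's use of ``dual''), and it is arguably cleaner. The one thing to fix is your account of hypothesis (b): as you have set the argument up, (b) is never used --- the adjunction identity is a formal integral statement requiring no injectivity of any Hurewicz map --- so your paragraph assigning it a role is a misdiagnosis rather than a needed step, and the closing worry about ``torsion escaping detection'' does not arise in your formulation, since an element of the dual of the Hurewicz image is by construction either zero or detected integrally by its pairing with that image (the same implicit fact the paper uses when it says the dual of a torsion class vanishes). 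Either delete that paragraph and let your pairing argument stand on hypothesis (a) alone, or switch to the paper's route through $H_3(LM;\mathbb{Z})$, which is where (b) genuinely enters.
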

\begin{proof}
We use the free suspension $\nu$ to prove the theorem.
By Lemma \ref{suspenvalue}, for the universal case
\begin{equation*}
\nu\big(\frac{p_1-(2k+1)t_2^2}{2}\big)
= \nu(q_4 -kt_2^2)
= \mu_3-(2k+1)s_1t_2
=s_k(x_2).
\end{equation*}
By the naturality of $\nu$, we see that the obstructions to the weak and strong String$^c$-structures are connected via the equality
\begin{equation}\label{keyevalu}
\nu(\frac{p_1(V)-(2k+1)c^2}{2})=\mu_1(V)-(2k+1)sc.
\end{equation}
Hence the first claim of the theorem follows immediately. For the converse part of the theorem, we use the similar strategy used in the proof of Theorem $3.1$ in \cite{Mclau92} for the String case. The idea is to describe the free suspension $\nu$ geometrically at least for the elements in the Hurewicz image. Choose any $f\in \pi_4(M)$. $S^4$ can be covered by loops which meet only at one point (say the base point), and the parameter space for this set of loops is its equator $S^3$. By this view we obtain a class $g\in \pi_3(LM)$; indeed, this operation is equivalent to take the adjoint of $f$ to get $g\in \pi_3(\Omega M)$ and we notice that in general $\pi_\ast(LM)\cong\pi_\ast(M)\oplus \pi_\ast(\Omega M)$. In either way, this operation is the free supsension after taking the composition of Hurewicz map and the dual map, that is, we have the commutative diagram 
\begin{gather}
\begin{aligned}
\xymatrix{
\pi_4(M) \ar[r]^{h \ \  \ } \ar@{_{(}->}[d]^{i}  & H_4(M;\mathbb{Z})  \ar[r]^{{\rm dual}\ \ }  &H^4(M;\mathbb{Z}) \ar[d]^{\nu}  \\
\pi_3(LM) \ar[r]^{h   \ \ \ }                        & H_3(LM; \mathbb{Z}) \ar[r]^{{\rm dual}\ \ } & H^3(LM;\mathbb{Z}).
}
\end{aligned}
\label{hurewiczrelationdia}
\end{gather}
Now by assumption, the obstruction class $\frac{p_1(M)-(2k+1)c^2}{2}\in H^4(M)$ is from an element $f\in \pi_4(M)$. If $f$ is a torsion, then the dual of $h(f)$ will be $0$ (recall here dual is defined by the natural paring $H^\ast(M;\mathbb{Z})\times H_\ast(M;\mathbb{Z})\rightarrow \mathbb{Z}$). Otherwise $f$ is torsion free. Then by the above argument, we obtain an element $g=i(f)\in \pi_3(LM)$ such that $h(g)$ is non-zero by assumption. Take the dual of $h(g)$, we obtain the free suspension $\nu(\frac{p_1(M)-(2k+1)c^2}{2})$ which is non-zero. This is a contradiction, and then $\frac{p_1(M)-(2k+1)c^2}{2}=0$. The converse statement is proved.
\end{proof}

\begin{theorem}\label{compa2stringcthm2}
Let $(V,\xi)$ be as in Theorem \ref{compa2stringcthm}. Suppose $(V,\xi)$ is (strong) String$^c$ of level $2k+1$. Then the distinct strong String$^c$-structures lifting the original Spin$^c$-structure on $V$ transgress to the weak String$^c$-structures via the transgression $\nu$ 
\begin{gather}
\begin{aligned}
\xymatrix{
H^3(M) \ar[d]^{\nu}  \ar[r]^{\rho^\ast}   & H^3(S(\xi)) \ar[d]^{\nu}\\
H^2(LM) \ar[r]^{(L\rho)^\ast}                & H^2(LS(\xi)).
}
\end{aligned}
\label{countstrongweakstringcrelationdia}
\end{gather}
\end{theorem}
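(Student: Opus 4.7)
The plan is to deduce the commutativity of Diagram \ref{countstrongweakstringcrelationdia} directly from the naturality of the free suspension $\nu$ applied to the map $\rho: S(\xi)\to M$, and then re-interpret this commutativity in terms of the classification results already established in Theorems \ref{strongstringcgeothm} and \ref{weakstringcthm}.

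First I would record the general naturality of $\nu$. For any pointed map $f: X \to Y$, the free evaluation map of (\ref{freeevalu}) provides a strictly commutative square
\begin{gather*}
\begin{aligned}
\xymatrix{
S^1 \times LX \ar[r]^{{\rm ev}_X} \ar[d]_{{\rm id}_{S^1}\times Lf} & X \ar[d]^{f} \\
S^1 \times LY \ar[r]^{{\rm ev}_Y} & Y.
}
\end{aligned}
\end{gather*}
Pulling a class $y\in H^{n+1}(Y)$ back along both routes and applying the defining identity ${\rm ev}^\ast(y)=1\otimes p^\ast(y)+s_1\otimes \nu(y)$ on each side, matching the components in the $s_1\otimes H^\ast(LX)$ summand yields the identity $(Lf)^\ast\circ \nu_Y=\nu_X\circ f^\ast$. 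Specialising to $f=\rho:S(\xi)\to M$ gives the commutativity of Diagram \ref{countstrongweakstringcrelationdia}.

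Next I would interpret this commutativity in the language of classifying sets. By Theorem \ref{strongstringcgeothm} the distinct strong String$^c$-structures of level $2k+1$ on $(V,\xi)$ correspond bijectively to elements of ${\rm Im}\,\rho^\ast\subseteq H^3(S(\xi))$, while by Theorem \ref{weakstringcthm} the distinct weak String$^c$-structures of level $2k+1$ correspond bijectively to elements of ${\rm Im}\,(L\rho)^\ast\subseteq H^2(LS(\xi))$. Given any $\alpha=\rho^\ast(\beta)\in {\rm Im}\,\rho^\ast$, the commutative square forces $\nu(\alpha)=(L\rho)^\ast(\nu(\beta))\in {\rm Im}\,(L\rho)^\ast$, so $\nu$ restricts to a map from the classifying set of strong structures into the classifying set of weak ones. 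This is precisely the transgression statement of the theorem.

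There is no substantive technical obstacle: the universal compatibility of the two obstruction classes under $\nu$ has already been recorded in Lemma \ref{suspenvalue} and exploited in Equation (\ref{keyevalu}) during the proof of Theorem \ref{compa2stringcthm}, and once that is in hand the present statement reduces to the single naturality observation above. The only mild point worth double-checking is that the bijections of Theorems \ref{strongstringcgeothm} and \ref{weakstringcthm} are genuinely implemented by the inclusions ${\rm Im}\,\rho^\ast\hookrightarrow H^3(S(\xi))$ and ${\rm Im}\,(L\rho)^\ast \hookrightarrow H^2(LS(\xi))$ respectively, so that transgression in the vertical direction commutes cleanly with the horizontal pullbacks $\rho^\ast$ and $(L\rho)^\ast$ with no residual twist.
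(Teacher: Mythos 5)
Your proposal is correct and follows essentially the same route as the paper, which simply invokes the naturality of the involved constructions; you have merely spelled out that naturality explicitly via the commuting square of evaluation maps and matched it against the classification statements of Theorems \ref{strongstringcgeothm} and \ref{weakstringcthm}. No gaps.
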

\begin{proof}
This follows immediately from the naturality of the involved constructions.
\end{proof}

\begin{corollary}\label{compa2stringcoro}
Let $(V,\xi)$ as in Theorem \ref{compa2stringcthm2}. Suppose $M$ is simply connected, and the Euler class $c_1(\xi)$ is a generator element of $H^2(M)$.
Then the distinct String$^c$-structures on $V$ transgress to the weak String$^c$-structures via the composition of the free suspension and the pullback
\[
(L\rho)^\ast\circ\nu=\nu\circ\rho^\ast: H^3(M) \rightarrow H^2(LS(\xi)).
\]
\end{corollary}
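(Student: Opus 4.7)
The plan is to observe that the Corollary is essentially immediate once one assembles three facts already proved in the excerpt, so I would just make explicit how the hypotheses force the transgression on classifying classes to coincide with the advertised composition.

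First, I would invoke Theorem~\ref{compa2stringcthm2}, which provides the commutative square
\[
\xymatrix{
H^3(M) \ar[d]^{\nu}  \ar[r]^{\rho^\ast}   & H^3(S(\xi)) \ar[d]^{\nu}\\
H^2(LM) \ar[r]^{(L\rho)^\ast}                & H^2(LS(\xi)),
}
\]
so that as maps $H^3(M)\to H^2(LS(\xi))$ we automatically have the identity $(L\rho)^\ast\circ\nu=\nu\circ\rho^\ast$ by the naturality of the free suspension applied to the circle bundle $\rho\colon S(\xi)\to M$. This is purely formal.

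Next, I would explain why each composition acts as the honest transgression sending strong String$^c$-structures to weak ones. Under the hypothesis that $M$ is simply connected, Corollary~\ref{rhosurjstringccor}(2) identifies the set of distinct strong String$^c$-structures of level $2k+1$ on $(V,\xi)$ with $H^3(M)$, via $\rho^\ast$; symmetrically, under the hypothesis that $c_1(\xi)$ is a generator of $H^2(M)$ (so in particular $S(\xi)$ is simply connected), Corollary~\ref{rhosurjweakstringccor} identifies the set of distinct weak String$^c$-structures of level $2k+1$ with $H^2(LS(\xi))$, via $(L\rho)^\ast$. So the transgression from strong to weak is encoded by a map $H^3(M)\to H^2(LS(\xi))$, and by Theorem~\ref{compa2stringcthm2} this map is realised both as $(L\rho)^\ast\circ\nu$ and as $\nu\circ\rho^\ast$.

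Combining these two observations gives the claimed equality and finishes the proof. There is no serious obstacle: the only mildly non-trivial point is to check that the identifications of structure sets with cohomology groups given in Corollaries~\ref{rhosurjstringccor} and \ref{rhosurjweakstringccor} are compatible with the transgression diagram of Theorem~\ref{compa2stringcthm2}, which is built into their construction from the Gysin-sequence analyses in those proofs.
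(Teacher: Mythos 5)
Your proposal is correct and follows exactly the paper's argument: the paper's proof is the one-line observation that the corollary follows immediately from Theorem~\ref{compa2stringcthm2}, Corollary~\ref{rhosurjstringccor} and Corollary~\ref{rhosurjweakstringccor}, which is precisely the assembly you carry out. Your write-up simply makes explicit the roles of the two hypotheses (noting that Corollary~\ref{rhosurjweakstringccor} in fact uses both the simple connectivity of $M$ and the generator condition on $c_1(\xi)$), which is a fair elaboration of the same route.
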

\begin{proof}
The corollary follows immediately from Theorem \ref{compa2stringcthm2}, Corollary \ref{rhosurjstringccor} and Corollary \ref{rhosurjweakstringccor}.
\end{proof}

\subsection{Fusive Spin$^c$ structures on looped manifolds}\label{fusionspincsubsec}
The String$^c$ structures can be also understood from the perspective of fusion structures, the study of which was initiated by Stolz and Teichner \cite{ST05}. In particular, they showed that an oriented manifold $N$ is Spin if and only if the loop space $LN$ is fusion orientable. Moreover, the equivalence classes of Spin structures on $N$ are in one-to-one correspondence with the fusion-preserving orientations of $LN$. If one drops the fusion conditions, the orientations on $LN$ can be viewed as weak Spin structures on $N$ in our terminology.

Similar results hold for the String case as well. For a Spin manifold $N$, a weak String structure one $LN$ can be defined as a lifting of the structure group of the looped frame bundle $LP_{Spin}(N)$ to the universal central extension $L\widehat{Spin}(n)$. It may be also called \textit{Spin structure on loop manifold} following Waldorf \cite{Wal16}, which was known as \textit{String structure on loop manifold} by the earlier work of Killingback \cite{K87} and McLaughlin \cite{Mclau92}.
In order to characterize String structures via Spin structures on loop manifolds, Waldorf \cite{Wal16} introduced additional fusion conditions and define the so-called \textit{fusion Spin structure} on $LN$, and proved that the universal central extension $L\widehat{Spin}(n)$ is a fusion extension in a canonical way. He then showed that $N$ is (strong) String if and only if $LN$ is fusion Spin. However, in this situation the fusion conditions are not enough to establish the bijection between the set of strong String structures and the set of fusion Spin structures, as remarked by Waldorf. Instead, he used thin homotopies of loops \cite{Wal12, Wal15} to investigate the correspondence. In contrast, Kottke-Melrose \cite{KM13} defined another modification of fusion Spin with some additional reparameterization equivariant conditions, which they called \textit{fusive loop Spin structures} over $LN$. 
They then showed that the equivalence classes of strong String structures on $N$ are in one-to-one correspondence with the equivalence classes of fusive loop Spin structures on $LN$. It should be noticed that all of these discussions are valid for general vector bundles with Spin structures.

Now let us consider the String$^c$ structures of negative levels on the Spin$^c$ manifold $(M, \xi)$. Let $k<0$ for the rest of this subsection. Recall that we have the $S^1$-invariant morphism of group extensions 
(\ref{Lgroupextmordia}). From that, the extension $L\widehat{Spin^c_k}(n)$ inherits a fusion structure from $L\widehat{Spin}(N)$. If the looped principal bundle $LP_{Spin}^k(M, \xi)$ of the vector bundle $TM\oplus \xi^{\oplus -2k-1}$ admits a fusion Spin structure in the sense of Waldorf, by Theorem \ref{weakstringcthm} and its proof, we may define the \textit{fusion Spin$^c$ structure of level $2k+1$} on $LM$ to be the restriction of the fusion Spin structure through the bundle embedding (cf. Diagram \ref{spinspincrelationweak})
\[
L\Theta_{2k+1}: LP_{Spin^c}(M)\hookrightarrow LP_{Spin}^k(M, \xi).
\]
Similarly we can also define \textit{fusive loop Spin$^c$ structures} on $LM$ of various levels following Kottke-Melrose. It is clear that if we drop the fusion conditions, the notion of the Spin$^c$ structures on $LM$ coincides with that of the weak String$^c$ structures on $M$.
Now recall by Theorem \ref{strongstringcgeothm}, strong String$^c$ structures can be also understood as strong String structures on $TM\oplus \xi^{\oplus -2k-1}$. Hence, the work of Waldorf \cite{Wal16} or Kottke-Melrose \cite{KM13} implies that 
$M$ is level $2k+1$ String$^c$ if and only if $LM$ is fusion (fusive loop) Spin$^c$ of level $2k+1$. Further by Kottke-Melrose \cite{KM13}, the equivalence classes of strong String$^c$ structures on $M$ are in one-to-one correspondence with the equivalence classes of fusive loop Spin$^c$ structures on $LM$.

Additionally, Kottke-Melrose \cite{KM13, KM15} defined the \textit{loop-fusion (\v{C}ech) cohomology}, $\check{H}^{\ast}_{lf}(LM;\mathbb{Z})$, and showed that the transgression map (i.e., the free suspension) $\nu$ factors through the isomorphic enhanced transgression $\nu_{lf}$
\begin{gather}
\begin{aligned}
\xymatrix{
\check{H}^{\ast}(M;\mathbb{Z})\ar[r]^{\nu_{lf} \ \ \ }_{\cong \ \ \ }  \ar[rd]_{\nu} & 
\check{H}^{\ast-1}_{lf}(LM;\mathbb{Z}) \ar[d]^{f}  \\ 
&  \check{H}^{\ast-1}(LM;\mathbb{Z}),
}
\end{aligned}
\label{enhancedtrandia}
\end{gather}
where $f$ is the forgetful morphism. Recall that the \v{C}ech cohomology is naturally isomorphic to the singular cohomology for CW complexes. The relations among strong String$^c$, weak String$^c$ and fusive loop Spin$^c$ then can be understood through this commutative diagram. Explicitly, the enhanced transgression of the obstruction to strong String$^c$ structure $\nu_{lf}(\frac{p_1(M)-(2k+1)c^2}{2})$ is the obstruction class to fusive loop Spin$^c$ structure, which reduces to $\mu_1(M)-(2k+1)sc$ the obstruction class to weak String$^c$ structure via the forgetful morphism $f$. Moreover, for the circle bundle $\rho: S(\xi)\rightarrow M$ of the determinant line bundle $\xi$, we have the commutative diagram
\begin{gather}
\begin{aligned}
 \xymatrix{
 \check{H}^{3}(M;\mathbb{Z}) \ar[r]^{\nu_{lf}}_{\cong} \ar@/^1.5pc/[rr]^{\nu} \ar[d]^{\rho^\ast}  &
  \check{H}^{2}_{lf}(LM;\mathbb{Z})  \ar[r]^{f}   \ar[d]^{(L\rho)^\ast}  &
  \check{H}^{2}(LM;\mathbb{Z})  \ar[d]^{(L\rho)^\ast}\\
   \check{H}^{3}(S(\xi);\mathbb{Z})\ar[r]^{\nu_{lf}}_{\cong}   \ar@/_1.5pc/[rr]_{\nu}  &
   \check{H}^{2}_{lf}(LS(\xi);\mathbb{Z})  \ar[r]^{f}    &
  \check{H}^{2}(LS(\xi);\mathbb{Z}),
 }
\end{aligned}
\label{enhancedtran2dia}
\end{gather}
where the outer rectangle is Diagram \ref{countstrongweakstringcrelationdia} in Theorem \ref{compa2stringcthm2}.
Then by Theorem \ref{strongstringcgeothm} and Theorem \ref{weakstringcthm}, we see that the equivalence classes of strong String$^c$ structures on $M$ transgress to the equivalence classes of fusive loop Spin$^c$ structures on $LM$, and then to the weak String$^c$-structures.

Let us summarise the above discussions in the following theorem. For details of the precise definitions of various fusion structures, loop-fusion (\v{C}ech) cohomology and others, please refer to Waldorf \cite{Wal16}, and Kottke-Melrose \cite{KM13, KM15}.
\begin{theorem}
Let $M$ be a connected compact Spin$^c$ manifold. Let $k<0$. Then
\begin{itemize}
\item[(1).] $M$ is level $2k+1$ String$^c$ if and only if $LM$ is fusion (or fusive loop) Spin$^c$ of level $2k+1$;
\item[(2).] the equivalence classes of strong String$^c$ structures on $M$ are in one-to-one correspondence with the equivalence classes of fusive loop Spin$^c$ structures on $LM$;
\item[(3).] the equivalence classes of strong String$^c$ structures on $M$ transgress to the equivalence classes of fusive loop Spin$^c$ structures on $LM$ through the enhanced transgression, and then to the weak String$^c$-structures after composing the forgetful map.
\end{itemize}
\end{theorem}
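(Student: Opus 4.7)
The strategy is to reduce every assertion to the corresponding known statement for (strong or weak) String structures on the auxiliary real vector bundle $TM\oplus \xi_{\mathbb{R}}^{\oplus -2k-1}$ via Theorem \ref{strongstringcgeothm} and Theorem \ref{weakstringcthm}, and then invoke the existing theory of Waldorf \cite{Wal16} and Kottke--Melrose \cite{KM13, KM15}. The bridge is the bundle embedding
\[
L\Theta_{2k+1}:LP_{Spin^c}(M)\hookrightarrow LP_{Spin}^{k}(M,\xi)
\]
together with the $S^1$-equivariant morphism of central extensions (\ref{Lgroupextmordia}) that yields the fusion structure on $L\widehat{Spin_k^c}(n)$ from the canonical fusion structure on $L\widehat{Spin}(N)$. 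In particular, our definition of a \emph{fusion (fusive loop) Spin$^c$ structure of level $2k+1$} on $LM$ is, tautologically, the pullback of a fusion (fusive loop) Spin structure on $LP_{Spin}^{k}(M,\xi)$ along $L\Theta_{2k+1}$.

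For statement (1), I combine two equivalences. Theorem \ref{strongstringcgeothm} gives that $(M,\xi)$ is level $2k+1$ strong String$^c$ if and only if the stable Spin bundle associated to $TM\oplus \xi_{\mathbb{R}}^{\oplus -2k-1}$ is strong String. The Waldorf--Kottke--Melrose theorem then says that the latter is equivalent to $LP_{Spin}^{k}(M,\xi)$ being fusion (fusive loop) Spin, which, by our definition, is equivalent to $LM$ being fusion (fusive loop) Spin$^c$ of level $2k+1$. For statement (2), the same dictionary upgrades a bijection to a bijection: Kottke--Melrose \cite{KM13} establish a canonical one-to-one correspondence between equivalence classes of strong String structures on the bundle $TM\oplus \xi_{\mathbb{R}}^{\oplus -2k-1}$ and equivalence classes of fusive loop Spin structures on $LP_{Spin}^{k}(M,\xi)$; pulling back along $L\Theta_{2k+1}$ on one side and identifying strong String$^c$ structures with strong String structures on the enlarged bundle on the other side (using Theorem \ref{strongstringcgeothm} together with the counting result via $\rho^\ast$) gives the desired bijection.

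For statement (3), I assemble the naturality of the enhanced transgression $\nu_{lf}$ of Kottke--Melrose \cite{KM15} in the form already recorded as Diagram (\ref{enhancedtran2dia}), together with the fact that obstruction classes and difference classes for the three kinds of structures live in the three cohomology theories appearing in that diagram. The obstruction class $\tfrac{p_1(M)-(2k+1)c^2}{2}\in \check{H}^4(M;\mathbb{Z})$ is sent by $\nu_{lf}$ to the obstruction class in loop-fusion cohomology defining fusive loop Spin$^c$ structures of level $2k+1$, and the forgetful morphism $f$ then produces $\mu_1(M)-(2k+1)sc$, which is the obstruction class of weak String$^c$ of level $2k+1$ by Definition \ref{stringcdefLG} and the identification (\ref{keyevalu}). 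The transgression of \emph{structures} (rather than of the obstruction classes) follows by applying the counting parts of Theorem \ref{strongstringcgeothm} and Theorem \ref{weakstringcthm}, whose classifying groups $H^3(S(\xi))$ and $H^2(LS(\xi))$ fit naturally into the bottom row of Diagram (\ref{enhancedtran2dia}), with the enhanced transgression $\nu_{lf}$ interpolating between $\rho^\ast$ upstairs and $(L\rho)^\ast$ downstairs.

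The main technical hurdle I foresee is justifying compatibility of the fusion/reparameterization equivariance conditions with the bundle embedding $L\Theta_{2k+1}$ and with the central extension morphism in (\ref{Lgroupextmordia}); in other words, verifying that \emph{restriction} from $LP_{Spin}^{k}(M,\xi)$ to $LP_{Spin^c}(M)$ actually preserves the fusion (resp.~fusive loop) structure, and that equivalences of such structures restrict to equivalences. This reduces to the observation that $L\Theta_{2k+1}$ is induced fibrewise by the group embedding $\lambda_{2k+1}$ of (\ref{stringck-defdiag}), which is a homomorphism of topological groups; consequently it intertwines both the $S^1$-actions on the central extensions and the fusion products over triples of paths, so all of Waldorf's and Kottke--Melrose's constructions pull back verbatim. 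Once this compatibility is in hand, assertions (1)--(3) follow by diagram chases rather than new analysis.
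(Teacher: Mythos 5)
Your proposal is correct and follows essentially the same route as the paper: the theorem is obtained by defining fusion (fusive loop) Spin$^c$ structures as restrictions along $L\Theta_{2k+1}$, reducing each assertion to the corresponding Waldorf/Kottke--Melrose statement for the enlarged bundle $TM\oplus\xi_{\mathbb{R}}^{\oplus -2k-1}$ via Theorems \ref{strongstringcgeothm} and \ref{weakstringcthm}, and reading off part (3) from the enhanced-transgression diagram (\ref{enhancedtran2dia}). Your explicit flagging of the compatibility of the fusion conditions with $L\Theta_{2k+1}$ and the extension morphism (\ref{Lgroupextmordia}) is a point the paper treats only implicitly, and your resolution of it matches the paper's level of detail.
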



\section{Modular invariants and group actions on String$^c$ manifolds}\label{genwittengenussec}
\noindent In this section, for even dimensional level $(2k+1)$ String$^c$ manifolds with $2k+1>0$, we construct Witten type genera, which are modular invariants taking values in $\Z [\frac{1}{2}]$ and prove Liu's type vanishing theorem for them. 
They extend the generalized Witten genera for level 1 and level 3 String$^c$ manifolds constructed in \cite{CHZ10, CHZ11}. We also give some applications of these vanishing results to Lie group actions on manifolds.

\subsection{Generalized Witten genera and vanishing theorems}\label{genwittensubsec}
Let $M$ be a Spin$^c$ manifold, which is level $2k+1$ String$^c$ with $2k+1>0$. Let 
\[
\vec a=(a_1, a_2, \cdots, a_{r})\in \Z^r,  ~~~~ \ \ \vec b=(b_1, b_2, \cdots, b_s) \in \Z^s
\]
be two vectors of integers such that $\sum_{j=1}^r a_j+\sum_{j=1}^s b_j $ is even. If $M$ is $4m$ dimensional, we require that
\be \label{cond4m} 3||\vec a||^2+||\vec b||^2=2k-2; \ee
and if $M$ is $4m+2$ dimensional, we require that
\be \label{cond4m+2} 3||\vec a||^2+||\vec b||^2=2k. \ee

Let $\xi$ be the determinat line bundle of the Spin$^c$ structure. 
Let $h^\xi$ be a Hermitian metric on $\xi$ and $\nabla^\xi$ be a Hermitian
connection. Let $h^{\xi_{\R}}$ and $\nabla^{\xi_{\R}}$ be the
induced Euclidean metric and connection on $\xi_{ \R}$. Construct
\begin{equation}
\begin{split}
&\Theta_{\vec a, \vec b}(T_{\mathbb{C}}M,\xi_{\mathbb{R}}\otimes \mathbb{C})\\
:=& \left( \overset{%
\infty }{\underset{n=1}{\bigotimes }}S_{q^{2n}}(\widetilde{T_{\mathbb{C}}M}%
)\right) \\ 
&\otimes \otimes_{j=1}^r \left( \overset{\infty }{\underset{n=1}{\bigotimes }}\Lambda
_{q^{2n}}(\widetilde{\xi^{\otimes a_j}_{\mathbb{R}}\otimes \mathbb{C}})\otimes \overset{\infty }{\underset{n=1}{\bigotimes }}\Lambda
_{-q^{2n-1}}(\widetilde{\xi^{\otimes a_j}_{\mathbb{R}}\otimes \mathbb{C}})\otimes \overset{\infty }{\underset{n=1}{\bigotimes }}\Lambda _{q^{2n-1}}(%
\widetilde{\xi^{\otimes a_j}_{\mathbb{R}}\otimes \mathbb{C}}) \right) \\
&\otimes \otimes_{j=1}^s \left( \overset{\infty }{\underset{n=1}{\bigotimes }}%
\Lambda _{-q^{2n}}(\widetilde{\xi^{\otimes b_j}_{\mathbb{R}}\otimes \mathbb{C}})\right),
\end{split}
\end{equation}
where $\widetilde{E}=E-\mathbb{C}^{{\rm dim}(E)}$ for any complex bundle $E$.
Then $\nabla^{TM}$ and $\nabla^{\xi}$ induce connections
$\nabla^{\Theta_{\vec a, \vec b}(T_{\mathbb{C}}M,\xi_{\mathbb{R}}\otimes \mathbb{C})}$ on $\Theta_{\vec a, \vec b}(T_{\mathbb{C}}M,\xi_{\mathbb{R}}\otimes \mathbb{C})$. Let $c=c_1(\xi, \nabla^{\xi})$ be the
first Chern form of $(\xi, \nabla^{\xi})$.

If dim$M=4m$, define the {\em type $(2k+1; \vec a, \vec b)$ Witten form}
\be
\begin{split}
&\mathcal{W}^c_{2k+1; \vec a, \vec b}(M)\\
:=&\widehat{A}(TM, \nabla^{TM})e^{\frac{c}{2}}\prod_{j=1}^r\cosh\left(\frac{a_jc}{2}\right)
\prod_{j=1}^s\sinh\left(\frac{b_jc}{2}\right)\\
&\cdot \mathrm{ch}\left( \Theta_{\vec a, \vec b} (T_{\mathbb{C}}M,\xi_{\mathbb{R}}\otimes \mathbb{C}) \otimes \overset{\infty }{\underset{n=1}{\bigotimes }}\Lambda
_{q^{2n}}(\widetilde{\xi_{\mathbb{R}}\otimes \mathbb{C}})\otimes \overset{\infty }{\underset{n=1}{\bigotimes }}\Lambda
_{-q^{2n-1}}(\widetilde{\xi_{\mathbb{R}}\otimes \mathbb{C}})\otimes \overset{\infty }{\underset{n=1}{\bigotimes }}\Lambda _{q^{2n-1}}(%
\widetilde{\xi_{\mathbb{R}}\otimes \mathbb{C}}) \right).\\
\end{split} 
\ee
If dim$M=4m+2$, define the {\em type $(2k+1; \vec a, \vec b)$ Witten form}
\be
\begin{split}
&\mathcal{W}^c_{2k+1; \vec a, \vec b}(M)\\
:=&\widehat{A}(TM, \nabla^{TM})e^{\frac{c}{2}}\prod_{j=1}^r\cosh\left(\frac{a_jc}{2}\right)
\prod_{j=1}^s\sinh\left(\frac{b_jc}{2}\right)\mathrm{ch}\left( \Theta_{\vec a, \vec b} (T_{\mathbb{C}}M,\xi_{\mathbb{R}}\otimes \mathbb{C}) \otimes \overset{\infty }{\underset{n=1}{\bigotimes }}\Lambda
_{q^{-2n}}(\widetilde{\xi_{\mathbb{R}}\otimes \mathbb{C}})\right).
\end{split} 
\ee

We can express these generalized Witten forms by using the Chern-root algorithm.
Let $\{\pm 2\pi \sqrt{-1}z_{j}\}$ be the formal Chern roots for $(T_{\mathbb{%
C}}M,\nabla ^{T_{\mathbb{C}}M})$ and set $u=-\frac{\sqrt{-1}}{2\pi}c$. In terms of the theta-functions (the details about which are discussed in Appendix \hyperref[AppendixD]{D}), we get
through direct computations that (c.f. \cite{Liu95, Liu96, CHZ10, CHZ11})

\begin{equation}\label{Eqn: Witten genus Chern roots 4m}
\begin{split}
&\mathcal{W}^c_{2k+1, \vec a, \vec b}(M^{4m})\\
=& \left(
\prod_{j=1}^{2m}z_{j}\frac{\theta ^{\prime }(0,\tau )}{\theta (z_{j},\tau )}%
\right) \frac{\theta _{1}(u,\tau )\theta _{2}(u,\tau )\theta _{3}(u,\tau )}{%
\theta _{1}(0,\tau )\theta _{2}(0,\tau )\theta _{3}(0,\tau )}\prod_{j=1}^r\frac{\theta _{1}(a_ju,\tau )\theta _{2}(a_ju,\tau )\theta _{3}(a_ju,\tau )}{%
\theta _{1}(0,\tau )\theta _{2}(0,\tau )\theta _{3}(0,\tau )}\\
&\cdot \prod_{j=1}^s\frac{\sqrt{-1}\theta (b_ju,\tau )}{\theta _{1}(0,\tau )\theta
_{2}(0,\tau )\theta _{3}(0,\tau )}\\
\end{split}  
\end{equation}
and
\begin{equation} \label{Eqn: Witten genus Chern roots 4m+2}
\begin{split}
&\mathcal{W}^c_{2k+1, \vec a, \vec b}(M^{4m+2})\\
=& \left(
\prod_{j=1}^{2m}z_{j}\frac{\theta ^{\prime }(0,\tau )}{\theta (z_{j},\tau )}%
\right) \prod_{j=1}^r\frac{\theta _{1}(a_ju,\tau )\theta _{2}(a_ju,\tau )\theta _{3}(a_ju,\tau )}{%
\theta _{1}(0,\tau )\theta _{2}(0,\tau )\theta _{3}(0,\tau )}\\
&\cdot \frac{\sqrt{-1}\theta (u,\tau )}{\theta _{1}(0,\tau )\theta
_{2}(0,\tau )\theta _{3}(0,\tau )}\prod_{j=1}^s \frac{\sqrt{-1}\theta (b_ju,\tau )}{\theta _{1}(0,\tau )\theta
_{2}(0,\tau )\theta _{3}(0,\tau )}.\\
\end{split}  
\end{equation}

Define the {\em type $(2k+1; \vec a, \vec b)$ Witten genus} by
\begin{equation}
W^c_{2k+1, \vec a, \vec b}(M^{4m}):=\int_{M^{4m}} \mathcal{W}^c_{2k+1, \vec a, \vec b}(M^{4m}),
\end{equation}
and
\begin{equation}
W^c_{2k+1, \vec a, \vec b}(M^{4m+2}):=\int_{M^{4m+2}} \mathcal{W}^c_{2k+1, \vec a, \vec b}(M^{4m+2}).
\end{equation}

Note that 
\be \prod_{j=1}^r\cosh\left(\frac{a_jc}{2}\right)\prod_{j=1}^s\sinh\left(\frac{b_jc}{2}\right)=\frac{1}{2^{r+s}}e^{-\frac{\sum_{j=1}^r a_j+\sum_{j=1}^s b_j }{2}c}\prod_{j=1}^r\left(e^{a_jc}+1\right)\prod_{j=1}^s\left(e^{b_jc}-1\right).\ee
However since $\sum_{j=1}^r a_j+\sum_{j=1}^s b_j $ is even, one has that $e^{-\frac{\sum_{j=1}^r a_j+\sum_{j=1}^s b_j }{2}c}\prod_{j=1}^r\left(e^{a_jc}+1\right)\prod_{j=1}^s\left(e^{b_jc}-1\right)$ is the Chern character of some vector bundle. Hence by the Atiyah-Singer index theorem, $2^{r+s}W^c_{2k+1, \vec a, \vec b}(M^{4m})$ and  $2^{r+s}W^c_{2k+1, \vec a, \vec b}(M^{4m+2})$ are analytic, i.e., they are indices of $q$-series of twisted Spin$^c$ Dirac operators. We therefore see that 
$W^c_{2k+1; \vec a, \vec b}(M^{4m})\in \Z[\frac{1}{2}]$ and $W^c_{2k+1; \vec a, \vec b}(M^{4m+2})\in \Z[\frac{1}{2}]$. 

By the same method in \cite{Liu95cmp}, using the conditions (\ref{cond4m}) or (\ref{cond4m+2}) when performing the transformation laws of theta functions, we have
\begin{theorem}
\label{Thm: Modularity of Witten type genus}If $\dim M=4m$, then $W^c_{2k+1; \vec a, \vec b}(M^{4m})\in \Z[\frac{1}{2}]$ is a modular form of weight $2m$ over $SL(2,\mathbb{Z})$; if $\dim M=4m+2$, then $W^c_{2k+1; \vec a, \vec b}(M^{4m+2})\in \Z[\frac{1}{2}]$ is a modular form of weight $2m$ over $SL(2,\mathbb{Z})$.   \hfill $\Box$
\end{theorem}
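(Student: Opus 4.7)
The plan is to verify modularity of each $\mathcal{W}^c_{2k+1;\vec a,\vec b}$ directly on the Chern--root expressions (\ref{Eqn: Witten genus Chern roots 4m}) and (\ref{Eqn: Witten genus Chern roots 4m+2}) by applying the transformation laws of Jacobi theta functions under the standard generators $S:\tau\mapsto -1/\tau$ and $T:\tau\mapsto\tau+1$ of $SL(2,\mathbb{Z})$, following the strategy of Liu \cite{Liu95cmp}. Invariance up to the correct automorphy factor then reduces to an \emph{anomaly cancellation}, and the numerical conditions (\ref{cond4m}), (\ref{cond4m+2}) on $(\vec a,\vec b)$ are precisely what is needed for the cancellation to succeed once the level $2k+1$ String$^c$ obstruction is used to replace $\sum z_j^2$ by $(2k+1)u^2$.

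Invariance under $T$ is the easier of the two checks. Under $\tau\mapsto\tau+1$ the functions $\theta_2$ and $\theta_3$ are interchanged, while each of $\theta,\theta_1,\theta_2,\theta_3$ picks up a root of unity; the numerator and denominator in our formulas each use the three even thetas in the same symmetric fashion, so the swap is invisible. The residual phase factors from the $\hat A$-series $\prod z_j\,\theta'(0,\tau)/\theta(z_j,\tau)$ together with those from the $u$-factors cancel, with the parity assumption that $\sum a_j+\sum b_j$ be even collecting the $\sqrt{-1}$ normalizations on $\theta(b_j u,\tau)$ (and $\theta(u,\tau)$ in the $4m+2$ case) into a trivial phase.

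The main work is invariance under $S$. Each theta factor $\theta_\ast(z,\tau)$ contributes, under $z\mapsto z/\tau$ and $\tau\mapsto -1/\tau$, a Gaussian anomaly $e^{i\pi z^2/\tau}$ (and a factor $(-i\tau)^{1/2}$), with the four theta types permuting among themselves. Collecting numerator versus denominator contributions across all factors in $\mathcal{W}^c_{2k+1;\vec a,\vec b}(M^{4m})$ gives a total anomaly
\[
\exp\!\left(\frac{i\pi}{\tau}\Bigl[-\sum_{j=1}^{2m}z_j^{2}+\bigl(3+3\|\vec a\|^{2}+\|\vec b\|^{2}\bigr)u^{2}\Bigr]\right),
\]
with the constant $3$ replaced by $1$ in the $4m+2$ case (since the triple $\theta_1\theta_2\theta_3$-factor at argument $u$ is absent there). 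The level $2k+1$ String$^c$ hypothesis gives $\sum z_j^{2}=(2k+1)u^{2}$ rationally; so after pairing with $[M]$ the anomaly is controlled by the coefficient $(3+3\|\vec a\|^2+\|\vec b\|^2)-(2k+1)$ (respectively $(1+3\|\vec a\|^2+\|\vec b\|^2)-(2k+1)$), which vanishes exactly under (\ref{cond4m}) (respectively (\ref{cond4m+2})). Counting the $(-i\tau)^{1/2}$ factors from the $2m$ surviving theta factors yields the stated weight $2m$.

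The main obstacle is the careful accounting of the Gaussian anomalies and the stray phase factors, together with the cohomological translation of the String$^c$ obstruction into a relation among the formal Chern roots that is meaningful after integration over $M$. This bookkeeping is in spirit identical to the argument in \cite{Liu95cmp}; the novelty here is only that both $TM$ and the (many) twistings by powers of $\xi_\mathbb{R}$ contribute to the anomaly, and the linear conditions (\ref{cond4m}) and (\ref{cond4m+2}) are the exact numerical balance needed. Rationality of the $q$-coefficients has already been observed before the theorem statement (the $2^{r+s}$-multiple is an index of a $q$-series of twisted Spin$^c$ Dirac operators), so modularity together with this integrality identifies the answer as a modular form of weight $2m$ over $SL(2,\mathbb{Z})$ with $\mathbb{Z}[\tfrac12]$-coefficients.
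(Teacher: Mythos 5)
Your proposal is correct and follows essentially the same route as the paper, which simply invokes ``the same method in \cite{Liu95cmp}, using the conditions (\ref{cond4m}) or (\ref{cond4m+2}) when performing the transformation laws of theta functions'' without spelling out the details. Your accounting of the Gaussian anomalies (the coefficients $3+3\|\vec a\|^2+\|\vec b\|^2$ versus $1+3\|\vec a\|^2+\|\vec b\|^2$ matching $2k+1$ under the respective conditions, with $\sum z_j^2=(2k+1)u^2$ supplied by the String$^c$ hypothesis) is exactly the intended anomaly cancellation, and the weight and integrality statements are handled as the paper does.
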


For the generalized Witten genus $W^c_{2k+1; \vec a,  \vec b}(M)$, we have the following Liu's type vanishing theorem.

\begin{theorem}
\label{Thm:action} Let $M$ be a connected compact level $2k+1$ String$^c$ manifold with $2k+1>0$. If $M$ admits an effective action of a simply connected compact Lie group that can be lifted to the Spin$^c$ structure and the action is positive, then $W^c_{2k+1; \vec a,  \vec b}(M)=0$. 
\end{theorem}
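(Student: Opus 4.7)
The plan is to adapt Liu's modular-forms strategy for vanishing/rigidity of elliptic genera \cite{Liu95, Liu95cmp} (already extended to the first String$^c$ cases in \cite{CHZ10, CHZ11}) to the present twisted setting. First I would reduce to an $S^1$-action: since $G$ is simply connected and compact, pick a generic circle $S^1\subset G$ in a maximal torus $T$ so that the positivity relation $p_1(M)_G - (2k+1)c_1(\xi)^2_G = \alpha\pi^\ast q$ restricts to $N\pi^\ast u^2$ with an integer $N>0$ (possible since $q|_T$ is a nonzero quadratic form and $\alpha>0$). The lift of the $G$-action to the Spin$^c$ structure and hence to $\xi$ restricts to an $S^1$-equivariant lift, so it is enough to prove the vanishing of the $S^1$-equivariant version of the genus.

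Next I would set up the equivariant Witten form. Replacing the Chern roots $z_j$ and the class $u=-\tfrac{\sqrt{-1}}{2\pi}c$ in \eqref{Eqn: Witten genus Chern roots 4m}--\eqref{Eqn: Witten genus Chern roots 4m+2} by their $S^1$-equivariant counterparts produces $\mathcal{W}^c_{2k+1;\vec a,\vec b}(M)(\tau,z)$, with $z\in\mathbb{C}$ the coordinate dual to the $S^1$-weight. By the Atiyah--Bott--Segal--Singer equivariant index theorem applied to the twisted Spin$^c$ Dirac operators giving the coefficients of $2^{r+s}W^c_{2k+1;\vec a,\vec b}(M)$, the global equivariant genus $W^c_{2k+1;\vec a,\vec b}(M)(\tau,z)$ is a finite sum over the fixed components $F_\alpha\subset M^{S^1}$ of integrals over $F_\alpha$ of explicit products of theta-function ratios in arguments shifted by the $S^1$-weights on the normal bundle and on $\xi|_{F_\alpha}$. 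Although each local summand has poles at $z\in\mathbb{Z}+\mathbb{Z}\tau$ coming from the $\theta(z_j,\tau)$ denominators, the global sum is holomorphic in $z$ by the standard residue-cancellation argument of \cite{Liu95cmp}.

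The central step is to establish that $W^c_{2k+1;\vec a,\vec b}(M)(\tau,z)$ is a holomorphic Jacobi form of weight $2m$ and positive index $N/2$ on $SL_2(\mathbb{Z})\ltimes\mathbb{Z}^2$ (possibly with a character, on a finite-index subgroup). Using the Appendix D transformation laws for $\theta,\theta_1,\theta_2,\theta_3$ under $\tau\mapsto-1/\tau$ and $z\mapsto z+\tau$, each theta-ratio in \eqref{Eqn: Witten genus Chern roots 4m}--\eqref{Eqn: Witten genus Chern roots 4m+2} contributes a quadratic exponential anomaly. The numerical identities \eqref{cond4m} and \eqref{cond4m+2} are precisely calibrated so that the contributions from the twist data $(\vec a,\vec b)$ and the auxiliary $\xi_{\mathbb{R}}\otimes\mathbb{C}$-factors in $\Theta_{\vec a,\vec b}$ cancel exactly, leaving at each fixed component the quadratic form $\sum_j n_{\alpha,j}^2 - (2k+1) m_\alpha^2$ in the weights $n_{\alpha,j}$ of the normal bundle and $m_\alpha$ of $\xi|_{F_\alpha}$. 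By the equivariant String$^c$ identity this equals $N$ independently of $\alpha$, and the fixed-point sum therefore transforms as a genuine Jacobi form of index $N/2$.

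Finally, since $N>0$, the standard Liouville/triangular-region argument for holomorphic Jacobi forms of positive index (as in \cite{Liu95, Liu95cmp}) forces $W^c_{2k+1;\vec a,\vec b}(M)(\tau,z)\equiv 0$ on $\mathbb{H}\times\mathbb{C}$: the quasi-periodicity $f(\tau,z+\tau)=e^{-2\pi iNz-\pi iN\tau}f(\tau,z)$ combined with global holomorphicity in $z$ and the holomorphic $q$-expansion at $\tau\to i\infty$ coming from Theorem \ref{Thm: Modularity of Witten type genus} leaves no room for a nonzero solution. Setting $z=0$ yields $W^c_{2k+1;\vec a,\vec b}(M)=0$. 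The main technical obstacle is the third paragraph: verifying that the quadratic anomaly contributions from all theta-ratio factors aggregate, thanks to \eqref{cond4m}/\eqref{cond4m+2} together with the equivariant String$^c$ condition, into exactly the Jacobi-form anomaly governed by $N$; once this bookkeeping is complete, the vanishing reduces to the now-standard Liu-type template.
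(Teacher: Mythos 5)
Your proposal is correct and follows essentially the same route as the paper: reduce to a circle subgroup whose induced action has nonzero (positive) anomaly $p_1(TM)_{S^1}-(2k+1)c_1(\xi)^2_{S^1}=N\pi^{\ast}u^2$, then run Liu's modular-invariance/Jacobi-form vanishing argument on the equivariant fixed-point expression. The only cosmetic differences are that the paper extracts the circle via an $SU(2)$ or $SO(3)$ subgroup (citing Melvin--Parker) rather than a generic circle in a maximal torus, treats the fixed-point-free case separately by the Lefschetz fixed point formula, and simply cites Liu's proof for the modular bookkeeping that you sketch explicitly.
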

\begin{proof} Let $G$ be the simply connected compact Lie group. It has been shown in \cite{MP86} that $G$ contains $SU(2)$ or $SO(3)$ as subgroup. Since there exists the standard $2$-sheet covering $p: SU(2)\to SO(3)$, in either case we see that there exists a $SU(2)$-action on $M$ factoring through $G$. Then choose any subgroup $S^1\hookrightarrow SU(2)$. Since $G$ acts effectively on $M$ and can be lifted to the Spin$^c$ structure, we have that the induced $S^1$-action is non-trivial and can be lifted to the bundle $TM- \xi^{\oplus (2k+1)}$. 
In particular, through the induced composition map of classifying spaces
\[
BS^1\rightarrow BSU(2)\rightarrow BG,
\]
there exists the canonical generator $q\in H^4(BG)$ restricted to the generator $u^2\in H^4(BS^1)$.

We now can apply the similar argument of Dessai \cite{Des94} to the bundle $TM- \xi^{\oplus (2k+1)}$. 
If the $S^1$-action has no fixed points, the generalized Witten genus vanishes by the Atiyah-Bott-Segal-Singer-Lefschetz fixed point formula (\cite{AB88, AS}). Otherwise suppose that there are some fixed points. Let $EG$ be the universal $G$-principal bundle over the classifying space $BG$ of any topological group $G$. By applying the dual Blakers-Massey theorem (Theorem \ref{dualBM} in Appendix \hyperref[AppendixC]{C}) to the Borel fibre bundle
\[
M\stackrel{i}{\rightarrow} M\times_{G}EG\stackrel{\pi}{\rightarrow} BG
\]
(with the fact that $BG$ is $3$-connected), we see that there exists a commutative diagram
\begin{gather*}
\begin{aligned}
\xymatrix{
0\ar[r]  &   H^4(BG) \ar[d]  \ar[r]^{\pi^\ast \ \ \ \ \ \ \ }   & H^4(M\times_{G}EG) \ar[r]^{   \ \ \    \ \ \  \ \ \  \ \ \ i^\ast}  \ar[d] &H^4(M)    \ar@{=}[d]  \\
  & H^4(BS^1) \ar[r]^{\pi^\ast\ \ \ }  & H^4(M\times_{S^1}ES^1) \ar[r]^{   \ \ \  \ \ \    \ \ \  i^\ast} & H^4(M), \\
  }
\end{aligned}
\end{gather*}
such that the first row is exact, and maps to the second row by restricting the action to $S^1$.
On the other hand, since the level $2k+1$ String$^c$ condition tells us that $p_1(TM- \xi^{\oplus (2k+1)})=0$,
we have 
\[
p_1(TM)_{G}-(2k+1)c_1(\xi)^2_{G}=p_1(TM- \xi^{\oplus (2k+1)})_{G}=n\cdot \pi^\ast q
\] 
for some $n>0$ by positive assumption (\ref{positivedefintro}).
Hence by the above commutative diagram we see that the restriction of the equivariant Pontryagin class 
\[
p_1(TM)_{S^1}-(2k+1)c_1(\xi)^2_{S^1}=p_1(TM- \xi^{\oplus (2k+1)})_{S^1}=n\cdot \pi^\ast u^2, \ \ n>0.
\]
The theorem then follows by the proof of Liu's vanishing theorem \cite{Liu95} for nonzero anomaly about Witten genus. 
\end{proof}

\subsection{Some applications of the vanishing theorem}\label{appsubsec}

Suppose $(M, J)$ is a compact stable almost complex manifold. Then $M$ has a canonical Spin$^c$ structure determined by $J$. If $G$ acts smoothly on $M$ and preserves the stable almost complex structure $J$, then the action of $G$ can be lifted to the Spin$^c$ structure and the bundle $\xi$, the determinant line bundle. Applying the above vanishing theorem, we immediately obtain
\begin{theorem}[\protect Theorem \ref{Mainstableacs}]
Let $(M, J)$ be a compact stable almost complex manifold, which is level $2k+1$ String$^c$, i.e., $p_1(TM)=(2k+1)c_1^2$ and suppose $2k+1>0$. If $M$ admits a positive effective action of $G$ preserving $J$, then $W^{c_1}_{2k+1; \vec a,  \vec b}(M)=0$.
\end{theorem}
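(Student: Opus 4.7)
The plan is to deduce this theorem directly from Theorem \ref{Thm:action} (the main vanishing result), so the proof is essentially a translation of hypotheses. First I would recall the standard fact that any stable almost complex structure $J$ on $TM$ canonically determines a Spin$^c$ structure on $M$, with determinant line bundle $\xi = \det_{\mathbb{C}}(TM, J)$ and first Chern class $c_1(\xi) = c_1(TM, J) = c_1$. Under this identification the String$^c$ condition $\frac{p_1(TM) - (2k+1)c_1(\xi)^2}{2} = 0$ from Definition \ref{stringcdefBG} is exactly the hypothesis $p_1(TM) = (2k+1)c_1^2$, so $M$ is a level $2k+1$ String$^c$ manifold in the sense required by Theorem \ref{Thm:action}.

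Next I would verify that the $G$-action lifts to the Spin$^c$ structure. Because $G$ preserves $J$, the induced action on $TM$ is by complex-linear bundle automorphisms, so it acts on the unitary frame bundle of $(TM, J)$; taking the complex determinant of this action yields a lift of the $G$-action to $\xi$, and the associated Spin$^c$ frame bundle $P_{Spin^c}(TM)$ (as constructed in Section \ref{cohomologySpincsec} from the oriented frame bundle and $S(\xi)$) inherits a compatible $G$-action. Here one can invoke standard arguments using the simply-connectedness of $G$ to remove any residual double-cover ambiguity in the lift. Positivity of the action is exactly the hypothesis imposed in Theorem \ref{Thm:action}, so it is carried over verbatim.

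With these translations in place, the conclusion follows immediately by applying Theorem \ref{Thm:action} to $(M, \xi)$ with the given $G$-action: the generalized Witten genus $W^{c}_{2k+1; \vec a, \vec b}(M) = 0$ for every $(\vec a, \vec b)$ satisfying the dimension-matched condition (\ref{cond4m}) or (\ref{cond4m+2}). There is essentially no obstacle in this argument beyond the lifting verification in the previous step, which is mild because $G$ is simply connected and acts through complex-linear automorphisms of $(TM, J)$; all the analytic content (rigidity via the Atiyah--Bott--Segal--Singer fixed point formula, and Liu's vanishing under a positive anomaly) has already been absorbed into Theorem \ref{Thm:action}.
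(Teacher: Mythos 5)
Your proposal is correct and follows essentially the same route as the paper: the paper likewise observes that $J$ determines a canonical Spin$^c$ structure with determinant line bundle $\xi$, that a $G$-action preserving $J$ lifts to that Spin$^c$ structure and to $\xi$, and then deduces the vanishing immediately from Theorem \ref{Thm:action}. Your extra care with the lifting step (complex-linearity of the action, determinant line bundle, simply-connectedness removing the double-cover ambiguity) only makes explicit what the paper leaves as a standard fact.
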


Recall that our generalized Witten genera are indexed by the pair of vectors $(\vec a,  \vec b)$. It turns out that this flexibility allows us to deduce results concerning the group actions on manifolds with particular arithmetic conditions. In particular, we prove a slightly stronger version of Corollary \ref{genexamplethmintro}.
\begin{theorem}\label{genexamplethmsec}
Let $(M, \xi, c=c_1(\xi))$ be a compact String$^c$ manifold of level $2k+1$ with the determinant line bundle $\xi$. 
If $M$ satisfies one of the following 
\begin{itemize}
\item[$(A)$] $M=M^{4m}$, and $c^{2m}\neq 0$ rationally,
  \begin{itemize}
  \item[$(A.1)$] $k-m\equiv 0~{\rm mod}~3$, and $k-m\geq 9$,
  \item[$(A.2)$] $k-m\equiv 1~{\rm mod}~3$, and $k-m\geq 1$,
  \item[$(A.3)$] $k-m\equiv -1~{\rm mod}~3$, and $k-m\geq 5$;
  \end{itemize}
\item[$(B)$] $M=M^{4m+2}$, and $c^{2m+1}\neq 0$ rationally,
  \begin{itemize}
  \item[$(B.1)$] $k-m\equiv 0~{\rm mod}~3$, and $k-m\geq 0$,
  \item[$(B.2)$] $k-m\equiv 1~{\rm mod}~3$, and $k-m\geq 4$,
  \item[$(B.3)$] $k-m\equiv -1~{\rm mod}~3$, and $k-m\geq 8$,
  \end{itemize}
\end{itemize}
then $M$ does not admit a positive effective action of a simply connected compact Lie group that can be lifted to the underlying Spin$^c$ structure.
\end{theorem}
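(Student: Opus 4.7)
The strategy is to argue by contradiction using Theorem \ref{Thm:action}. Suppose $M$ admits a positive effective action of a simply connected compact Lie group lifting to the Spin$^c$ structure. Then by Theorem \ref{Thm:action}, $W^c_{2k+1; \vec a, \vec b}(M) = 0$ for every pair $(\vec a, \vec b)$ satisfying the arithmetic condition (\ref{cond4m}) in case (A) or (\ref{cond4m+2}) in case (B). I will exhibit, in each of the six subcases, an explicit $(\vec a, \vec b)$ for which $W^c \neq 0$, contradicting the assumption.

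Since by Theorem \ref{Thm: Modularity of Witten type genus} the invariant $W^c$ is a modular form of weight $2m$ on $SL(2,\mathbb{Z})$, nonvanishing is reduced to showing that the $q^0$-coefficient of its $q$-expansion is nonzero. Setting $q = 0$ in the Chern root formula (\ref{Eqn: Witten genus Chern roots 4m}) or (\ref{Eqn: Witten genus Chern roots 4m+2}) reduces $\mathcal{W}^c$ to $\widehat{A}(TM)$ times an explicit polynomial in $c$ built from $\cosh$- and $\sinh$-factors. The central observation is the following: if one chooses $\vec b$ to have exactly $s=2m$ nonzero entries, then $\prod_j \sinh(b_j c/2)$ has lowest $c$-power equal to $c^{2m}$ with coefficient $(\prod_j b_j)/2^{2m}$, which already fills the top form-degree $4m$ on $M^{4m}$ in case (A) (and analogously combines with the extra $\sinh(c/2)$-factor coming from the second line of (\ref{Eqn: Witten genus Chern roots 4m+2}) to produce $c^{2m+1}$ in case (B)). Any higher power of $c$ in the Taylor expansion, or any nontrivial Pontryagin contribution $\widehat{A}_i$ with $i \geq 1$, would force the total form-degree to exceed $\dim M$ and hence integrate to zero. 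Thus only the pairing of $\widehat{A}_0 = 1$ with the leading $c^{2m}$-term (respectively $c^{2m+1}$) survives, and the $q^0$-coefficient of $W^c$ equals a nonzero rational multiple of $\int_M c^{2m}$ in case (A) or of $\int_M c^{2m+1}$ in case (B), which is nonzero by hypothesis.

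It remains to exhibit, in each subcase, a valid $(\vec a, \vec b)$ with $s = 2m$ and all $b_j \neq 0$. Take $\vec a = (1)^r$ and $\vec b = (1^{2m-\ell}, 3^{\ell})$ with $\ell \in \{0,1,2\}$, so that $\|\vec b\|^2 = 2m + 8\ell$. Since $b_j^2 - 1$ is divisible by $3$ unless $b_j = \pm 3$ (in which case $b_j^2 - 1 \equiv 2 \pmod 3$), the residue $\|\vec b\|^2 - 2m \equiv 2\ell \pmod 3$ is forced by the residue of $k - m$ modulo $3$: subcases (A.1), (A.2), (A.3) require $\ell = 2, 0, 1$, while (B.1), (B.2), (B.3) require $\ell = 0, 1, 2$. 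The remaining portion of the constraint then determines $r = 2(k - m - 1 - 4\ell)/3$ in case (A) or $r = 2(k - m - 4\ell)/3$ in case (B), which is a nonnegative even integer precisely under the lower bound on $k - m$ stated in each subcase (namely $9, 1, 5$ in cases (A.1--3) and $0, 4, 8$ in cases (B.1--3)). The parity requirement $\sum a_j + \sum b_j = r + 2m + 2\ell \equiv 0 \pmod 2$ then holds automatically.

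The main technical point is the isolation of $\int_M c^{2m}$ (or $\int_M c^{2m+1}$) from potentially interfering characteristic numbers $\int_M p_I \cdot c^{\bullet}$ arising from the expansion of $\widehat{A}(TM) \cdot P(c)$. This reduction succeeds precisely because $\vec b$ has $s = 2m$ nonzero entries, which forces the lowest $c$-power in the polynomial factor to coincide with half of $\dim M$ and thereby eliminates higher-order Pontryagin contributions by a pure degree count. The lower bounds on $k - m$ in the six subcases are exactly what is needed to accommodate enough entries $b_j = \pm 3$ (up to two) so that the mod-$3$ constraint is met while keeping $r \geq 0$; this is the heart of the case-by-case structure of the theorem.
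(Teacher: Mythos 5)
Your proposal is correct and follows essentially the same route as the paper: assume a positive effective action exists, invoke Theorem \ref{Thm:action}, and derive a contradiction by exhibiting an admissible pair $(\vec a,\vec b)$ with exactly $s=2m$ nonzero entries $b_j\in\{1,3\}$ so that, by a pure degree count against the odd/even parity of the factors, $W^c_{2k+1;\vec a,\vec b}(M)$ reduces to a nonzero multiple of $\int_M c^{2m}$ (resp.\ $\int_M c^{2m+1}$). The only cosmetic difference is that you solve the residual constraint by taking $\vec a=(1)^r$ with $r$ determined explicitly (and you verify the parity condition, which the paper leaves implicit), whereas the paper produces $\vec a$ via Lagrange's four-square theorem; both are valid.
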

\begin{proof}
Suppose $M=M^{4m}$, and consider the quadratic indefinite equation
\begin{equation}\label{general4mconeq}
3||\vec a||^2+b_1^2+b_2^2+\cdots +b_{2m}^2=2k-2,
\end{equation}
where we let $s=2m$. Let $\vec b=(b_1, b_2, \ldots, b_{2m})=(1,1,\ldots ,1,3,3)$ for Case $(A.1)$, $\vec b=(1,1,\ldots,1)$ for Case $(A.2)$, and $\vec b=(1,1,\ldots,1,3)$ for Case $(A.3)$ respectively.
In particular, we see that $||\vec b||^2=2m+16$, $2m$, or $2m+8$ in each case, the collection of which is a complete residue system modulo $3$. Then it is easy to check that
\[
2k-2-||\vec b||^2\equiv 0~{\rm mod}~3
\]
in each of the three cases, where the left hand side is also non-negative. Hence the equation (\ref{general4mconeq}) always has an integer solution for some $\vec a$ by Lagrange's four-square theorem.

By (\ref{Eqn: Witten genus Chern roots 4m}), we have 
\begin{equation*}
\begin{split}
&W^{c}_{2k+1; \vec a, \vec b}(M^{4m})\\
=& \int_{M^{4m}}\left(
\prod_{j=1}^{2m}z_{j}\frac{\theta ^{\prime }(0,\tau )}{\theta (z_{j},\tau )}%
\right) \frac{\theta _{1}(u,\tau )\theta _{2}(u,\tau )\theta _{3}(u,\tau )}{%
\theta _{1}(0,\tau )\theta _{2}(0,\tau )\theta _{3}(0,\tau )}\prod_{j=1}^r\frac{\theta _{1}(a_ju,\tau )\theta _{2}(a_ju,\tau )\theta _{3}(a_ju,\tau )}{%
\theta _{1}(0,\tau )\theta _{2}(0,\tau )\theta _{3}(0,\tau )}\\
&\cdot \left(\frac{\sqrt{-1}\theta (u,\tau )}{\theta _{1}(0,\tau )\theta
_{2}(0,\tau )\theta _{3}(0,\tau )}\right)^p\left(\frac{\sqrt{-1}\theta (3u,\tau )}{\theta _{1}(0,\tau )\theta
_{2}(0,\tau )\theta _{3}(0,\tau )}\right)^q,\\
\end{split}  
\end{equation*}
where $u=-\frac{\sqrt{-1}}{2\pi}c$, and $p$, $q$ are non-negative integers depending on the different cases, but alway satisfy $p+q=2m$.
Note that $\theta(\nu, \tau)$ is an odd function of $\nu$ starting from $2\pi \sqrt{-1}\nu$ in the expansion while $\theta_i(\nu, \tau)$ are all even functions of $\nu$ for $i=1,2,3$, we have  
\[ W^{c}_{2k+1; \vec a, \vec b}(M^{4m})=\int_{M^{4m}} d\cdot c^{2m}\neq 0,\]
where $d$ is some non-zero constant. Hence by Theorem \ref{Thm:action}, we see that $M^{4m}$ admits no effective positive action of a compact non-abelian Lie group that can be lifted to $\xi$. For the cases when $M=M^{4m+2}$, the proof is similar and omitted. We then obtain the theorem.
\end{proof}

Our vanishing theorem can be applied to study group actions on homotopy complex projective spaces.
Let $X$ be a closed smooth manifold homotopic to $\CC P^{2n}$. Let $x\in H^2(X; \Z)$ be a generator. Using his twisted Spin$^c$ rigidity theorem, Dessai \cite{Des99} proved the following

\begin{theorem}[\protect Dessai]\label{homotopyprojsec}
Let $X$ be a closed smooth manifold homotopic to $\CC P^{2n}$. If $p_1(X)>(2n+1)x^2$, then $X$ does not support a nontrivial smooth $S^3$ action.
\end{theorem}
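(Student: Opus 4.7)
The plan is to argue by contradiction: assuming a nontrivial smooth $S^3$-action on $X$, I will find a pair $(\vec a, \vec b)$ meeting condition (\ref{cond4m}) for which Theorem \ref{Thm:action} forces $W^c_{2k+1;\vec a,\vec b}(X) = 0$ while a direct Chern-root calculation shows $W^c_{2k+1;\vec a,\vec b}(X) \neq 0$. First I set up the Spin$^c$ data: since $X \simeq \mathbb{C}P^{2n}$ gives $H^\ast(X;\mathbb{Z}) = \mathbb{Z}[x]/(x^{2n+1})$ with $w_2(X) \equiv x \pmod 2$, there is a Spin$^c$ structure on $X$ with determinant line bundle $\xi$ satisfying $c_1(\xi) = x$. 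Writing $p_1(X) = \alpha x^2$, integrality of the Spin$^c$ class $q_1 = (p_1 - c_1(\xi)^2)/2 \in H^4(X;\mathbb{Z})$ recalled in Section \ref{strongstringcsec} forces $\alpha$ to be odd, so the hypothesis $p_1(X) > (2n+1) x^2$ gives $\alpha \geq 2n+3$. Setting $2k+1 := \alpha$ yields $k \geq n+1$ and exhibits $X$ as a level $(2k+1)$ String$^c$ manifold.

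Next I check the hypotheses of Theorem \ref{Thm:action}. Since $S^3$ is simply connected the $S^3$-action lifts to the Spin$^c$ structure and to $\xi$. Because $Pin(2) \subset SU(2) = S^3$ and, as recalled in Remark \ref{pin2}(ii), every smooth $Pin(2)$-action on a closed manifold homotopy equivalent to $\mathbb{C}P^{2n}$ has a fixed point, Remark \ref{pin2}(i) shows that the $S^3$-action is positive in the sense defined before Theorem \ref{Main}. Theorem \ref{Thm:action} therefore forces $W^c_{2k+1;\vec a,\vec b}(X) = 0$ for every pair $(\vec a, \vec b)$ satisfying condition (\ref{cond4m}).

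The crux is to contradict this by a direct evaluation, following the template of the proof of Theorem \ref{genexamplethmsec}. I would take $\vec b$ of length $s = 2n$ with all entries nonzero and solve $3||\vec a||^2 + ||\vec b||^2 = 2k-2$ for some $\vec a$; the base case $k = n+1$ is immediately handled by $\vec a = \emptyset$ and $\vec b = (1,\ldots,1) \in \mathbb{Z}^{2n}$, since then $||\vec b||^2 = 2n = 2k-2$ and the parity condition $\sum a_j + \sum b_j \in 2\mathbb{Z}$ is automatic. For larger $k$, adjusting a few entries of $\vec b$ among $\pm 1, \pm 3, \pm 5, \ldots$ to arrange the correct residue of $||\vec b||^2$ modulo $3$ and then invoking Lagrange's four-square theorem on $\vec a$ produces a valid pair while preserving parity. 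Substituting $c = x$ into formula (\ref{Eqn: Witten genus Chern roots 4m}), the $s$-fold product $\prod_{j=1}^{2n} \sqrt{-1}\,\theta(b_j u, \tau)/\prod_i \theta_i(0,\tau)$ vanishes to order $u^{2n}$ at $u = 0$, and dimensional considerations in $H^\ast(X;\mathbb{Z}) = \mathbb{Z}[x]/(x^{2n+1})$ pin every other factor to its zeroth-order value. This yields $W^c_{2k+1;\vec a,\vec b}(X) = d \cdot c^{2n}[X] = d$ for an explicit nonzero modular form $d = d(\tau)$, contradicting the vanishing.

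The main obstacle I expect is precisely this last step of covering all values $k \geq n+1$ by a single recipe for $(\vec a, \vec b)$: the three choices of $\vec b$ used in Theorem \ref{genexamplethmsec} only give the stricter bounds $k - n \geq 1, 5, 9$ according to residues modulo $3$. Enlarging the admissible $\vec b$'s to allow entries beyond $\pm 1, \pm 3$ and then applying Lagrange's theorem to $\vec a$ should eliminate this gap; the verification that $d(\tau) \neq 0$ in every such case is a routine but essential theta-function calculation that closes the argument.
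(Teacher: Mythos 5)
There is a genuine gap, and it sits exactly where you flagged your ``main obstacle'': the claim that every admissible level can be covered by a suitable choice of $(\vec a,\vec b)$. From integrality of $q_1=(p_1-c_1(\xi)^2)/2$ alone you only get $p_1(X)=\alpha x^2$ with $\alpha$ odd and $\alpha\geq 2n+3$, i.e.\ $k\geq n+1$, and this is not enough. The nonvanishing argument forces $s=2n$ (so that the $2n$ odd theta factors contribute exactly $u^{2n}$ and nothing else survives in $H^{4n}(X)$), hence you must solve $3\|\vec a\|^2+\sum_{j=1}^{2n}b_j^2=2k-2$ with every $b_j\neq 0$. Since $\sum_j(b_j^2-1)$ only takes values in sums of $\{0,3,8,15,\dots\}$, the level $2k+1=2n+5$ (i.e.\ $k=n+2$, $2k-2=2n+2$) admits \emph{no} solution: $\|\vec b\|^2$ can only be $2n$ or $\geq 2n+3$, leaving $3\|\vec a\|^2=2$, which is impossible; the same failure occurs at $2k+1=2n+7$. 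So enlarging the entries of $\vec b$ does not close the gap, and your proof as written cannot produce the contradiction for all levels your weak bound allows.

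The paper's proof supplies the missing arithmetic input: by Masuda--Tsai \cite{MT85}, for a homotopy $\CC P^{2n}$ one has $p_1(X)=(2n+1+24\alpha(X))x^2$ for an integer $\alpha(X)$, so the hypothesis $p_1(X)>(2n+1)x^2$ gives $\alpha(X)>0$ and the level is $2k+1=2n+1+24\alpha(X)$, i.e.\ $k-n=12\alpha(X)\geq 12$. This puts you squarely in case $(A.1)$ of Theorem \ref{genexamplethmsec} ($k-n\equiv 0\bmod 3$ and $k-n\geq 9$), where $\vec b=(1,\dots,1,3,3)$ and Lagrange's four-square theorem for $\vec a$ do the job, and the problematic intermediate levels simply do not occur. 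The rest of your argument (lifting the $S^3$-action to $\xi$ via simple connectivity, positivity via the $Pin(2)$ fixed point from the odd Euler characteristic and Remark \ref{pin2}, and the theta-function evaluation giving $d\cdot c^{2n}[X]\neq 0$) matches the paper; you should also note, as the paper does, that Theorem \ref{Thm:action} is stated for effective actions, so one passes to a circle $S^1\subset S^3$ acting nontrivially and invokes the \emph{proof} of that theorem rather than its statement.
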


We give a proof of this theorem by using the vanishing of the generalized Witten genera. 

\begin{proof} By Masudai-Tsai \cite{MT85}, one knows that the first Pontryagin class of $X$ takes the form $p_1(X)=(2n+1+24\alpha(X))x^2$ for certain integer $\alpha(X)$. Therefore $X$ is String$^c$ of level $2n+1+24\alpha(X)$ with the underlying Spin$^c$ structure determined by $x$. By the assumption $p_1(X)>(2n+1)x^2$, we have $\alpha(X)>0$. 
And therefore, similar to the proof of Theorem \ref{genexamplethmsec}, the indefinite equation 
\[
3||\vec a||^2+b_1^2+b_2^2+\cdots +b_{2n}^2=2n-2+24\alpha(X)
\]
must have a solution $(\vec a, \vec b)$ such that all the $b_i's$ are nonzero.
It implies that $W^{c}_{2n+1+24\alpha(X); \vec a, \vec b}(X)$ is well defined, and again by similar argument as in the proof of Theorem \ref{genexamplethmsec}, it can be showed that this general Witten genus does not vanish.

Now assume that there is a nontrivial $S^3$ action on $X$. First by \cite{HY76}, this action can be be lifted to the determinant line bundle determined by $x$. Also by Lemma $3.8$ of \cite{Des99}, since $X$ has odd Euler characteristic, the induced action of the subgroup $Pin(2)$ of $S^3$ on $X$ has a fixed point. Hence from Remark \ref{pin2}, we see that the $S^3$ action is positive. 
In addition, from the fact that $S^3$ is covered by the conjugate classes of its maximal torus $T\cong S^1$, it follows that there exists a subgroup $S^1$ of $S^3$ which acts nontrivially on $X$. 
Consequently by the proof of Theorem \ref{Thm:action}, we see that $W^{c}_{2n+1+24\alpha(X); \vec a, \vec b}(X)$ must vanish, which is a contradiction. 
\end{proof}


\appendix
\section{Basics on homotopy fibre sequences}
\label{AppendixA}

\noindent For any pointed map $f: X\rightarrow Y$, there is a canonical way to turn it into a fibration with a homotopy fibre $F$
\[
F\stackrel{i}{\rightarrow} X\stackrel{f}{\rightarrow} Y.
\]
Continue the process for the leftmost maps, we then obtain the so-called \textit{Puppe sequence} of $f$ (e.g. See Chapter $2$ of \cite{Switzer75}) 
\[
\cdots\stackrel{\Omega j}{\rightarrow} \Omega F\stackrel{\Omega i}{\rightarrow} \Omega X\stackrel{\Omega f}{\rightarrow} \Omega Y\stackrel{j}{\rightarrow}  F\stackrel{i}{\rightarrow} X\stackrel{f}{\rightarrow} Y, 
\]
of which any three consecutive terms give a homotopy fibration.
The following lemma is used frequently in this paper without further reference:
\begin{lemma}[Lemma $2.1$ of \cite{Cohen79}]
A homotopy commutative diagram
\[
\xymatrix{
A \ar[r] \ar[d]  & B \ar[d]\\
C \ar[r]          & D
}
\]
can be embedded in a homotopy commutative diagram
\[
\xymatrix{
Q \ar[r] \ar[d]   & J\ar[r] \ar[d]  & K \ar[d]\\ 
F\ar[r] \ar[d]   &A \ar[r] \ar[d]  & B \ar[d]\\
G\ar[r]            &C \ar[r]          & D,
}
\]
in which the rows and columns are fibration sequences up to homotopy.
\end{lemma}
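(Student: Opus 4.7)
The plan is to build the $3 \times 3$ diagram via iterated homotopy fibres of the given square. My first step would be to replace every map in the square by a strict Serre fibration using the standard path-space construction $f : X \to Y \leadsto E_f \twoheadrightarrow Y$ where $E_f = \{(x,\gamma) \in X \times Y^I : \gamma(0) = f(x)\}$, done coherently so that the whole square becomes strictly commutative and strict pullbacks realize homotopy pullbacks.

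Next, fix a basepoint $a_0\in A$ and set $\ast_B=g(a_0)$, $\ast_C=h(a_0)$, $\ast_D=k(\ast_B)=l(\ast_C)$ (the last equality uses commutativity). Define
\[
F=g^{-1}(\ast_B),\quad G=l^{-1}(\ast_D),\quad J=h^{-1}(\ast_C),\quad K=k^{-1}(\ast_D),
\]
so the fibration sequences $F\to A\to B$, $J\to A\to C$, $G\to C\to D$, $K\to B\to D$ (the middle row, middle column, bottom row, right column of the intended diagram) are immediate. Commutativity produces induced maps $h|_F:F\to G$ and $g|_J:J\to K$. Finally let
\[
Q=g^{-1}(\ast_B)\cap h^{-1}(\ast_C)=(h|_F)^{-1}(\ast_C)=(g|_J)^{-1}(\ast_B).
\]
Then the top row $Q\to J\to K$ and the left column $Q\to F\to G$ of the diagram are the candidates for the two remaining fibration sequences.

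The conceptual content of the proof is to verify that both of these are genuine homotopy fibration sequences. I would do this by identifying $Q$ as the \emph{total homotopy fibre} of the square,
\[
Q \simeq \mathrm{hofib}\bigl(A \to B \times^h_D C\bigr),
\]
where $B \times^h_D C$ denotes the homotopy pullback. Viewing the canonical comparison map $A \to B \times^h_D C$ as a map over $B$, the fibre of the target over $\ast_B$ is (a model of) $G$, while the fibre of the source is $F$, and the induced map on $\ast_B$-fibres is precisely $h|_F$; by the naturality of homotopy fibres, its homotopy fibre agrees with $Q$. Applied over $C$ instead, the same argument identifies $Q$ with $\mathrm{hofib}(g|_J : J \to K)$.

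The main technical obstacle is precisely this last identification, namely the symmetry of the total homotopy fibre under swapping the two iteration orders (sometimes called Mather's cube lemma). A direct route is to compare the long exact homotopy sequences of the four ambient fibration sequences and apply Whitehead's theorem; a cleaner route is a model-categorical argument using properness of the Serre structure on topological spaces. Once this symmetry is in place, the six fibration sequences of the $3\times 3$ diagram assemble from the pieces described above, and the induced map $Q \to F$ (respectively $Q \to J$) into the already-fixed rows and columns commutes with everything in sight by naturality of the total-fibre construction.
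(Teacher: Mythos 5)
The paper offers no proof of this lemma at all --- it is quoted verbatim as Lemma~2.1 of \cite{Cohen79} --- so there is nothing internal to compare against; your fibre-of-fibres architecture is the standard argument and is essentially the one in that reference. That said, two steps in your write-up need repair. First, the rectification is where the real content lives, and ``replace every map in the square by a fibration, coherently'' is not yet a construction: turning $g,h,k,l$ into fibrations one at a time does not make $h|_F\colon F\to G$ or $g|_J\colon J\to K$ into fibrations, which is what you ultimately need. The object that must be made a fibration is the comparison map $(g,h)\colon A\to B\times_D C$; concretely, choose a homotopy $H\colon k\circ g\simeq l\circ h$ (the whole construction depends on this choice), replace $k$ and $l$ by fibrations so the strict pullback models the homotopy pullback, and then replace $(g,h)$ by a fibration. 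Once that is done, $g$ and $h$ are fibrations (composites of $(g,h)$ with the pullback projections), $h|_F$ is the restriction of $(g,h)$ over $\{\ast_B\}\times G\subseteq B\times_D C$ and hence an honest fibration with strict fibre $Q=(g,h)^{-1}(\ast_B,\ast_C)$, and symmetrically for $g|_J$. In particular the ``symmetry of the total homotopy fibre'' that you single out as the main technical obstacle is then not an extra input at all: $Q$ is literally the strict fibre of two fibrations, so both $Q\to F\to G$ and $Q\to J\to K$ are fibration sequences on the nose. Second, your fallback for that symmetry --- ``compare the long exact homotopy sequences and apply Whitehead'' --- does not work as stated: there is no long exact sequence for $Q\to F\to G$ until you already know it is a fibration sequence, and Whitehead's theorem requires an actual map inducing the isomorphisms, not abstractly isomorphic groups. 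Either carry out the rectification as above, or invoke the commutation of iterated homotopy fibres directly (Mather's cube argument; see also the cubical homotopy theory text \cite{MV15} already cited in this paper).
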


\section{Cohomology suspension and transgression}\label{AppendixB}
\noindent In cohomology theory there are two classical kinds of suspensions (e.g., see Section $1.3$ of \cite{Harper02}): \textit{Mayer-Vietoris suspension (MV-suspension)}
\begin{equation}\label{mvsuspen}
\Delta^\ast: \bar{H}^n(X)\rightarrow H^{n+1}(\Sigma X),
\end{equation}
and \textit{cohomology suspension}
\begin{equation}\label{cohosuspen}
\sigma^\ast: H^{n+1}(X)\rightarrow H^{n}(\Omega X).
\end{equation}
The MV-suspension $\Delta^\ast$ is also known as part of the axioms of general reduced cohomology theories and is always an isomorphism. The cohomology suspension then does not hold in general, and can be defined as 
\begin{equation}\label{defsigma1}
\sigma^\ast: H^{n+1}(X)\stackrel{p}{\rightarrow} H^{n+1}(PX, \Omega X) \xleftarrow[\cong]{\delta} H^n(\Omega X),
\end{equation}
where $p: (PX, \Omega X) \rightarrow (X,\ast)$ is the canonical path fibration, $\delta$ is the connecting homomorphism in the long exact sequence of the cohomology of the pair $(PX, \Omega X)$.

There are other two useful alternative descriptions. Firstly we may identify cohomology groups with groups of homotopy classes of maps into Eilenberg-Maclane spaces via the Brown representability theorem
\begin{equation}\label{brownrep}
\bar{H}^n(X;\mathbb{Z})\cong [X, K(\mathbb{Z}, n)].
\end{equation}
Then the $MV$-suspension is just to take the adjoint map and the cohomology suspension is to take the loop functor 
\begin{equation}\label{defsigma2}
\Omega: [X, K(\mathbb{Z}, n+1)]\rightarrow [\Omega X, K(\mathbb{Z}, n)].
\end{equation}

We may also define the cohomology suspension via the evaluation map 
\begin{equation}\label{evloop}
{\rm ev}: S^1\times \Omega X\rightarrow X
\end{equation}
defined by ${\rm ev}((t, \omega))=\omega(1)$. In this case, $\sigma^\ast$ is a slant-product by the fundamental class $[S^1]$ of $S^1$
\begin{equation}\label{defsigma3}
{\rm ev}^\ast(x)=s_1\otimes \sigma^\ast(x).
\end{equation}

Both the MV-suspension and the cohomology suspension are natural and have a useful connection, that is, 
\begin{equation}\label{conn2suspen}
\Delta\circ \sigma^\ast= \bar{{\rm ev}}^\ast: H^{n+1}(X)\rightarrow H^{n+1}(\Sigma \Omega X),
\end{equation}
where $\bar{{\rm ev}}: \Sigma \Omega X\rightarrow X$ is the (reduced) evaluation map. In particular, $\sigma^\ast$ is trivial on decomposable elements since the ring structure of the cohomology of a suspension is trivial.

We should be careful to use cohomology suspension when $n=0$ or $X$ is not simply connected. In these cases, we may define the \textit{$k$-th component cohomology suspension} of $\sigma^\ast$ by 
\begin{equation}\label{defkcohosuspen}
\sigma_k^\ast: H^{n+1}(X)\stackrel{\sigma^\ast}{\rightarrow} H^n(\Omega X) \stackrel{(i_k)^\ast}{\rightarrow} H^n(\Omega_k X),
\end{equation} 
where $i_k: \Omega_k X\hookrightarrow \Omega X$ is the inclusion of the $k$-th component of $\Omega X$ for $k\in \pi_0(\Omega X)$.
The other two equivalent definitions of $\sigma^\ast_k$ can be easily obtained from (\ref{defsigma2}) and (\ref{defsigma3}).

\begin{example}\label{exsuspens1}
Let us compute 
\[
\sigma^\ast: H^1(S^1)\rightarrow H^0(\Omega S^1),
\]
which is equivalent to 
\[
\Omega: [S^1 , S^1]\rightarrow \langle \Omega S^1, \Omega S^1\rangle,
\]
where $\langle -, - \rangle$ denotes the set of homotopy classes of free maps. We notice that there are group isomorphisms
\[\langle\Omega S^1, \Omega S^1\rangle\cong {\rm Func} (\mathbb{Z},\mathbb{Z})\cong \prod_{k\in \mathbb{Z}} {\rm Func} (k,\mathbb{Z}),\]
where ${\rm Func}(-,-)$ denotes the set of functions and the group structure of $\prod_{k\in \mathbb{Z}} {\rm Func} (k,\mathbb{Z})$ is defined pointwise and inherited from the targets $\mathbb{Z}$. Further combining with the Brown representability theorem, $H^0(\Omega_k(S^1))$ corresponds exactly to ${\rm Func} (k,\mathbb{Z})$.
Since $\Omega({\rm id})={\rm id}$ corresponds to $\prod_{k\in \mathbb{Z}}(\lambda_k: k\mapsto k)$, we see that 
\[
\sigma_k^\ast(s_1)=k.
\]
\end{example}

The cohomology suspension has a ``partial'' inverse, known as cohomology transgression (e.g. see Section $6.2$ of \cite{Mccl01} or Section \MyRoman{13}$.7$ of \cite{Whitehead78}). For simplicity let us introduce it directly by the Serre spectral sequence $(E_r^{\ast,\ast}, d_r)$ of any given orientable fibration $F \stackrel{i}{\rightarrow} E \stackrel{p}{\rightarrow} B$.
\begin{definition}\label{transdef}
The \textit{cohomology transgression} is the differential homomorphism 
\begin{equation}\label{transdefmor}
d_n: E_{n}^{0,n-1}\rightarrow E_{n}^{n,0}
\end{equation}
for each $n\geq 2$.
\end{definition}
The cohomology transgression fits into following commutative diagram 
\begin{gather}
\begin{aligned}
\xymatrix{
H^{n-1}(E) \ar[r]^{i^\ast}  \ar@{->>}[d]    & H^{n-1}(F) \ar[r]^{\delta}    & H^{n}(E, F)\ar[r]^{j^\ast}   & H^{n}(E) \\
E_{\infty}^{0,n-1}\cong E_{n+1}^{0,n-1} \ar@{^{(}->}[r]    &  E_{n}^{0,n-1}\ar[r]^{d_n}  \ar@{^{(}->}[u]  & E_{n}^{n,0}   \ar@{^{(}->}[u] 
\ar@{->>}[r]  &  E_{n+1}^{n,0}\cong  E_{\infty}^{n,0}  \ar@{^{(}->}[u]\\
&& H^{n}(B,\ast) \ar@{->>}[u]  \ar@/_1.5pc/[uu]_>(.8){ p^\ast} \ar[r]^{j^\ast}  & H^n(B),  \ar@{->>}[u]   \ar@/_2pc/[uu]_>(.8){ p^\ast}
}
\end{aligned}
\label{transdiag}
\end{gather}
where the first line is part of the long exact sequence of the cohomology of the pair $(E, F)$, and the second row is exact by the definition of $d_n$. Then it is easy to show that $d_n$ can be described as a homomorphism 
\begin{equation}\label{transdef0}
\tau: H^{n-1}(F)\supseteq \delta^{-1}({\rm Im}~p^\ast) \rightarrow H^{n}(B)/ j^\ast({\rm Ker}~p^\ast).
\end{equation}
To consider the connection to cohomology suspension, we specify the above argument to the loop fibration $\Omega X\rightarrow PX\rightarrow X$. In this case both $d_n$ and $\delta$ are isomorphisms and the composition $\delta^{-1}\circ p^\ast: H^{n}(X)\rightarrow H^{n-1}(\Omega X)$ is exactly the cohomology suspension $\sigma^\ast$ by definition. Hence we see that $\tau$ is a partial inverse of $\sigma^\ast$.

\section{Blakers-Massey type theorems}\label{AppendixC}
\begin{definition}\label{condef}
Let $f: X\rightarrow Y$ be a pointed map between pointed spaces $X$ and $Y$. Then 
$f$ is \textit{$n$-connected} if it induces isomorphisms on $k$-dimensional homotopy groups for $k<n$ and an epimorphism for $k=n$. The space $X$ is $m$-connected if $\pi_i(X)=0$ for any $i\leq m$. We use the convention that any space is $(-1)$-connected.
\end{definition}
It is then easy to check that $f$ is $n$-connected is equivalent to any of the following:
\begin{itemize} 
\item[(1)] the homotopy fibre of $f$ is $(n-1)$-connected;
\item[(1)] the homotopy cofibre of $f$ is $n$-connected;
\item[(2)] $f_\ast: H_i(X;\mathbb{Z})\rightarrow H_i(Y;\mathbb{Z})$ is an isomorphism for each $k<n$ and an epimorphism for $k=n$; 
\item[(3)] $f^\ast: H^i(Y;\mathbb{Z})\rightarrow H^i(X;\mathbb{Z})$ is an isomorphism for each $k<n$ and a monomorphism for $k=n$.
\end{itemize}

\begin{theorem}[An elegant form of Blakers-Massey Theorem; e.g., see Theorem $4.2.1$ \cite{MV15}]\label{BM}
Let 
\[
\xymatrix{
B \ar[r]^{f}  \ar[d]^{g}   & A  \ar[d]^{h}   \\
C \ar[r]^{k}                  & X    \pushoutcorner
}
\]
be a homotopy pushout diagram. 
Let 
\[
\xymatrix{
Y   \pullbackcorner \ar[r]  \ar[d]   & A  \ar[d]^{h}   \\
C \ar[r]^{k}                  & X  
}
\]
be the homotopy pullback diagram defining $Y$. Suppose $f$ is $m$-connected and $g$ is $n$-connected. Then the induced map $B\rightarrow Y$ is $(m+n-1)$-connected.
\end{theorem}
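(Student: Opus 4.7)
The plan is to deduce this elegant form from the classical Blakers--Massey triad theorem via a total-homotopy-fibre argument. A map $\varphi\colon B\to Y$ is $(m+n-1)$-connected if and only if its homotopy fibre is $(m+n-2)$-connected. When $Y$ is the homotopy pullback of $h$ and $k$ and $\varphi$ is the canonical comparison map, this fibre is naturally the \emph{total homotopy fibre} of the square, and it may be computed as the homotopy fibre of either of the two induced comparison maps
\begin{equation*}
\mathrm{hofib}(f)\longrightarrow \mathrm{hofib}(k)\quad\text{or}\quad \mathrm{hofib}(g)\longrightarrow \mathrm{hofib}(h).
\end{equation*}
Hence it suffices to prove that one of these comparisons is itself $(m+n-1)$-connected.

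Next I would replace the span by a strictly cofibrant model. Factor $f$ and $g$ through their mapping cylinders and then CW-approximate so that $A$ and $C$ are CW complexes containing $B$ as a common subcomplex, with $X=A\cup_B C$; this does not alter the homotopy type of the pushout. Under this model, the hypotheses translate into the pair statements $(A,B)$ is $m$-connected and $(C,B)$ is $n$-connected, and the comparison $\mathrm{hofib}(g)\to \mathrm{hofib}(h)$ corresponds, after the standard identification of $\pi_r(-,-)$ with $\pi_{r-1}$ of the homotopy fibre, to the inclusion-induced excision homomorphism $\pi_r(C,B)\to \pi_r(X,A)$.

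At this point I would invoke the classical Blakers--Massey triad theorem for the CW triad $(X;A,C)$, which asserts exactly that the above homomorphism is bijective for $r<m+n$ and surjective for $r=m+n$. Reinterpreted in terms of homotopy fibres, this says the comparison map is $(m+n-1)$-connected, so its fibre (the total homotopy fibre of the square, which is the homotopy fibre of $\varphi$) is $(m+n-2)$-connected, as desired.

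The main obstacle is the classical triad theorem itself, which is genuinely non-formal; I would simply invoke it as a black box, as this is precisely the content of the reference MV15 cited in the statement. A secondary but unavoidable piece of bookkeeping is verifying that the canonical equivalence $\mathrm{hofib}(Y\to A)\simeq \mathrm{hofib}(C\to X)$ and the comparison map $B\to Y$ assemble into the long exact sequence in such a way that $(m+n-1)$-connectivity of the fibre comparison really yields $(m+n-1)$-connectivity of $\varphi$. This is a routine diagram chase via the universal property of the homotopy pullback, and the principal danger is an off-by-one error in the connectivity indexing.
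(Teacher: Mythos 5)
The paper states Theorem \ref{BM} without proof, quoting it directly from Munson--Voli\'c \cite{MV15}, so there is no internal argument to compare yours against; what you have supplied is the standard derivation of the ``square'' form from the classical triad form of Blakers--Massey, and it is correct. Your chain of reductions is sound: the comparison map $B\to Y$ has the total homotopy fibre of the square as its homotopy fibre, the total fibre is computed iteratively as $\mathrm{hofib}\bigl(\mathrm{hofib}(g)\to\mathrm{hofib}(h)\bigr)$, and after passing to a cofibrant model $X=A\cup_B C$ the induced map on $\pi_{r-1}$ of these fibres is the excision homomorphism $\pi_r(C,B)\to\pi_r(X,A)$, whose range of bijectivity/surjectivity in the triad theorem translates exactly into $(m+n-1)$-connectivity of the fibre comparison and hence of $B\to Y$; the indexing checks out against the paper's convention that an $n$-connected map has $(n-1)$-connected homotopy fibres. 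Two small points of bookkeeping that you flag but should make explicit if writing this up: (i) the equivalence ``$\varphi$ is $j$-connected iff its homotopy fibre is $(j-1)$-connected'' must be read over \emph{all} basepoints of $Y$ (or all components), which is where surjectivity on $\pi_0$ is absorbed, and the triad theorem's own low-degree hypotheses (connected intersection, $m,n\geq 0$) need to be reconciled with the paper's convention that every space is $(-1)$-connected --- for $m$ or $n$ equal to $-1$ the conclusion is vacuous, so nothing is lost; (ii) the identification of $\mathrm{hofib}(B\to Y)$ with the iterated fibre is a genuine lemma about homotopy limits of punctured squares, not merely a diagram chase, though it is standard and appears in \cite{MV15}. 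Neither point is a gap in substance.
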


\begin{theorem}[Dual Blakers-Massey Theorem of fibrations; a folklore theorem for homotopy theorists]\label{dualBM}
Let 
\[
F\rightarrow E \stackrel{p}{\rightarrow} B
\]
be a fibration with the base $B$ and the total space $E$ path connected. Assume that $B$ is $m$-connected and $F$ is $n$-connected. Then there exists a partial long exact sequence 
\begin{eqnarray*}
&&0\rightarrow H^{0}(B)\rightarrow H^{0}(E)\rightarrow H^{0}(F)\rightarrow H^{1}(B)\rightarrow \cdots \\
&& \ \ \ \ \ \ \ \ \ \ \ \ \ \ \ \ \cdots\rightarrow H^{m+n}(F)\rightarrow H^{m+n+1}(B) \rightarrow H^{m+n+1}(E)\rightarrow H^{m+n+1}(F);
\end{eqnarray*}
in other word, the fibration is a cofibration up to degree $(m+n+1)$.
\end{theorem}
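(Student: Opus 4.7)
The plan is to run the Serre spectral sequence of the fibration and exploit the heavy vanishing that the connectivity hypotheses force on the $E_2$-page. Assume first that $m\geq 1$ so that $B$ is simply connected and the spectral sequence has untwisted coefficients; the case $m=0$ goes through identically because the only cells that matter lie on the coordinate axes, where local coefficients reduce to ordinary integer cohomology.

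Step 1 (vanishing region). By Hurewicz, $H^p(B)=0$ for $0<p\leq m$ and $H^q(F)=0$ for $0<q\leq n$, so on the $E_2$-page $E_2^{p,q}=H^p(B;H^q(F))$ every class with $0<p\leq m$ or $0<q\leq n$ vanishes. In particular, in the strip of total degree $p+q\leq m+n+1$ the page is concentrated on the bottom row $q=0$ and the left column $p=0$, since the first lattice point of the "interior" block $\{p\geq m+1,\,q\geq n+1\}$ has total degree $m+n+2$.

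Step 2 (identifying transgressions). For $k\leq m+n+1$ I would track the incoming and outgoing differentials of the axis cells $E_r^{0,k-1}$ and $E_r^{k,0}$. The connectivity-induced vanishing shows that no earlier differential can touch either cell, so
\begin{equation*}
E_k^{k,0}=H^k(B),\qquad E_k^{0,k-1}=H^{k-1}(F),
\end{equation*}
and the only potentially non-trivial differential affecting them is the transgression $d_k:E_k^{0,k-1}\to E_k^{k,0}$, which therefore defines an honest homomorphism $\tau_k:H^{k-1}(F)\to H^k(B)$. A similar count for $k\leq m+n$ gives
\begin{equation*}
E_\infty^{k,0}=\operatorname{coker}\bigl(\tau_k\bigr),\qquad E_\infty^{0,k}=\ker\bigl(\tau_{k+1}\bigr).
\end{equation*}

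Step 3 (splicing). Again in the range $k\leq m+n+1$, the only non-zero $E_\infty$-entries of total degree $k$ lie on the two axes, so the filtration on $H^k(E)$ gives a short exact sequence
\begin{equation*}
0\longrightarrow E_\infty^{k,0}\stackrel{p^*}{\longrightarrow}H^k(E)\stackrel{i^*}{\longrightarrow}E_\infty^{0,k}\longrightarrow 0,
\end{equation*}
with the outer arrows being the edge homomorphisms. Combining these short exact sequences with the transgressions $\tau_k$ and using $\ker p^*=\mathrm{im}\,\tau_k$ and $\mathrm{im}\,i^*=\ker\tau_{k+1}$ produces the asserted long exact sequence through degree $m+n+1$. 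At the top degree one only needs exactness at $H^{m+n+1}(B)$ (which follows from $E_\infty^{m+n+1,0}=\operatorname{coker}\tau_{m+n+1}$) and at $H^{m+n+1}(E)$ (exactness of the edge-map short exact sequence); no assertion is made about surjectivity of the final arrow $H^{m+n+1}(E)\to H^{m+n+1}(F)$.

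The main technical point will be the careful bookkeeping of which spectral sequence differentials can affect the axis entries in the relevant range. The only subtle case is the top degree: the cell $E_r^{0,m+n+1}$ can already receive or emit a differential $d_{m+1}$ from or into the interior block $\{p\geq m+1,\,q\geq n+1\}$, so $E_\infty^{0,m+n+1}$ is not cleanly $\ker\tau_{m+n+2}$. This is precisely the reason the statement terminates at $H^{m+n+1}(F)$ rather than continuing to $H^{m+n+2}(B)$, and once one recognises this, the obstacle evaporates.
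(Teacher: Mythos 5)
Your argument is correct, but it is not the route the paper takes. The paper deduces the statement from the Blakers--Massey theorem (stated just before it): it forms the homotopy cofibre $X=E/F$ of $i\colon F\to E$, applies Blakers--Massey to the defining pushout square to conclude that the comparison map $F\to\mathrm{hofib}(E\to X)$ is $(m+n)$-connected, upgrades this via a section argument to the statement that the induced map $X\to B$ is $(m+n+2)$-connected, and then reads off the exact sequence from the cofibration $F\to E\to X$ after replacing $H^{k}(X)$ by $H^{k}(B)$ in the relevant range. Your proof is instead the classical ``Serre exact sequence'' argument via the spectral sequence; the bookkeeping in Steps 1--3 is right, and your diagnosis of why the sequence must terminate at $H^{m+n+1}(F)$ --- the differential $d_{m+1}\colon E_{m+1}^{0,m+n+1}\to E_{m+1}^{m+1,n+1}$ landing in the interior block --- is exactly the correct obstruction (though the left-hand column can only \emph{emit} differentials, never receive them, so drop ``receive''). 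As for what each approach buys: the paper's argument is spectral-sequence-free, exhibits the middle map as induced by an actual map of spaces, and motivates the name ``dual Blakers--Massey''; yours is better adapted to how the theorem is used in the body of the paper, since it identifies the connecting homomorphism $H^{k-1}(F)\to H^{k}(B)$ with the transgression $d_{k}$ of Appendix B on the nose, an identification the paper's proof would need a separate compatibility check to recover. One point to tighten: for $m=0$ (or $m=-1$, which the paper's convention permits) the base is not simply connected, and the left-column entries $E_{2}^{0,q}=H^{0}(B;\mathcal{H}^{q}(F))$ are the $\pi_1(B)$-invariants of $H^{q}(F)$, not $H^{q}(F)$ itself; this is harmless only because in the range $q\le m+n$ these groups are either $0$ or $H^{0}(F)=\mathbb{Z}$ with necessarily trivial action, while in the top degree one needs only the injection $E_\infty^{0,m+n+1}\hookrightarrow H^{m+n+1}(F)$ to get exactness at $H^{m+n+1}(E)$ --- so say that, rather than asserting that the local system is automatically trivial along the axes.
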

\begin{proof}
Let us define a homotopy commutative diagram of fibration  
\begin{gather}
\begin{aligned}
\xymatrix{
Y \ar[d]^{h}  \ar@{=}[r]  & Y \ar[d]^{\rho} \ar[r]      & \ast \ar[d]\\
F  \ar[d]^{0} \ar[r]^{i}   &  E   \ar[d]^{j}  \ar[r]^{p}    &  B  \ar@{=}[d]  \\
Z \ar[r]                        &   X   \ar[r]^{f}                   &   B,
}
\end{aligned}
\label{pfdualbmdia1}
\end{gather}
where $X$ is the homotopy cofibre of $i$, $Y$ and $Z$ is the homotopy fibre of $j$ and $f$ respectively. In order to construct the exact sequence of the theorem, we only need to estimate the connectivity of the map $f$, which is equivalent to that of the space $Z$.

We then apply Theorem \ref{BM} to the homotopy pushout and homotopy pullback diagrams
\[
\xymatrix{
F \ar[r]^{i}  \ar[d]  &  E \ar[d]    &&   Y  \pullbackcorner \ar[r]^{\rho} \ar[d] &  E \ar[d]   \\
\ast \ar[r]             &  X \pushoutcorner,              &&    \ast \ar[r]       & X ,
}
\]
to conclude that the induced map $g: F\rightarrow Y$ is $(m+n)$-connected (since $F\rightarrow \ast$ is $(n+1)$-connected and $i$ is $m$-connected). But we need to choose a nice $g$. Indeed, we may apply the functor $[F, -]$ to Diagram \ref{pfdualbmdia1} to get a commutative diagram of exact sequences of pointed sets
\begin{gather}
\begin{aligned}
\xymatrix{
[F, Y] \ar[d]^{h_\ast}  \ar@{=}[r]  & [F, Y] \ar[d]^{\rho_\ast} \ar[r]      &\ast \ar[d]\\
[F, F]  \ar[d]^{0} \ar[r]^{i_\ast}   &  [F, E]   \ar[d]^{j_\ast}  \ar[r]^{p_\ast}    &  [F, B]  \ar@{=}[d]  \\
[F, Z] \ar[r]                        &   [F, X]   \ar[r]^{f}                   &   [F, B].
}
\end{aligned}
\label{pfdualbmdia2}
\end{gather}
Then there exists a map $g: F\rightarrow Y$ such that $h\circ g= id$ and $i= i_\ast h_\ast(g)=\rho_\ast(g)=\rho\circ g$. This nice $g$ as a section of $h$ splits the long exact sequence of the homotopy groups of the fibration $h$ to direct sums
\[
\pi_i(Y)\cong \pi_i(\Omega Z)\oplus \pi_i(F).
\] 
Then $g_\ast: \pi_i(F)\rightarrow \pi_i(Y)$ is indeed an isomorphism for each $i\leq m+n$. Hence, $\Omega Z$ is $(m+n)$-connected. We should also notice that $Z$ is $0$-connected due to the commutative diagram of exact sequences
\[
\xymatrix{
\pi_1(B)   \ar[r]  \ar@{=}[d]    & \pi_0(F)  \ar[d]^{0} \ar[r]^{i_\ast}   &  \pi_0(E)=0   \ar[d]^{j_\ast}  \ar[r]^{p_\ast}    &  \pi_0(B ) \ar@{=}[d]  \\
\pi_1(B)   \ar[r]^{0}                      & \pi_0(Z) \ar[r]                        &   \pi_0(X)=0   \ar[r]^{f_\ast}                   &   \pi_0(B)=0.
}
\]
Combining the above two facts together, we see that $Z$ is $(m+n+1)$-connected, which implies that $f: X\rightarrow B$ is $(m+n+2)$-connected. Then the long exact sequence of the cohomology of the cofibration $F\stackrel{i}{\rightarrow}E \stackrel{j}{\rightarrow} X$ gives us the desired exact sequence in the theorem.
\end{proof}

\section{The Jacobi theta functions}\label{AppendixD}

A general reference for this appendix is \cite{Ch85}.

Let $$ SL_2(\mathbb{Z}):= \left\{\left.\left(\begin{array}{cc}
                                      a&b\\
                                      c&d
                                     \end{array}\right)\right|a,b,c,d\in\mathbb{Z},\ ad-bc=1
                                     \right\}
                                     $$
 as usual be the modular group. Let
$$S=\left(\begin{array}{cc}
      0&-1\\
      1&0
\end{array}\right), \ \ \  T=\left(\begin{array}{cc}
      1&1\\
      0&1
\end{array}\right)$$
be the two generators of $ SL_2(\mathbb{Z})$. Their actions on
$\mathbb{H}$ are given by
$$ S:\tau\rightarrow-\frac{1}{\tau}, \ \ \ T:\tau\rightarrow\tau+1.$$

Let
$$ \Gamma_0(2)=\left\{\left.\left(\begin{array}{cc}
a&b\\
c&d
\end{array}\right)\in SL_2(\mathbb{Z})\right|c\equiv0\ \ (\rm mod \ \ 2)\right\},$$

$$ \Gamma^0(2)=\left\{\left.\left(\begin{array}{cc}
a&b\\
c&d
\end{array}\right)\in SL_2(\mathbb{Z})\right|b\equiv0\ \ (\rm mod \ \ 2)\right\}$$

$$ \Gamma_\theta=\left\{\left.\left(\begin{array}{cc}
a&b\\
c&d
\end{array}\right)\in SL_2(\mathbb{Z})\right|\left(\begin{array}{cc}
a&b\\
c&d
\end{array}\right)\equiv\left(\begin{array}{cc}
1&0\\
0&1
\end{array}\right) \mathrm{or} \left(\begin{array}{cc}
0&1\\
1&0
\end{array}\right)\ \ (\rm mod \ \ 2)\right\}$$
be the three modular subgroups of $SL_2(\mathbb{Z})$. It is known
that the generators of $\Gamma_0(2)$ are $T,ST^2ST$, the generators
of $\Gamma^0(2)$ are $STS,T^2STS$  and the generators of
$\Gamma_\theta$ are $S$, $T^2$. (cf. \cite{Ch85}).

The four Jacobi theta-functions (c.f. \cite{Ch85}) defined by
infinite multiplications are

\be \theta(v,\tau)=2q^{1/8}\sin(\pi v)\prod_{j=1}^\infty[(1-q^j)(1-e^{2\pi \sqrt{-1}v}q^j)(1-e^{-2\pi
\sqrt{-1}v}q^j)], \ee
\be \theta_1(v,\tau)=2q^{1/8}\cos(\pi v)\prod_{j=1}^\infty[(1-q^j)(1+e^{2\pi \sqrt{-1}v}q^j)(1+e^{-2\pi
\sqrt{-1}v}q^j)], \ee
 \be \theta_2(v,\tau)=\prod_{j=1}^\infty[(1-q^j)(1-e^{2\pi \sqrt{-1}v}q^{j-1/2})(1-e^{-2\pi
\sqrt{-1}v}q^{j-1/2})], \ee
\be \theta_3(v,\tau)=\prod_{j=1}^\infty[(1-q^j)(1+e^{2\pi
\sqrt{-1}v}q^{j-1/2})(1+e^{-2\pi \sqrt{-1}v}q^{j-1/2})], \ee where
$q=e^{2\pi \sqrt{-1}\tau}, \tau\in \mathbb{H}$.

They are all holomorphic functions for $(v,\tau)\in \mathbb{C \times
H}$, where $\mathbb{C}$ is the complex plane and $\mathbb{H}$ is the
upper half plane.

Let $\theta^{'}(0,\tau)=\frac{\partial}{\partial
v}\theta(v,\tau)|_{v=0}$. The {\em Jacobi identity} \cite{Ch85},
$$\theta^{'}(0,\tau)=\pi \theta_1(0,\tau)
\theta_2(0,\tau)\theta_3(0,\tau)$$ holds.

The theta functions satisfy the the following
transformation laws (cf. \cite{Ch85}), 
\be 
\theta(v,\tau+1)=e^{\pi \sqrt{-1}\over 4}\theta(v,\tau),\ \ \
\theta\left(v,-{1}/{\tau}\right)={1\over\sqrt{-1}}\left({\tau\over
\sqrt{-1}}\right)^{1/2} e^{\pi\sqrt{-1}\tau v^2}\theta\left(\tau
v,\tau\right)\ ;\ee 
\be \theta_1(v,\tau+1)=e^{\pi \sqrt{-1}\over
4}\theta_1(v,\tau),\ \ \
\theta_1\left(v,-{1}/{\tau}\right)=\left({\tau\over
\sqrt{-1}}\right)^{1/2} e^{\pi\sqrt{-1}\tau v^2}\theta_2(\tau
v,\tau)\ ;\ee 
\be\theta_2(v,\tau+1)=\theta_3(v,\tau),\ \ \
\theta_2\left(v,-{1}/{\tau}\right)=\left({\tau\over
\sqrt{-1}}\right)^{1/2} e^{\pi\sqrt{-1}\tau v^2}\theta_1(\tau
v,\tau)\ ;\ee 
\be\theta_3(v,\tau+1)=\theta_2(v,\tau),\ \ \
\theta_3\left(v,-{1}/{\tau}\right)=\left({\tau\over
\sqrt{-1}}\right)^{1/2} e^{\pi\sqrt{-1}\tau v^2}\theta_3(\tau
v,\tau)\ .\ee

Let $\Gamma$ be a subgroup of $SL_2(\mathbb{Z}).$ A modular form over $\Gamma$ is a holomorphic function $f(\tau)$ on $\mathbb{H}\cup
\{\infty\}$ such that for any
 $$ g=\left(\begin{array}{cc}
             a&b\\
             c&d
             \end{array}\right)\in\Gamma\ ,$$
 the following property holds
 $$f(g\tau):=f\left(\frac{a\tau+b}{c\tau+d}\right)=\chi(g)(c\tau+d)^kf(\tau), $$
 where $\chi:\Gamma\rightarrow\mathbb{C}^*$ is a character of
 $\Gamma$ and $k$ is called the weight of $f$.

\end{document}